\pgfplotsset{compat=newest}
\numberwithin{equation}{section}
\newcommand{\be}{\begin{equation}}
\newcommand{\ee}{\end{equation}}
\newcommand{\HT}{{\mathcal{H}}_T}
\newcommand{\calS}{{\mathcal S}}
\newcommand{\diag}{\operatorname{diag}}
\newcommand{\spn}{\operatorname{span}}
\DeclareMathOperator*{\essinf}{ess\,inf}
\newtheorem{theorem}{Theorem}[section]
\newtheorem{lemma}[theorem]{Lemma}
\newtheorem{proposition}[theorem]{Proposition}
\newtheorem{corollary}[theorem]{Corollary}
\newtheorem{remark}[theorem]{Remark}
\newcommand{\cH}{\mathcal H}
\newcommand{\cL}{\mathcal L}
\newcommand{\IN}{\mathbb N}
\newcommand{\IR}{\mathbb R}
\newcommand{\domain}{\Omega}
\newcommand{\abs}[1]{\left\lvert{#1}\right\rvert}
\newcommand{\norm}[1]{{\left\lVert{#1}\right\rVert}}
\newcommand{\spf}[2]{{\left\langle{#1},{#2}\right\rangle}}
\definecolor{dgreen}{rgb}{0,.6,0}
\title[Wavelet Compression of $\HT$]{
  Wavelet compressed, modified Hilbert transform 
  in the space-time discretization of the heat equation}
\author{Helmut Harbrecht}
\address{H.~Harbrecht,
Department of Mathematics and Computer Science, 
University of Basel, Basel, Switzerland}
\email{helmut.harbrecht@unibas.ch}
\author{Christoph Schwab}
\address{Ch.~Schwab,
Seminar for Applied Mathematics, ETH Z\"urich, 
Z\"urich, Switzerland}
\email{schwab@math.ethz.ch}
\author{Marco Zank}
\address{M.~Zank,
Institute of Applied Mathematics, TU Graz, Graz, Austria}
\email{zank@math.tugraz.at}
\subjclass[2010]{Primary 65N30, 65J15}
\date{}
\dedicatory{}
\keywords{
Wavelets, Modified Hilbert transform, 
Space-time variational formulation of parabolic PDEs,
Generalized Hilbert transform,
Sparse space-time approximation of evolution
}
\begin{document}
\begin{abstract}
On a finite time interval $(0,T)$, we consider the 
multiresolution Galerkin discretization of a modified 
Hilbert transform $\cH_T$ which arises in the space-time 
Galerkin discretization of the linear diffusion equation. To 
this end, we design spline-wavelet systems in $(0,T)$ 
consisting of piecewise polynomials of degree $\geq 1$ 
with sufficiently many vanishing moments which constitute 
Riesz bases in the Sobolev spaces $ H^{s}_{0,}(0,T)$ and 
$ H^{s}_{,0}(0,T)$. These bases provide stable multilevel 
splittings of the temporal discretization spaces into ``increment'' 
or ``detail'' spaces. 
Furthermore, they allow to optimally 
compress the nonlocal integrodifferential operators which 
appear in stable space-time variational formulations of 
initial-boundary value problems, such as the heat 
equation and the acoustic wave equation.

We then obtain \emph{sparse space-time tensor-product 
spaces} via algebraic ten\-sor-products of the temporal multilevel
discretizations with standard, hierarchic finite element 
spaces in the spatial domain (with standard Lagrangian FE
bases). Hence, the construction of multiresolutions in 
the spatial domain is not necessary.  An efficient 
multilevel preconditioner is proposed that solves the 
linear system of equations resulting from the sparse 
space-time Galerkin discretization with essentially 
linear complexity (in work and memory). A substantial 
reduction in the number of the degrees of freedom and 
CPU time (compared to time-marching discretizations) 
is demonstrated in numerical experiments.
\end{abstract}
AMS Subject Classification: primary 65N30

\maketitle 

\section{Introduction}
\label{Sec:Intro}

\subsection{Motivation and background}
The efficient numerical solution of initial-boundary value problems (IBVPs) 
is central in computational science and engineering. Accordingly,
numerical methods have been developed to a high degree of sophistication
and maturity. 
Foremost among these are time-stepping schemes, which are motivated by
the causality of the physical phenomena modeled by the equations.
They discretize the evolution equation via sequential numerical solution
of a sequence of spatial problems \cite{Thomee2nd}.
In recent years, however, principally motivated by applications from
numerical optimal control, see \cite{Gunzburger} for example,
so-called \emph{space-time methods} have emerged: 
these methods aim at the ``one-shot'' solution of the initial-boundary
value problem as a well-posed operator equation on a space-time cylinder.
The present article develops an efficient space-time method for
linear, parabolic initial boundary value problems.

For linear parabolic evolution equations such as the heat equation,
the \emph{analytic semigroup} property will imply exponential
convergence rates with respect to the number of temporal degrees of
freedom if spectral or so-called $hp$ Petrov--Galerkin discretizations
are employed, see, e.g.\ \cite{PSZ23,SS2000}, 
\emph{provided} that the 
initial boundary value problem is subject to a forcing function 
which depends analytically on the temporal variable. 

In \emph{space-time Galerkin discretizations} developed in \cite{PSZ23}, 
the exponential convergence rate of the corresponding $hp$ time discretizations 
from \cite{SS2000} implies correspondingly small temporal stiffness and mass matrices, 
which can be efficiently treated by standard, dense linear algebra.

For \emph{linear hyperbolic evolution equations} 
such as acoustic waves or 
time-domain Maxwell equations (e.g.\ \cite{HauserZankMaxwell2024}),
temporal analyticity of solutions is not to be taken for granted. 
Similarly, 
for the aforementioned parabolic IBVPs subject to 
body forces with low temporal regularity (as, e.g., arise in 
pathwise solutions of linear, parabolic stochastic PDEs
driven by rough noises, such as cylindrical Wiener processes),
the solution has in turn low temporal regularity. 
In these settings, 
\emph{non-adaptive, low-order temporal discretization} 
are optimal, and the $hp$-approaches from \cite{PSZ23,SS2000} are not effective.
Due to the possibly dense singular support 
in the time variable of solutions, 
adaptive time-stepping will in such settings likewise 
not afford significant order improvements. 
This implies, in the presently considered temporal
Petrov--Galerkin formulation in a duality pairing 
which is realized by the temporal Hilbert transform, 
\emph{large, dense} stiffness 
and mass matrices of size $O(N_t\times N_t)$ with $N_t$ 
denoting the number of temporal degrees of freedom. 

A prototypical linear parabolic initial-boundary value problem 
considered in the present article is to find the function $u(x,t)$ 
such that
\begin{equation} \label{Einf:PDG}
    \left \{ \begin{array}{rclcl}
    \partial_t u  + \mathcal L_x u & = & f & \quad & \mbox{in} \;
     Q = \domain \times (0,1), \\[1mm]
    \gamma_{x,0}(u) & = & 0 & & \mbox{on} \; \Gamma_\mathrm{D} \times [0,1], 
    \\[1mm] 
    \gamma_{x,1}(u) & = & 0 & & \mbox{on} \; \Gamma_\mathrm{N} \times [0,1], 
    \\[1mm] 
    u(\cdot,0) & = & 0 & & \mbox{in} \; \domain
    \end{array} \right.
\end{equation}
with given right-hand side $f$ and, for simplicity at this stage, 
homogeneous Dirichlet boundary conditions on $\Gamma_\mathrm{D} 
\subset \partial \domain$, homogeneous Neumann boundary conditions 
on $\Gamma_\mathrm{N} \subset \partial \domain$ and 
homogeneous initial conditions. 
Here, $\domain \subset \IR^n$, $n \in \{1,2,3\}$, 
is a bounded domain, with Lipschitz boundary for $n\geq 2$, 
$\mathcal L_x$ is a spatial, linear and strongly elliptic differential operator, 
$\gamma_{x,0}$ denotes the spatial Dirichlet trace map, 
and $\gamma_{x,1}$ is the spatial conormal trace operator
on $\partial\domain$, see Subsection~\ref{sec:WellPos}
for further details.

\subsection{Previous results}
\label{sec:PrevRes}
As already mentioned, space-time discretizations are motivated by 
applications from optimal control, and also in the context of
space-time a-posteriori discretization error estimation.
These applications require access to the entire approximate 
solution over the entire time horizon $(0,T)$ of the IBVP.
Space-time discretizations of discontinuous Galerkin (dG) type also fall in this
rubric of time-marching schemes (\cite{PSZ23,SS2000,Thomee2nd}).
These $hp$-methods leverage \emph{exponential convergence rates}
of $hp$-time discretizations typically afforded by the \emph{analytic 
semigroup} property of the solution operator of the parabolic IBVP.

Although linear parabolic evolution equations such as \eqref{Einf:PDG}
are long known to be well-posed as operator equations in suitable Bochnerian
function spaces on the space-time cylinder $Q$ (e.g.\ \cite{DL92,ScStxtWav,ScStxtNSE}), 
the nonsymmetry and anisotropy of the parabolic operator $\partial_t + \mathcal L_x$
in \eqref{Einf:PDG}
have obstructed 
development of stable space-time variational discretizations. 
Progress towards stable space-time discretizations has been
made in \cite{ScStxtWav}, where the well-posedness of linear,
parabolic initial-boundary value problems in $Q$ as a well-posed 
operator-equation in suitable (in general fractional-order) Bochner--Sobolev
spaces in $Q$ has been leveraged in connection with adaptive wavelet methods
in $Q$. 
The stability of the continuous parabolic operator $\partial_t + \mathcal L_x$ 
and Riesz bases in $x$ and in $t$ 
in the corresponding function spaces were shown in \cite{ScStxtWav}
to afford stable, adaptive Galerkin algorithms, with certain optimality 
properties. 
These properties imply that these algorithms produce finite-parametric
approximations of the solution $u$ on $Q$ which converge at (essentially optimal) 
best $n$-term rates provided by suitable Besov-scale smoothness 
of solutions $u$ in $Q$. In addition, the Riesz basis properties of the 
multiresolution analyses in $Q$ enable optimal preconditioning of the resulting
finite-parametric approximations of the IBVP.
A wide range of linear and nonlinear parabolic IVBPs
has been shown to admit corresponding well-posedness results 
(e.g.\ \cite{ScStxtWav,ScStxtNSE} and the references there).
One obstruction to the wide applicability of such space-time adaptive wavelet schemes 
is the need to build spline-wavelet bases in possibly complicated, 
polytopal spatial domains $\domain$ with Riesz basis properties 
in a scale of Sobolev spaces in $\domain$. 
Efforts towards solving \eqref{Einf:PDG} as an operator equation 
without recourse to multiresolution bases comprise \cite{Andreev2013}.
There, unlike the approach in the present paper, a residual minimization
formulation has been adopted \cite[Eqn.~(3.10)]{Andreev2013}, which 
resulted in normal equation like linear systems.
These systems were to be solved numerically, and suitable preconditioners
were proposed in \cite{Andreev2013}. 
The concrete construction of bases satisfying the stable subspace-splitting
axioms in \cite{Andreev2013} 
still required Riesz-bases in the spatial domain \cite[Sec.~6.2.3]{Andreev2013}.
Recent effort has therefore been directed at developing
efficient numerical methods for compressive or sparse discretizations of 
space-time solvers of IBVPs, that obviate the construction of 
multiresolution analyses in the spatial domain.
We mention \cite{MR3449910,RStvWJW22} for linear parabolic IBVPs 
and \cite{BMPS21} for the acoustic wave equation in polygons.

\subsection{Contributions}
\label{sec:Contr}
We consider sparse space-time discretizations of 
IBVPs for \eqref{Einf:PDG}, 
with low-order discretizations in the temporal variable $t$ 
and the spatial variable $x$, with $N_t$ temporal degrees of
freedom in $t$ and $N_x$ degrees of freedom in the spatial domain.
We adopt a nonlocal, 
space-time variational formulation of the parabolic IBVP
which was recently proposed and investigated 
in \cite{SteinbachZankETNA2020,SteinbachZankJNUM2021}.
As in \cite{RStvWJW22}, we employ spline-wavelet bases for the temporal 
discretization and one-scale Galerkin finite elements in the spatial 
variable.
Distinct from formulations which are based on so-called 
least-squares formulations (e.g. \cite{GGRSt2021} and the references there),
we opt here for a near-symmetric variational formulation of $\partial_t$ 
in \eqref{Einf:PDG} and a standard, symmetric multilevel Galerkin discretization
of the spatial, elliptic operator in \eqref{Einf:PDG}.
The fractional-order temporal operator, 
comprising a modified Hilbert transform $\HT$
which arises in the discretization of the temporal variable, 
implies dense matrices with $N_t^2$ entries 
resulting from the temporal (Petrov--)Galerkin discretization 
with $N_t$ many degrees of freedom.
Leveraging the (by now classical) theory of
\emph{wavelet compression of pseudodifferential operators}
developed in the 90ies in e.g.\ \cite{DHS1,SchneiderBuch1998} and the references there,
we prove here that the matrices of the discrete, nonlocal evolution operators can, 
in suitable, so-called \emph{spline-wavelet bases} resp.\ multiresolution analyses be
\emph{compressed to $O(N_t)$ nonvanishing entries without loss of consistency orders}.

While $\HT$ is not a classical pseudodifferential
operator, we prove that, nevertheless, in suitable temporal 
spline-wavelet bases subject to causal boundary conditions which we construct of any fixed, 
given polynomial order, with a corresponding number of vanishing moments, will imply 
\emph{optimal compressibility of the temporal stiffness and mass matrices} 
of Petrov--Galerkin discretizations in these bases.
To this end, we verify here that derivatives of 
the distributional kernel $K(s,t)$ of $\cH_T$ satisfy
so-called Calder\'{o}n--Zygmund estimates, which are at the heart of 
wavelet compression analysis of classical pseudodifferential operators
in, e.g., \cite{DHS1} and the references there.
Combined with local analyticity estimates of $K(s,t)$ and the 
corresponding (exponentially convergent) quadrature techniques 
from \cite{ZankIntegral2023}, we develop here efficient, fully discrete 
algorithms for the $O(N_t)$ computation, storage and application of compressed 
Petrov--Galerkin discretizations of $\cH_T$.

The computational methodology proposed here is based on 
multiresolution Galer\-kin discretizations of $\cH_T$ 
in the time interval $I=(0,T)$ by biorthogonal spline-wavelets.
We construct concrete multiresolution analyses (MRAs)
for which most matrix entries in the resulting (dense)
Galerkin matrix are numerically negligible. 
I.e., 
they can be replaced by zero without compromising the overall accuracy 
of the Galerkin discretization. 
The number of relevant, nonzero matrix coefficients 
(which are located in a-priori known matrix entries)
scales linearly with $N_t$, the number of degrees of freedom,
while discretization error accuracy
of the underlying Galerkin scheme is retained.
Due to the piecewise polynomial structure of the MRA, the analyticity 
of the kernel function of $\cH_T$ can be leveraged in exponentially 
convergent numerical quadrature proposed in \cite{ZankExact2021}.
These findings apply even for the scalar case, i.e. for the 
initial value ODE, as we show in Section~\ref{sec:NumExp1d}.

Based on the sparsity pattern of the wavelet-compressed 
system matrix, a fill-in reducing reordering of the matrix 
entries by means of nested dissection is employed, 
see \cite{Geo73,LRT79}. 
As firstly proposed and demonstrated in \cite{HM21}, this 
reordering in turn allows for the rapid inversion of the system 
matrix by the Cholesky decomposition or more generally by 
a nested-dissection version of the LU decomposition. 
As no additional approximation errors 
are introduced, this is a major difference to other approaches 
for the discretization and the arithmetics of nonlocal operators, 
e.g.\ by means of hierarchical matrices. As the hierarchical 
matrix format is not closed under arithmetic operations, a 
recompression step after each arithmetic (block) operation 
has to be performed, which results in accumulating and 
hardly controllable consistency errors for matrix 
factorizations, see \cite{Hack1,Hack2}.

The efficient iterative solution of the (large) linear system 
of equations resulting from sparse tensor-product
space-time Galerkin discretizations
requires a suitable preconditioner. 
We apply here the standard 
BPX scheme in the spatial domain 
combined with sparse, direct inversion of the 
wavelet-compressed stiffness and mass matrices for the 
first-order, temporal derivative. 
That way, we arrive at an iterative 
solver which essentially requires a fixed number of iterations 
to achieve a prescribed accuracy.

Having available a hierarchical basis like wavelets 
in the temporal variable,
allows to build \emph{sparse tensor-product spaces
with any selection of standard finite elements in space}. 
Hence, we 
can apply sparse, space-time tensor-product spaces to 
space-time approximations of
the heat equation. 
From an asymptotic, error vs.\ accuracy
point of view, the temporal variable is then ``for free''. 
This corresponds to what $hp$ timestepping in \cite{SS2000,PSZ23} achieves,
but holds here in the absence of
analytic time-regularity of the solution.

We show how to modify the current 
implementation for the discretization with respect to full 
tensor-product space, using only ingredients that have
already been used for the full space-time approximation.
Our algorithm admits sparse space-time Galerkin 
approximation for linear parabolic evolution equations.
Unlike earlier works (e.g.\ \cite{PSZ23,ScStxtNSE} and 
the references there), the presently proposed discretization 
handles low spatial and temporal regularity, is non-adaptive, 
with the number of degrees of freedom, work and memory
scaling essentially as those for a multilevel solve of one 
stationary, elliptic boundary value problem. It allows a 
numerical solution of essentially optimal complexity in 
terms of the number of degrees of freedom. The fractional 
temporal order variational formulation is facilitated by a 
nonlocal, $H^{1/2}(0,T)$-type duality pairing with 
forward/backward causality, which is realized numerically 
by an optimal, $O(N_t)$ compression and preconditioning
based on spline-wavelets in the time-domain. 

We develop the theory and compression estimates for 
$T=1$. All results in the present article generalize, however,  
to arbitrary, finite time horizon $0<T<\infty$ by scaling.

\subsection{Layout}
\label{sec:Lyout}
The outline of this article is as follows. 
Section~\ref{Sec:Preliminaries} provides notation,
the temporal Sobolev spaces, and the properties of the 
modified Hilbert transform. In Section~\ref{Sec:FctSpcxt}, 
we then introduce the space-time variational formulation of 
the heat equation. The Galerkin discretization of this variational 
formulation in full tensor-product spaces is presented in 
Section~\ref{sec:Discr}. In order to arrive at a data-sparse 
representation of the temporal system matrices, biorthogonal 
spline-wavelets are defined in Section~\ref{sec:wavelets}, 
enabling the wavelet matrix compression of the temporal 
matrices defined in Section~\ref{sec:compression}. Respective 
numerical experiments are performed in Section~\ref{Sec:NumExp}. 
In Section~\ref{sec:SG}, we show that a sparse tensor-product
discretization is much more efficient than the full 
tensor-product approximation introduced before. Finally, 
concluding remarks are stated in Section~\ref{sec:conclusio}
while the coefficients of the wavelets used are given in 
the Appendix~\ref{sec:appendix}.

\section{Preliminaries}
\label{Sec:Preliminaries}
We start with introducing the notation used in the present article.
Then, we define fractional-order Sobolev spaces on the finite time 
interval. Without loss of generality, we assume that $T=1$, i.e.\ 
$I=(0,1)$. Finally, we present the properties of the modified Hilbert 
transform which will play a crucial role in our approach.

\subsection{Notation}
\label{sec:Notat}
We denote by $\IN = \{1,2,\dots\}$  the natural numbers, 
and set $\IN_0=\IN\cup \{0\}$. 
With the Lipschitz domain $\domain$, we shall denote a bounded, polytopal subset 
of the Euclidean space $\mathbb{R}^n$ where, mostly, $n=2$,
and with $I=(0,1)$ the time interval. 
With $Q = \domain \times I$, we shall denote the space-time
cylinder.
We shall use in various places tensor-products $\otimes$ of spaces. 
For finite-dimensional spaces, $\otimes$ shall always denote
the algebraic tensor-product. 
In the infinite-dimensional case, the symbol $\otimes$ shall
denote the Hilbertian (``weak'', or $w_2$) 
tensor-product of separable, real Hilbert spaces (see \cite{Ryan2002}).
For a countable set $\calS$, $|\calS|$ shall denote the number of elements
in $\calS$ whenever this number is finite. All function spaces are 
real-valued, and dualities are with respect to $\IR$ throughout.
We shall denote duality 
between Sobolev spaces in the temporal domain $I$ and the 
spatial domain $\domain$ by a $'$. 
We identify $L^2(I)\simeq L^2(I)'$ 
and 
$L^2(\domain) \simeq L^2(\domain)'$. 
Further, we 
denote by $\spf{\cdot}{\cdot}_I$ the duality pairing in $[H^{1/2}_{,0}(I)]'$ 
and $H^{1/2}_{,0}(I)$ as extension of the inner product in $L^2(I)$.
Lastly, for $s \in \IR$, $s >0$, the Hilbert spaces $H^s(I)$ are 
the usual Sobolev spaces, endowed with their usual norms 
$\| \cdot \|_{H^s(I)}$.

\subsection{Sobolev spaces on $I=(0,1)$}
\label{Sec:SobolevIntervall}
In this subsection, we recall the Sobolev spaces on intervals. 
The Lebesgue space $L^2(I)$ is endowed with the usual norm 
$\norm{\cdot}_{L^2(I)}$, whereas the Sobolev space $H^1(I)$ 
is equipped with the norm 
$\big( \norm{\cdot}_{L^2(I)}^2 + \norm{\partial_t \cdot}_{L^2(I)}^2 \big)^{1/2}$.  
Due to Poincaré inequalities, the subspaces
\[
  H^1_{0,}(I) = \{z \in H^1(I) |\ z(0) = 0 \}
\]
and
\[
  H^1_{,0}(I) = \{z \in H^1(I) |\ z(1) = 0 \}
\]
are endowed with the norm $\norm{\partial_t \cdot}_{L^2(I)}$. 
We also define fractional-order Sobolev spaces by interpolation 
\cite[Chapter~1]{LM1}, i.e. with the notation of \cite{LM1}
\[
  H^{s}_{0,}(I) = [H^1_{0,}(I), L^2(I)]_{s}
\]
and
\[
  H^{s}_{,0}(I) = [H^1_{,0}(I), L^2(I)]_{s}
\]
for $s \in [0,1]$. 

For $\ell \in \IN_0$,  we designate eigenfunctions $V_\ell(t) = \sqrt{2} 
\sin \left( \left( \frac{\pi}{2} + \ell \pi \right) t \right)$ and eigenvalues 
$\lambda_\ell = \frac{\pi^2}{4}(2\ell+1)^2$ 
of the eigenvalue problem
\[
  -\partial_{tt} V_\ell = \lambda_\ell V_\ell  \quad \text{ in } I, 
\quad V_\ell(0)=\partial_t V_\ell(1) = 0, \quad \norm{V_\ell}_{L^2(I)} = 1,
\]
see \cite[Subsection~3.4.1]{ZankDissBuch2020} for details. 
For the Sobolev space $H^{s}_{0,}(I)$, $s\in [0,1]$, 
we then consider the interpolation norm 
\[
\norm{z}_{H^{s}_{0,}(I)} 
= 
\left( \sum_{\ell=0}^{\infty} \lambda_\ell^s \abs{z_\ell}^2 \right)^{1/2}, 
\quad z \in H^{s}_{0,}(I),
\]
with the expansion coefficients $z_\ell = \int_0^1 z(t) V_\ell(t) \mathrm dt$. 
This norm coincides with the norm induced on $ H^{s}_{0,}(I) = 
[H^1_{0,}(I), L^2(I)]_{s}$ through the real method of interpolation 
(see \cite[Theorem 15.1 on page 98]{LM1}). 

Analogously, for $H^{s}_{,0}(I)$, $s\in [0,1]$, we consider the interpolation norm
\[
\norm{w}_{H^{s}_{,0}(I)} 
= 
\left( \sum_{\ell=0}^{\infty} \hat \lambda_\ell^s \abs{w_\ell}^2 \right)^{1/2}, 
\quad w \in H^{s}_{,0}(I),
\]
with the coefficients $w_\ell = \int_0^1 w(t) W_\ell(t) 
\mathrm dt$.
Here, $W_\ell(t) = \sqrt{2} 
\cos \left( \left( \frac{\pi}{2} + \ell \pi \right) t \right)$ 
and 
$\hat \lambda_\ell = \frac{\pi^2}{4}(2\ell+1)^2$ 
are eigenpairs of
\[
  -\partial_{tt} W_\ell = \hat \lambda_\ell W_\ell  \quad \text{ in } I, 
\quad \partial_t W_\ell(0)=  W_\ell(1) = 0, \quad \norm{W_\ell}_{L^2(I)} = 1.
\]
Note that $\lambda_\ell = \hat \lambda_\ell$ for all $\ell \in \IN_0$.

\subsection{Modified Hilbert transform}
\label{sec:HT}
We detail next the modified Hilbert transform $\HT$, which was
introduced in \cite{SteinbachZankETNA2020, ZankDissBuch2020}, 
and recall its main properties. It is an essential ingredient in 
a stable Petrov--Galerkin discretization of the 
variational formulation~\eqref{eq:IBVPVar} for the time derivative $\partial_t$. 
We only recap definitions and analytic properties. 
For details and proofs, we refer to 
\cite{LoescherSteinbachZankHT2024, SteinbachZankETNA2020, 
ZankDissBuch2020, ZankExact2021, ZankIntegral2023}.

For a given function $z \in L^2(I)$ with Fourier coefficients
\begin{equation*}
  z_k = \sqrt{2} \int_0^1 z(t) \sin \left( \left( \frac{\pi}{2} + k\pi \right) t \right) \mathrm dt = \int_0^1 z(t) V_k(t) \mathrm dt 
\end{equation*}
and the series representation
\begin{equation*}
  z(t) = \sum_{k=0}^\infty z_k
  \sqrt{2} \sin \left( \left( \frac{\pi}{2} + k\pi \right) t \right) = \sum_{k=0}^\infty z_k
  V_k(t), 
  \quad t \in I,
\end{equation*}
the modified Hilbert transform is defined by the series
\begin{equation}\label{HT:HT}
  (\HT z)(t) = \sum_{k=0}^\infty z_k \sqrt{2}
  \cos \left( \left( \frac{\pi}{2} + k\pi \right) t \right) = \sum_{k=0}^\infty z_k W_k(t), \quad t \in I.
\end{equation}
Here, $V_k$ and $W_k$ are eigenfunctions of corresponding 
eigenvalue problems given in Section~\ref{Sec:SobolevIntervall}.
For $s \in [0,1]$, the mapping
\begin{equation*}
    \HT \colon \, H^s_{0,}(I) \to H^s_{,0}(I)
\end{equation*}
is an isometry. 
This follows from
\[
  \norm{z}_{H^s_{0,}(I)}^2 
= \sum_{\ell=0}^{\infty} \lambda_\ell^s \abs{z_\ell}^2 
= \sum_{\ell=0}^{\infty} \hat \lambda_\ell^s \abs{z_\ell}^2 
= \norm{\HT z}_{H^s_{, 0}(I)}^2, \quad z \in H^s_{0,}(I).
\]
We also have 
\begin{equation} \label{HT:Beschr}
  \forall v,w \in H^{1/2}_{0,}(I) : 
\spf{\partial_t v}{\HT w}_I \leq \norm{v}_{H^{1/2}_{0,}(I)} \norm{w}_{H^{1/2}_{0,}(I)},
\end{equation}
see \cite[Eq.~(3.40)]{ZankDissBuch2020}. 
Further, the properties  
\begin{equation} \label{HT:Elliptisch}
    \forall v \in H^{1/2}_{0,}(I) : \spf{\partial_t v}{\HT v}_I = \norm{v}_{H^{1/2}_{0,}(I)}^2
\end{equation}
and
\begin{equation} \label{HTPositivsemi}
    \forall v \in L^2(I) : \spf{v}{\HT v}_{L^2(I)} \geq 0
\end{equation}
proven in \cite[Corollary~2.5]{SteinbachZankETNA2020} and 
\cite[Lemma~2.6]{SteinbachZankETNA2020} hold true. 
Adapting the proof of \cite[Lemma~2.6]{SteinbachZankETNA2020}, 
we even have
\begin{equation*}
    \forall s \in (0,1] : \forall\, 0\not= v \in H^s_{0,}(I): \spf{v}{\HT v}_{L^2(I)} > 0,
\end{equation*}
which is proven in \cite[Lemma~2.2]{LoescherSteinbachZankHT2024}.

In addition, we have that
\begin{equation*}
    \forall u, w \in L^2(I): \spf{\HT u}{w}_{L^2(I)} = \spf{u}{\HT^{-1}w}_{L^2(I)}
\end{equation*}
by \cite[Lemma~3.4.7]{ZankDissBuch2020} and
\begin{equation*}
    \forall u \in H^1_{0,}(I) : \forall v \in L^2(I): 
    \spf{\partial_t \HT u}{v}_{L^2(I)} = - \spf{\HT^{-1} \partial_t u}{v}_{L^2(I)}
\end{equation*}
by applying the same arguments as in the proof of 
\cite[Lemma~2.3]{SteinbachZankETNA2020}.

The modified Hilbert transform $\HT$ allows for integral 
representations, see \cite{SteinbachZankJNUM2021, 
ZankIntegral2023}, which is the motivation for the wavelet 
compression, originally introduced in the context of boundary 
integral equations. To present these, we recall that we considered 
without loss of generality $T=1$ (upon a dilation $t\mapsto t/T$). 

\begin{lemma}[Lemma~2.1 in \cite{SteinbachZankJNUM2021}] 
\label{Lem:HT:L2H1}
 For $v \in L^2(I),$ the modified Hilbert transform~$\HT$, defined in 
  \eqref{HT:HT}, allows the integral representation
  \begin{equation*}
    (\HT v)(t) = {\mathrm  {v.p.}} \int_0^1 K(s,t) \, v(x) \mathrm ds, \quad t \in I,
  \end{equation*}
  as a Cauchy principal value integral, where the kernel function is given as
  \begin{equation} \label{HT:L2Kern}
    K(s,t) = \frac{1}{2} \left[  \frac{1}{\sin \frac{\pi(s+t)}{2}  }  + \frac{1}{\sin \frac{\pi(s-t)}{2}  }   \right].
  \end{equation}
 For $v \in H^1(I),$ the operator $\HT$
 allows the integral representation
    \begin{equation*}
    (\HT v)(t) =
    - \frac{2}{\pi} v(0) \ln \tan \frac{\pi t}{4} + 
      \int_0^1 K_{-1}(s,t) \, \partial_s v(s) \mathrm ds, \quad t \in I,
    \end{equation*}
  as a weakly singular integral, where
  \begin{equation}\label{eq:K-1}
      K_{-1}(s,t) = -\frac{1}{\pi} \ln \left[ \tan \frac{\pi (s+t)}{4} \tan \frac{\pi \abs{t-s}}{4} \right].
  \end{equation}
\end{lemma}
\begin{remark}\label{rmk:HTTP}
We adopt the convention (with slight abuse of notation) 
that $\HT$ applied to functions in 
Bochner spaces $H^s_{0,}(I;H)$ taking values in a separable
Hilbert space $H$, i.e.\ $\HT\otimes \mathrm{Id}_H$, 
is denoted again by $\HT$.
\end{remark}

\section{Space-time variational formulation}
\label{Sec:FctSpcxt}
In this section, we present the space-time variational formulations 
upon which our Galerkin discretizations will be based. To this end,
we shall first introduce appropriate function spaces on the 
space-time cylinder $Q$.

\subsection{Bochner--Sobolev spaces on the space-time cylinder $Q$}
\label{sec:SobSpcxt}
Let $\domain \subset \IR^n$, $n\in\{1,2,3\}$, 
be a bounded, polytopal Lipschitz domain. 
We equip the Lebesgue space $L^2(\domain)$ 
with the usual norm $\norm{\cdot}_{L^2(\domain)}$ and the 
Sobolev space $H^1_0(\domain)$, fulfilling homogeneous 
Dirichlet conditions on $\partial \domain$, with the norm 
$\norm{\cdot}_{H^1_0(\domain)} = \abs{\cdot}_{H^1(\domain)} 
= \norm{\nabla_x \cdot}_{L^2(\domain)^n}$. More generally, 
we divide the boundary $\partial \domain$ into a Dirichlet 
boundary $\Gamma_\mathrm{D}$ and a Neumann boundary 
$\Gamma_\mathrm{N}$, where each boundary part consists 
of a union of $n_\mathrm{D} \in \IN$ resp.\ $n_\mathrm{N} 
\in \IN_0$ faces of the spatial domain $\domain$. Note that,
by assumption, we have $n_\mathrm{D}>0$, $n_\mathrm{N}
\geq 0$. Thus, $|\Gamma_\mathrm{D}|>0$ if $n=2,3$ or 
$\Gamma_\mathrm{D}\ne\emptyset$ if $n=1$, i.e.\ there is 
a nonempty Dirichlet part. With this notation, the space 
$H^1_{\Gamma_\mathrm{D}}(\domain)$ consists of all 
functions in the Sobolev space $H^1(\domain)$, fulfilling 
homogeneous Dirichlet conditions on $\Gamma_\mathrm{D}$, 
and is equipped with the norm $\norm{\cdot}_{H^1_{\Gamma_\mathrm{D}}
(\domain)} = \abs{\cdot}_{H^1(\domain)} = \norm{\nabla_x \cdot}_{L^2(\domain)^n}$. 
Further, for $s \geq 0$, the Hilbert spaces $H^s(\domain)$
are the usual Sobolev spaces endowed with their usual norms 
$\| \cdot \|_{H^s(\domain)}$.

For the space-time 
cylinder $Q=\domain \times I$, we extend the Sobolev
spaces on intervals of Subsection~\ref{Sec:SobolevIntervall} 
to vector-valued Bochner--Sobolev spaces, e.g., 
\begin{align*}
  L^2(I;H^1_{\Gamma_\mathrm{D}}(\domain)) &\simeq H^1_{\Gamma_\mathrm{D}}(\domain) \otimes L^2(I),
  \\
  H^s_{0,}(I;L^2(\domain)) &\simeq L^2(\domain) \otimes H^s_{0,}(I) \quad \text{ for } s \in [0,1],
  \\
  H^s_{,0}(I;L^2(\domain)) &\simeq L^2(\domain) \otimes H^s_{,0}(I) \quad \text{ for } s \in [0,1],
\end{align*}
where $\otimes$ signifies the Hilbertian tensor-product of separable 
Hilbert spaces, see e.g.\ \cite[Section~2.4]{ZankDissBuch2020} for details.
Last, we introduce the intersection spaces
\begin{align*}
H^{1,1/2}_{\Gamma_\mathrm{D};0,}(Q) 
=& 
\big(H^1_{\Gamma_\mathrm{D}}(\domain) \otimes L^2(I)\big)
\cap \big( L^2(\domain) \otimes H^{1/2}_{0,}(I) \big), \\
H^{1,1/2}_{\Gamma_\mathrm{D};,0}(Q) 
=& 
\big(H^1_{\Gamma_\mathrm{D}}(\domain) \otimes L^2(I)\big)
\cap \big(L^2(\domain) \otimes H^{1/2}_{,0}(I) \big),
\end{align*}
equipped with the sum norm, 
and the duality pairing 
\begin{equation}\label{eq:QDual}
\langle\cdot,\cdot\rangle_Q: \big[H^{1,1/2}_{\Gamma_\mathrm{D};,0}(Q)\big]' \times H^{1,1/2}_{\Gamma_\mathrm{D};,0}(Q) \to \IR
\end{equation}
as continuous extension of the $L^2(Q)$ inner product.

\subsection{Well-posedness and isomorphism results}
\label{sec:WellPos}
In this subsection, 
we state the space-time variational setting for the IBVP~\eqref{Einf:PDG}. 
First, we introduce the spatial operators $\mathcal L_x$, $\gamma_{x,0}$, $\gamma_{x,1}$, 
occurring in IBVP~\eqref{Einf:PDG}. 
We assume that the spatial differential operator $\mathcal L_x$ is 
linear, self-adjoint, and in divergence form, i.e.
\begin{equation} \label{DiffOpx}
  \mathcal L_x = -\nabla_x \cdot(A(\cdot) \nabla_x).
\end{equation}
Here, the diffusion coefficient $A\in L^\infty(\domain;\IR^{n\times n}_\mathrm{sym})$ is a symmetric and positive definite matrix function $A(x)$ of $x\in \domain$,
which does not depend on the temporal variable $t$. Further, we assume uniform positive definiteness of~$A$:
\be\label{eq:Acoerc}
\essinf_{x\in \domain} 
          \inf_{0 \ne \xi\in \IR^n} \frac{\xi^\intercal A(x) \xi}{\xi^\intercal\xi} > 0  
\;.
\ee
Next, $\gamma_{x,0}$
is the spatial Dirichlet trace operator, which is given by $\gamma_{x,0}(v) = v|_{\partial \domain \times I}$ for a continuous function $v$ on $\overline{Q}$. Moreover, $\gamma_{x,1}$ denotes the spatial conormal trace map, 
given for a sufficiently smooth function $v$ by $\gamma_{x,1}(v) = n_x \cdot (A(\cdot) \nabla_x v)|_{\partial \domain \times I}$ 
with the exterior unit normal vector $n_x \in L^\infty(\partial \domain;\IR^n)$ of $\domain$.

The space-time variational formulation of IBVP~\eqref{Einf:PDG} is to find $u \in H^{1,1/2}_{\Gamma_\mathrm{D};0,}(Q)$ such that
\begin{equation} \label{eq:IBVPVar}
  \forall w \in H^{1,1/2}_{\Gamma_\mathrm{D};,0}(Q)  : \quad b(u,w) = \langle f, w \rangle_Q,
\end{equation}
where $f\in \big[H^{1,1/2}_{\Gamma_\mathrm{D};,0}(Q)\big]'$ is given. Here, $b(\cdot, \cdot) \colon \, H^{1,1/2}_{\Gamma_\mathrm{D};0,}(Q) \times H^{1,1/2}_{\Gamma_\mathrm{D};,0}(Q) \to \IR$ is the continuous bilinear form given by
\begin{equation} \label{bilinearform}
    b(u,w) = \langle \partial_t u , w \rangle_Q  + \langle A \nabla_x u, \nabla_x w \rangle_{L^2(Q)^n}.
\end{equation}
Proceeding as in \cite[Theorem~3.2]{SteinbachZankETNA2020} or \cite[Theorem~3.4.19]{ZankDissBuch2020}, the space-time variational formulation~\eqref{eq:IBVPVar} has a unique solution. We summarize this result in the following theorem.
\begin{theorem}\label{thm:Biso}
Assume that $|\Gamma_\mathrm{D}|>0$ if $n=2,3$ or $\Gamma_\mathrm{D}\ne\emptyset$ if $n=1$, 
and that the spatial differential operator $\mathcal L_x$ in \eqref{DiffOpx} with the diffusion coefficient
$A\in L^\infty(\domain;\IR^{n\times n}_{\mathrm{sym}})$ 
satisfies~\eqref{eq:Acoerc}.

Then, 
the space-time variational formulation~\eqref{eq:IBVPVar} of IBVP~\eqref{Einf:PDG} is uniquely solvable, and
induces an isomorphism~ $\partial_t + \mathcal L_x  \in \cL_{\mathrm{iso}}
\big(H^{1,1/2}_{\Gamma_\mathrm{D};0,}(Q),\big[H^{1,1/2}_{\Gamma_\mathrm{D};,0}(Q)\big]'\big).$
\end{theorem}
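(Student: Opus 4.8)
The statement is an inf-sup (Banach--Ne\v{c}as--Babu\v{s}ka) theorem for the bilinear form $b(\cdot,\cdot)$ on the pair of Bochner--Sobolev spaces, and the natural route is to verify the three conditions of the Ne\v{c}as theorem: boundedness of $b$, a uniform inf-sup condition, and a nontriviality (``injectivity on the test side'') condition. Boundedness is immediate from the Cauchy--Schwarz inequality together with the continuity of the modified Hilbert transform $\HT\colon H^{1/2}_{0,}(0,1)\to H^{1/2}_{,0}(0,1)$, which realizes the temporal duality pairing $\langle\partial_t\cdot,\cdot\rangle_Q$ used in \eqref{bilinearform}; the spatial term is bounded by the $L^\infty$ bound on $A$. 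So the heart of the matter, as in \cite[Theorem~3.2]{SteinbachZankETNA2020} and \cite[Theorem~3.4.19]{ZankDissBuch2020}, is the inf-sup condition.

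\textbf{The inf-sup condition.} The plan is to construct, for each given $0\ne u\in H^{1,1/2}_{\Gamma_\mathrm D;0,}(Q)$, an explicit test function $w_u\in H^{1,1/2}_{\Gamma_\mathrm D;,0}(Q)$ for which $b(u,w_u)\gtrsim \|u\|\,\|w_u\|$ with constants independent of $u$. The canonical choice, going back to the analysis of the single-layer-type realization of $\partial_t$ via $\HT$, is to take $w_u$ built from $u$ by applying the modified Hilbert transform in time (and leaving the spatial variable untouched), i.e.\ roughly $w_u = \HT u$ up to the correct identification of spaces; this is precisely where the eigenfunction systems $\{V_\ell\}$, $\{W_\ell\}$ and the relation $\lambda_\ell=\hat\lambda_\ell$ enter. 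With that choice one uses the key positivity/coercivity property of $\HT$, namely that $\langle\partial_t v,\HT v\rangle_I$ controls $\|v\|_{H^{1/2}_{0,}(0,1)}^2$, together with the spatial coercivity \eqref{eq:Acoerc} of $A$ on $H^1_{\Gamma_\mathrm D}(\domain)$ (valid because $|\Gamma_\mathrm D|>0$, resp.\ $\Gamma_\mathrm D\ne\emptyset$ when $n=1$, so that a Poincar\'e--Friedrichs inequality holds). Expanding $u$ in the tensor basis $\{e_j\otimes V_\ell\}$ with $\{e_j\}$ a spatial eigenbasis of $\mathcal L_x$, one reduces the estimate to a family of scalar two-by-two computations in the span of $V_\ell$ and $W_\ell$, from which the uniform inf-sup constant is read off.

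\textbf{Surjectivity and the transposed condition.} Since the bilinear form is nonsymmetric and the trial and test spaces differ, one must also rule out a nontrivial ``cokernel'': for every $0\ne w\in H^{1,1/2}_{\Gamma_\mathrm D;,0}(Q)$ there exists $u$ with $b(u,w)\ne 0$. This follows by the symmetric argument, applying the analogous coercivity of the adjoint modified Hilbert transform $\HT^{-1}$ (equivalently, the backward-in-time version), exchanging the roles of $\{V_\ell\}$ and $\{W_\ell\}$; the identity $\lambda_\ell=\hat\lambda_\ell$ again makes the two sides match. Combining the inf-sup bound, the transposed nontriviality, and boundedness, the Banach--Ne\v{c}as--Babu\v{s}ka theorem yields existence, uniqueness, and the stability estimate, i.e.\ $\partial_t+\mathcal L_x\in\cL_{\mathrm{iso}}\big(H^{1,1/2}_{\Gamma_\mathrm D;0,}(Q),[H^{1,1/2}_{\Gamma_\mathrm D;,0}(Q)]'\big)$.

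\textbf{Main obstacle.} The delicate point is not the abstract functional-analytic wrapper but the precise mapping properties of $\HT$ between the two \emph{different} fractional spaces $H^{1/2}_{0,}(0,1)$ and $H^{1/2}_{,0}(0,1)$ and the associated coercivity inequality $\langle\partial_t v,\HT v\rangle_I\ge c\|v\|^2_{H^{1/2}_{0,}(0,1)}$ with a constant $c$ independent of $v$; one must be careful that the duality pairing $\langle\cdot,\cdot\rangle_I$ is the one defined via extension of the $L^2(I)$ inner product, that the boundary conditions at $t=0$ (trial) and $t=1$ (test) are respected, and that the interpolation-space norms used throughout are the ones induced by the eigenexpansions above. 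Once these temporal estimates are in hand---quoting them from \cite{SteinbachZankETNA2020,ZankDissBuch2020}---the tensorization with the spatial coercive form and the passage to $Q$ are routine.
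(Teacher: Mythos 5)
Your plan is the right \emph{genre} of argument (Banach--Ne\v{c}as--Babu\v{s}ka, spatial eigenfunction expansion, temporal properties of $\HT$), and it matches the outline of the proofs in \cite{SteinbachZankETNA2020,ZankDissBuch2020} to which the paper simply defers. But there is a genuine gap at the one step that carries all the weight: the test function $w_u=\HT u$ does \emph{not} yield the inf-sup condition in the full intersection norm of $H^{1,1/2}_{\Gamma_\mathrm{D};0,}(Q)$. With that choice one gets
\[
b(u,\HT u)=\|u\|^2_{H^{1/2}_{0,}(I;L^2(\domain))}+\langle A\nabla_x u,\nabla_x\HT u\rangle_{L^2(Q)^n},
\]
and the second term is only \emph{nonnegative} (by \eqref{HTPositivsemi}); it does not control $\|\nabla_x u\|^2_{L^2(Q)}$. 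Quantitatively, $\langle\cdot,\HT\cdot\rangle_{L^2(I)}$ is not $L^2$-coercive: $\langle V_k,\HT V_k\rangle_{L^2(I)}=\langle V_k,W_k\rangle_{L^2(I)}=\tfrac{2}{(2k+1)\pi}\to0$, so after expanding in spatial eigenfunctions $\phi_i$ with eigenvalues $\mu_i$, the term $\mu_i\langle u_i,\HT u_i\rangle_{L^2(I)}$ cannot be bounded below by $c\,\mu_i\|u_i\|^2_{L^2(I)}$ with $c>0$ independent of $u_i$. Hence $\sup_w b(u,w)/\|w\|$ is bounded below only by $\|u\|^2_{H^{1/2}_{0,}(I;L^2(\domain))}/\|u\|_{H^{1,1/2}_{\Gamma_\mathrm{D};0,}(Q)}$, which degenerates; your ``two-by-two computations in the span of $V_\ell$ and $W_\ell$'' do not repair this because the obstruction is exactly the decay of the off-diagonal coupling $\langle V_\ell,W_\ell\rangle$. (Consistently, the paper itself uses $\HT v$ as test function only to obtain discrete stability in the \emph{weaker} norm $H^{1/2}_{0,}(I;L^2(\domain))$, cf.\ \eqref{b_H12elliptisch}, and notes in Section~5 that the temporal mass matrix ${\bf M}_j^t$ has condition number growing like $1/h_j^t$ --- another symptom of the missing $L^2$-coercivity.)

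What is actually needed, and what constitutes the substance of \cite[Theorem~3.2]{SteinbachZankETNA2020} and \cite[Theorem~3.4.19]{ZankDissBuch2020}, is a uniform-in-$\mu$ stability bound for the scalar problem $\partial_t p+\mu p=g$, $p(0)=0$, of the form $\|p\|^2_{H^{1/2}_{0,}(I)}+\mu\|p\|^2_{L^2(I)}\le c\,\|g\|^2_{*}$ with $c$ independent of $\mu>0$, in the norms dual to $\|q\|^2=\|q\|^2_{H^{1/2}_{,0}(I)}+\mu\|q\|^2_{L^2(I)}$. This is obtained there not from the single test function $\HT p$ but from the explicit solution representation (Duhamel formula) together with Fourier estimates, or equivalently from a more elaborate test-function construction that supplements $\HT p$ so as to recover the $\mu\|p\|^2_{L^2}$ contribution. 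Your proposal should either quote that scalar lemma explicitly as the key input, or supply the additional argument; as written, the ``heart of the matter'' you correctly identify is asserted rather than proved. The remaining ingredients (boundedness via \eqref{HT:Beschr} and the $L^\infty$ bound on $A$, the transposed nontriviality by the symmetric backward-in-time argument, and the tensorization over the spatial eigenbasis, which requires $|\Gamma_\mathrm{D}|>0$ for the Poincar\'e--Friedrichs inequality) are fine.
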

\begin{remark}\label{rmk:HomBC}
  Inhomogeneous Neumann boundary conditions on $\Gamma_\mathrm{N}$ can be incorporated in a weak sense, resulting in an additional term on the right-hand side of the variational formulation~\eqref{eq:IBVPVar}.
  
  Inhomogeneous Dirichlet boundary conditions on $\Gamma_\mathrm{D}$ and initial conditions can be treated by using inverse trace theorems and by superposition since the IBVP~\eqref{Einf:PDG} is linear.
\end{remark}

\section{Discretization}
\label{sec:Discr}
We present finite-dimensional subspaces of the Bochnerian 
function spaces on $Q$ which shall be used in the ensuing 
space-time Galerkin discretizations at the end of this section.
As all subspaces are algebraic tensor-products of subspaces 
on $I=(0,T)$ and on $\domain$, we introduce the factor spaces 
separately. Without loss of generality, we assume $T=1$, 
i.e. $I=(0,1)$.

\subsection{Tensor-product trial spaces on $Q = \domain \times I$}
\label{Sec:StwLineare}
In this subsection, 
we introduce the temporal and spatial finite element spaces 
as well as their tensor-product.

\subsubsection{Temporal trial space}
\label{sec:TempFESpc}
For a refinement level $j \in \IN_0$ and 
given $N_j^t \in \IN$, 
consider in $I=(0,1)$ the 
temporal uniform mesh
\[
  \mathcal T_j^t = \big\{ \tau_{j,\ell}^t \subset I: \; 
  \tau_{j,\ell}^t =(t_{j,\ell-1},t_{j,\ell}) \text{ for } \ell=1,\dots,N_j^t\big\}
\]
given by $t_{j,\ell} = \ell/N_j^t$ for $\ell=0,\dots,N_j^t$ with uniform 
mesh size $h_j^t = 1/N_j^t$. 
With the temporal mesh $\mathcal T_j^t$, 
we associate the trial space
\begin{equation*}
  S_j^t = \{ v_j^t\in C(\overline{I}): \, \forall \tau \in \mathcal{T}_t^j:
  \, v_j^t|_{\overline{\tau}} \in \mathbb P^{d-1}(\overline{\tau})\text{ and } v_j^t(0) = 0\}
\end{equation*}
of globally continuous, piecewise polynomial functions, 
fulfilling the homogeneous initial condition. 
Here, $\mathbb P^{d-1}(A)$ denotes the space of polynomials of order
$d$ on a set $A \subset \IR^m$, $m \in \IN$. 
For $d=2$,
we obtain the usual hat functions $\phi_{j,k}^t:\overline{I}\to\IR$, 
$k=1,\dots,N_j^t$, fulfilling $\phi_{j,k}^t(t_{j,\ell})=\delta_{k,\ell}$ 
for $\ell=0,\dots,N_j^t$ with the Kronecker delta $\delta_{k,\ell}$.
For $d>2$, the present setting includes finite elements of higher 
order and B-splines. Note that in case of B-splines appropriate 
wavelet bases used later for matrix compression
are available.

\subsubsection{Spatial trial space}
\label{sec:SpatFESpc}
To define the spatial trial space in $\domain$, 
we assume $\domain \subset \IR^n$, $n \in \{1,2,3\}$, 
to be a given bounded Lipschitz domain, which is an interval 
$\domain=(0,L)$ with $L>0$ for $n=1$, polygonal for $n=2$, 
or polyhedral for $n=3$. For a refinement level $j\in \IN_0$, 
we decompose $\domain$ into an admissible decomposition
\[
  \mathcal{T}_j^x = \{ \tau_{j,i}^x\subset \IR^n: i=1,\dots,n_j^x\},
\]
where $n_j^x$ is the number of spatial elements $\tau_{j,i}^x$. 
We denote by $h_{j,i}^x$, $i=1,\dots,n_j^x$, the local mesh 
size and $h_j^x = \max_i h_{j,i}^x$ is the maximal mesh size. 
To a shape-regular sequence $(\mathcal T_j^x)_{j\in \IN_0}$, 
we relate the spatial finite element spaces 
\[
  S_j^x = \big\{ w_j^x\in C(\overline{\domain}):
  \forall \tau\in\mathcal T_j^x : w_j^x|_{\overline{\tau}}
  \in \mathbb P^{p_x}(\overline{\tau}) \text{ and } w_j^x|_{\Gamma_\mathrm{D}} = 0 \big\}
\]
of functions, which are globally continuous, 
piecewise polynomial functions of degree $p_x \ge 1$ and 
fulfill the homogeneous Dirichlet condition on $\Gamma_\mathrm{D}$. 
We denote by 
$\phi_{j,k}^x:\overline{\domain} \to \IR$, $k=1,\dots, N_j^x$, 
basis functions of $S_j^x$, i.e. we have that
$S_j^x = \spn\{\phi_{j,k}^x : k=1,\dots,N_j^x\}$.

\subsubsection{Space-time tensor-product trial space}
\label{sec:SpaceTime}
The space-time trial space is obtained via
the tensor-product of the trial spaces in 
space and time. 
To this end, fixing a level $j \in \IN_0$, 
we define the space-time partition of $Q$
\[
  \mathcal T_j^{x,t} = \big\{ \tau^x \times \tau^t \subset \IR^{n+1}:
  \tau^x\in\mathcal T_j^x, \, \tau^t\in\mathcal T_j^t \big\}
\]
with space-time mesh size $h_j = \max\{h_j^x,h_j^t\}$.
The respective space-time trial space is given by
\begin{equation*}
  S_j^{x,t} = S_j^x\otimes S_j^t \subset H^{1,1/2}_{\Gamma_\mathrm{D};0,}(Q),
\end{equation*}
fulfilling the homogeneous Dirichlet and initial conditions. 
A basis is via dyads of the basis functions in space and time
(being here equivalent to pointwise products), 
that is
\[
  S_j^{x,t} = \spn\big\{\phi_{j,k}^x \otimes \phi_{j,\ell}^t:
  			k=1,\ldots,N_j^x,\,\ell=1,\ldots,N_j^t\big\}.
\]

\subsection{Conforming space-time Galerkin discretization}
\label{Sec:FEMxt}
We shall next introduce the conforming 
space-time Galerkin discretization of the parabolic evolution equation~\eqref{Einf:PDG}
by employing the modified Hilbert transform $\HT$,
which leads to the $H^{1/2}_{0,}(I; L^2(\domain))$-ellipticity of the bilinear form $b(\cdot,\HT\cdot)$
in \eqref{bilinearform}. More precisely, tensorizing \eqref{HT:Elliptisch} and 
\eqref{HTPositivsemi} with $L^2(\domain)$ we have that (cp.\ \eqref{eq:QDual})
\begin{align}
  \| v \|_{H^{1/2}_{0,}(I; L^2(\domain))}^2 &= \langle \partial_t v, \HT v \rangle_Q \nonumber \\
  &\leq  \langle \partial_t v, \HT v \rangle_Q + \langle A \nabla_x v, \nabla_x \HT v \rangle_{L^2(Q)^n} 
     = b(v,\HT v) 
\label{b_H12elliptisch}
\end{align}
for all $v \in H^{1,1/2}_{\Gamma_\mathrm{D};0,}(Q)$.

Then, for any refinement level $j \in \IN_0$, the 
full tensor-product space-time Galerkin
finite element method to find 
$u_j\in S_j^{x,t} \subset H^{1,1/2}_{\Gamma_\mathrm{D};0,}(Q)$ 
such that
\begin{equation} \label{FEMxt:WaermeFEM}
  \forall v_j\in S_j^{x,t}: \quad 
  b(u_j, \HT v_j)
  = \spf{f}{\HT v_j}_{Q}
\end{equation}
is well-defined, with $b(\cdot,\cdot)$ as in \eqref{bilinearform}.
The tensor-product space-time Galerkin projection $u_j\in S_j^{x,t}$ 
in \eqref{FEMxt:WaermeFEM} is unconditionally stable with 
the space-time stability estimate
\begin{equation*}
  \norm{u_j}_{ H^{1/2}_{0,}(I;L^2(\domain))} 
  \leq \norm{f}_{[H^{1/2}_{,0}(I;L^2(\domain))]'},
\end{equation*}
provided that $f \in [H^{1/2}_{,0}(I;L^2(\domain))]'$, 
see \cite[Theorem~3.4.20]{ZankDissBuch2020}.
%
\begin{theorem}\label{thm:convergence}
Let $j \in \IN_0$ be a given refinement level. 
In addition, 
let $u \in H^{1,1/2}_{\Gamma_\mathrm{D};0,}(Q)$ be the 
unique solution of the space-time variational formulation~\eqref{eq:IBVPVar}.
Suppose the regularity assumption $\mathcal L_x u \in L^2(Q)$ holds.
Let $u_j \in S_j^{x,t}$ be the space-time approximation given by~\eqref{FEMxt:WaermeFEM}.

Then, the space-time error estimate
  \begin{equation} \label{thm:Fehlerabsch}
    \| u - u_j \|_{H^{1/2}_{0,}(I; L^2(\domain))} 
        \leq  2\| u - P_j u \|_{H^{1/2}_{0,}(I; L^2(\domain))} 
         + \big\| (\mathrm{Id}_{xt} - P_j^t \otimes \mathrm{Id}_x) \mathcal L_x u \big\|_{[H^{1/2}_{,0}(I; L^2(\domain))]'}
  \end{equation}
holds true with $P_j u = (P_j^t \otimes P_j^x) u \in S_j^{x,t}$,
where $P_j^t \colon \, L^2(I) \to S_j^t$ 
is any temporal projection
and 
$P_j^x$ denotes the spatial Ritz projection 
$P_j^x \colon \, H^1_{\Gamma_\mathrm{D}}(\domain)\to S_j^x$.
It is defined as follows: given $z \in H^1_{\Gamma_\mathrm{D}}(\domain)$,
find $P_j^x z \in S_j^x$ such that
  \begin{equation}
\label{eq:Ritz}
    \forall w_j \in S_j^x: \quad \langle A \nabla_x P_j^x z, \nabla_x w_j \rangle_{L^2(\Omega)^n} 
    	= \langle A \nabla_x z, \nabla_x w_j \rangle_{L^2(\Omega)^n}.
  \end{equation}
\end{theorem}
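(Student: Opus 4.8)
The plan is to combine the $H^{1/2}_{0,}(I;L^2(\domain))$-ellipticity of $b(\cdot,\HT\cdot)$ from \eqref{b_H12elliptisch} with a Galerkin-orthogonality-type argument adapted to the Petrov--Galerkin structure of \eqref{FEMxt:WaermeFEM}. First I would use the isometry property of $\HT\colon H^{1/2}_{0,}(I;L^2(\domain))\to H^{1/2}_{,0}(I;L^2(\domain))$ (the tensorized version from Remark~\ref{rmk:HTTP}) so that testing against $\HT v_j$ ranges over a space isometric to $S_j^{x,t}$. Writing the error as $u-u_j = (u-P_j u) + (P_j u - u_j)$, I would estimate the discrete part $w_j := P_j u - u_j \in S_j^{x,t}$ by
\[
  \|w_j\|_{H^{1/2}_{0,}(I;L^2(\domain))}^2 = \langle \partial_t w_j, \HT w_j\rangle_Q \le b(w_j, \HT w_j),
\]
using that the spatial bilinear form $\langle A\nabla_x\,\cdot\,,\nabla_x\HT\,\cdot\,\rangle_{L^2(Q)^n}$ contributes nonnegatively when tested symmetrically, exactly as in \eqref{b_H12elliptisch}.

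\textbf{Key steps.} The next step is to replace $b(w_j,\HT w_j)$ using the identity $b(u_j,\HT v_j) = \langle f,\HT v_j\rangle_Q = b(u,\HT v_j)$ valid for all $v_j\in S_j^{x,t}$ (the last equality because $u$ solves the continuous problem \eqref{eq:IBVPVar} and $\HT v_j \in H^{1,1/2}_{\Gamma_\mathrm{D};,0}(Q)$ is an admissible test function). Hence
\[
  b(w_j,\HT w_j) = b(P_j u - u, \HT w_j) = \langle \partial_t(P_j u - u), \HT w_j\rangle_Q + \langle A\nabla_x(P_j u - u), \nabla_x \HT w_j\rangle_{L^2(Q)^n}.
\]
Here the crucial observation is that $P_j u = (P_j^t\otimes P_j^x)u$ and the Ritz projection $P_j^x$ defined by \eqref{eq:Ritz} makes the spatial term collapse: since $\nabla_x$-orthogonality holds in $S_j^x$ and $\HT w_j$ still lies in $S_j^x\otimes H^{1/2}_{,0}(I)$ fibrewise, one rewrites $\langle A\nabla_x(P_j^x - \mathrm{Id})z, \nabla_x\varphi\rangle = 0$ for $\varphi\in S_j^x$. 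The remaining term $\langle \partial_t(P_j u - u),\HT w_j\rangle_Q$ is bounded by splitting $\partial_t(P_j u - u) = \partial_t(P_j^t\otimes P_j^x - \mathrm{Id})u$; using \eqref{HT:Beschr} tensorized with $L^2(\domain)$ one gets the bound $\|(P_j u - u)\|_{H^{1/2}_{0,}} \|w_j\|_{H^{1/2}_{0,}}$ for the part coming from the temporal projection error, and for the part involving $(\mathrm{Id} - P_j^t\otimes\mathrm{Id}_x)\mathcal L_x u$ one integrates by parts in $t$ (or uses $\langle\partial_t\cdot,\HT\cdot\rangle$-duality) to move a derivative onto $\HT w_j$ and recognize the dual norm $\|(\mathrm{Id}_{xt} - P_j^t\otimes\mathrm{Id}_x)\mathcal L_x u\|_{[H^{1/2}_{,0}(I;L^2(\domain))]'}$, invoking $\mathcal L_x u\in L^2(Q)$ so that this quantity is finite. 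Dividing through by $\|w_j\|_{H^{1/2}_{0,}}$ and applying the triangle inequality $\|u-u_j\|\le\|u-P_j u\| + \|w_j\|$ then yields \eqref{thm:Fehlerabsch} with the stated constant $2$ (one factor from the triangle inequality, one from $\|u - P_j u\|$ reappearing in the bound on $\|w_j\|$).

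\textbf{Main obstacle.} The delicate point is handling the mixed term $\langle \partial_t(P_j u - u),\HT w_j\rangle_Q$ correctly: one must carefully separate the contribution where the temporal derivative meets the temporal projection error (controlled directly by the continuity bound \eqref{HT:Beschr}) from the contribution $(\mathrm{Id}_{xt} - P_j^t\otimes\mathrm{Id}_x)\mathcal L_x u$, and this requires using the PDE $\partial_t u = f - \mathcal L_x u = f + \nabla_x\cdot(A\nabla_x u)$ to trade the time derivative of $u$ against $\mathcal L_x u\in L^2(Q)$. The interplay between the $\partial_t$-duality pairing realized by $\HT$ and the Ritz projection in space, together with keeping the tensor-product structure $P_j^t\otimes P_j^x$ aligned with the two norms involved, is where the real bookkeeping lies; everything else is a routine application of \eqref{b_H12elliptisch}, \eqref{HT:Beschr} and the isometry property of $\HT$.
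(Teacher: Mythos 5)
Your overall skeleton --- triangle inequality, the ellipticity \eqref{b_H12elliptisch}, Galerkin orthogonality, the continuity bound \eqref{HT:Beschr}, the Ritz projection and the isometry of $\HT$ --- is exactly the paper's, but the key step is executed incorrectly. You claim that the spatial term $\langle A\nabla_x(P_j u - u), \nabla_x\HT w_j\rangle_{L^2(Q)^n}$ collapses to zero by Ritz orthogonality, and you then try to manufacture the dual-norm term $\|(\mathrm{Id}_{xt}-P_j^t\otimes\mathrm{Id}_x)\mathcal L_x u\|_{[H^{1/2}_{,0}(I;L^2(\domain))]'}$ out of the time-derivative term by invoking the PDE $\partial_t u = f - \mathcal L_x u$. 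Neither half of this is right. The Ritz property \eqref{eq:Ritz} only removes the discrepancy between $P_j^x$ and $\mathrm{Id}_x$: writing $(P_j^t\otimes P_j^x)u - u = \big(P_j^t\otimes(P_j^x-\mathrm{Id}_x)\big)u + \big((P_j^t-\mathrm{Id}_t)\otimes\mathrm{Id}_x\big)u$, only the first summand is annihilated against $\nabla_x\HT w_j$ (which is fibrewise in $S_j^x$); the second summand survives. It is precisely this survivor that, after integration by parts in $\domain$ and using that $P_j^t$ commutes with $\mathcal L_x$, becomes $\langle((P_j^t\otimes\mathrm{Id}_x)-\mathrm{Id}_{xt})\mathcal L_x u, \HT w_j\rangle_{L^2(Q)}$ and is bounded by the dual norm times $\|\HT w_j\|_{H^{1/2}_{,0}(I;L^2(\domain))} = \|w_j\|_{H^{1/2}_{0,}(I;L^2(\domain))}$ via the isometry. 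This is where the second term of \eqref{thm:Fehlerabsch} comes from, and it is the only place the hypothesis $\mathcal L_x u\in L^2(Q)$ enters.

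Correspondingly, the time-derivative term needs no splitting at all: \eqref{HT:Beschr} tensorized with $L^2(\domain)$ gives $|\langle\partial_t(P_j u - u),\HT w_j\rangle_Q|\le \|u-P_ju\|_{H^{1/2}_{0,}(I;L^2(\domain))}\,\|w_j\|_{H^{1/2}_{0,}(I;L^2(\domain))}$ in one step, and there is nothing left in it from which to extract the $\mathcal L_x u$ contribution. Your proposed substitution $\partial_t u = f - \mathcal L_x u$ would inject the datum $f$ into the estimate, which does not appear in \eqref{thm:Fehlerabsch} and cannot be eliminated along the route you describe. Once you reassign the $\mathcal L_x u$ term to the spatial part of $b(\cdot,\cdot)$ and drop the appeal to the PDE, the rest of your bookkeeping (division by $\|w_j\|$, the factor $2$) is correct and coincides with the paper's proof.
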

\begin{proof}
The proof is a generalization of 
the proof of 
\cite[Theorem~3.4]{SteinbachZankETNA2020} or 
\cite[Theorem~3.4.24]{ZankDissBuch2020}. 
The triangle inequality yields
\begin{equation*}
  \| u - u_j \|_{H^{1/2}_{0,}(I; L^2(\domain))} \leq \| u - P_j u \|_{H^{1/2}_{0,}(I; L^2(\domain))} 
  + \| u_j - P_j u \|_{H^{1/2}_{0,}(I; L^2(\domain))}.
\end{equation*}
The last term is estimated as follows. 

Using the $H^{1/2}_{0,}(I; L^2(\domain))$-ellipticity 
of the bilinear form $b(\cdot,\cdot)$ in~\eqref{HT:Elliptisch} and \eqref{HTPositivsemi}
with the Galerkin orthogonality corresponding 
to \eqref{FEMxt:WaermeFEM} give
\begin{align*}
    \| u_j - P_j u \|_{H^{1/2}_{0,}(I; L^2(\domain))}^2 
    &\leq  b\big(u_j - P_j u, \HT (u_j - P_j u )\big) \\
    &= b\big( u - P_j u, \HT (u_j - P_j u)\big) \\
    &= \langle \partial_t (u- P_j u), \HT (u_j - P_j u) \rangle_Q \\
    &+ \langle A \nabla_x u, \nabla_x \HT (u_j - P_j u)\rangle_{L^2(Q)^n} \\
    &- \langle A \nabla_x (P_j^t \otimes P_j^x) u, \nabla_x \HT (u_j - P_j u) \rangle_{L^2(Q)^n}
   \;.
\end{align*}
From property~\eqref{HT:Beschr} of $\mathcal H_T$ and 
the Ritz projection property \eqref{eq:Ritz} of $P_j^x$, 
it follows from the preceding bound with integration 
by parts in $\domain$ that
\begin{align*}
    \| u_j - P_j u \|_{H^{1/2}_{0,}(I; L^2(\domain))}^2 
&\leq \| u - P_j u \|_{H^{1/2}_{0,}(I; L^2(\domain))} \| u_j - P_j u \|_{H^{1/2}_{0,}(I; L^2(\domain))} \\
    &+ \langle A \nabla_x u, \nabla_x \HT (u_j - P_j u)\rangle_{L^2(Q)^n} \\
    &- \langle A \nabla_x(P_j^t\otimes \mathrm{Id}_x) u, \nabla_x \HT (u_j - P_j u) \rangle_{L^2(Q)^n} \\
    &= \| u - P_j u \|_{H^{1/2}_{0,}(I; L^2(\domain))} \| u_j - P_j u \|_{H^{1/2}_{0,}(I; L^2(\domain))} \\
    &- \langle \mathcal L_x u, \HT (u_j - P_j u)\rangle_{L^2(Q)} \\
    &+ \langle \big(P_j^t \otimes \mathrm{Id}_x \big) \mathcal L_xu, \HT (u_j - P_j u) \rangle_{L^2(Q)}.
\end{align*}
Duality and the isometry property of $\HT$
finally yield
\begin{multline*}
   \| u_j - P_j u \|_{H^{1/2}_{0,}(I; L^2(\domain))}^2 
\leq 
\| u - P_j u \|_{H^{1/2}_{0,}(I; L^2(\domain))} \| u_j - P_j u \|_{H^{1/2}_{0,}(I; L^2(\domain))} 
\\
 +\big\| \big(\mathrm{Id}_{xt} - P^t_j \otimes \mathrm{Id}_x \big) \mathcal L_x u \big\|_{[H^{1/2}_{,0}(I; L^2(\domain))]'} 
 \| u_j - P_j u \|_{H^{1/2}_{0,}(I; L^2(\domain))},
\end{multline*}
which implies the assertion.
\end{proof}

To prove convergence rates given in the forthcoming 
Corollary~\ref{cor:FullTPRate}, we define the $H^{1/2}_{0,}(I)$-projection 
$P_j^{1/2,t} \colon \, H^{1/2}_{0,}(I) \to S_j^t$ by
\begin{equation} \label{H12-Projektion}
  \forall z_j \in S_j^t : \quad  \langle \partial_t P_j^{1/2,t} v, 
  	\HT z_j \rangle_{L^2(I)} = \langle \partial_t v, \HT z_j \rangle_{I}
\end{equation}
for a given function $v \in H^{1/2}_{0,}(I)$, fulfilling the error estimate
\[
  \| v - P_j^{1/2,t} v \|_{H^{1/2}_{0,}(I)} \leq c (h_j^t)^{d-1/2} \|v\|_{H^d(I)}
\]
provided that $v \in H^{1/2}_{0,}(I) \cap H^d(I)$ with a constant 
$c>0$. Further, setting the function $w = \int_0^\cdot \HT(v - P_j^{1/2,t} v)(s) \mathrm ds
\in H^1_{0,}(I)$, i.e.\ an Aubin--Nitsche-type argument, yields
\begin{align*}
    \| v - P_j^{1/2,t} v \|_{L^2(I)}^2 &= \langle \HT (v - P_j^{1/2,t} v) , \HT (v - P_j^{1/2,t} v) \rangle_{L^2(I)} \\
    &= \langle \partial_t w , \HT (v - P_j^{1/2,t} v) \rangle_{L^2(I)} \\
    &= \langle \partial_t (w - P_j^{1/2,t} w), \HT (v - P_j^{1/2,t} v) \rangle_{L^2(I)} \\
    &\leq \| w - P_j^{1/2,t} w \|_{H^{1/2}_{0,}(I)}  \| v - P_j^{1/2,t} v \|_{H^{1/2}_{0,}(I)} \\
    &\leq c (h_j^t)^{1/2} \| \partial_t w \|_{L^2(I)} \| v - P_j^{1/2,t} v \|_{H^{1/2}_{0,}(I)}  \\
    &= c (h_j^t)^{1/2} \| v - P_j^{1/2,t} v \|_{L^2(I)} \| v - P_j^{1/2,t} v \|_{H^{1/2}_{0,}(I)},
\end{align*}
where the isometry property of $\HT$, the fundamental theorem 
of calculus, the boundedness~\eqref{HT:Beschr} and an 
$H^{1/2}_{0,}(I)$-error estimate of the $H^{1/2}_{0,}(I)$-projection 
are used. Thus, we arrive at the $L^2(I)$-error estimate
\begin{equation} \label{H12Projektion_L2fehler}
  \| v - P_j^{1/2,t} v \|_{L^2(I)} \leq c (h_j^t)^d \|v\|_{H^d(I)}
\end{equation}
whenever $v \in H^{1/2}_{0,}(I) \cap H^d(I)$.

\begin{corollary}
\label{cor:FullTPRate}
Let the assumptions of Theorem~\ref{thm:convergence} be satisfied. 
Consider temporal biorthogonal spline-wavelets $S_j^t$ 
of order $d\geq 2$ in $I$ with uniform mesh size $h_j^t$
and 
Lagrangian finite elements $S_j^x$ of degree $p_x \geq 1$ 
on a regular, simplicial partition of $\domain$ 
with maximal mesh size $h_j^x$.

Assume also 
that the exact solution $u$
of the variational IBVP \eqref{eq:IBVPVar} 
admits the regularity 
$u\in H^{1/2}_{0,}(I; H^{p_x+1/2}(\domain)) \cap H^{d}(I;L^2(\domain))$ 
and 
$\mathcal L_x u \in H^{d-1}(I;L^2(\domain))$. 
Assume finally that the spatial
Ritz projection $P_j^x \colon \, H^1_{\Gamma_\mathrm{D}}(\domain)\to S_j^x$ 
satisfies
\begin{equation} \label{RitzFehlerL2}
\| u - (\mathrm{Id}_t\otimes P_j^x) u \|_{H^{1/2}_{0,}(I; L^2(\domain))} 
\leq 
c (h_j^x)^{p_x+1/2} \| u \|_{H^{1/2}_{0,}(I; H^{p_x+1/2}(\domain))}
\end{equation}
with a constant $c>0$.

Then, the space-time error estimate
$$
 \| u - u_j \|_{H^{1/2}_{0,}(I; L^2(\domain))}
  \le 
C [ (h_j^t)^{d-1/2} + (h_j^x)^{p_x+1/2} ]
$$
holds true with a constant $C>0$.
\end{corollary}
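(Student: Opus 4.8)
The plan is to combine the abstract error bound from Theorem~\ref{thm:convergence} with concrete approximation estimates for the temporal and spatial projections appearing there. Recall that Theorem~\ref{thm:convergence} gives
\[
 \| u - u_j \|_{H^{1/2}_{0,}(I; L^2(\domain))}
 \leq 2\| u - P_j u \|_{H^{1/2}_{0,}(I; L^2(\domain))}
 + \big\| (\mathrm{Id}_{xt} - P_j^t \otimes \mathrm{Id}_x) \mathcal L_x u \big\|_{[H^{1/2}_{,0}(I; L^2(\domain))]'},
\]
for \emph{any} temporal projection $P_j^t\colon L^2(I)\to S_j^t$ and the spatial Ritz projection $P_j^x$. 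The first step is to choose $P_j^t := P_j^{1/2,t}$, the $H^{1/2}_{0,}(I)$-projection introduced in \eqref{H12-Projektion}, for which we have already recorded the error estimate $\| v - P_j^{1/2,t} v \|_{H^{1/2}_{0,}(I)} \leq c (h_j^t)^{d-1/2} \|v\|_{H^d(I)}$ and, from the Aubin--Nitsche argument, the $L^2$-estimate \eqref{H12Projektion_L2fehler}.

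Second, I would split $P_j u = P_j^t \otimes P_j^x u$ and correspondingly write, by a triangle inequality tensorized over the factors,
\[
\| u - P_j u \|_{H^{1/2}_{0,}(I; L^2(\domain))}
\le \| (\mathrm{Id}_t - P_j^t)\otimes \mathrm{Id}_x\, u \|_{H^{1/2}_{0,}(I; L^2(\domain))}
+ \| P_j^t \otimes(\mathrm{Id}_x - P_j^x)\, u \|_{H^{1/2}_{0,}(I; L^2(\domain))}.
\]
For the first term on the right, tensorizing the temporal estimate $\| v - P_j^{1/2,t} v \|_{H^{1/2}_{0,}(I)} \leq c (h_j^t)^{d-1/2} \|v\|_{H^d(I)}$ with $L^2(\domain)$ gives a bound $c (h_j^t)^{d-1/2}\|u\|_{H^d(I;L^2(\domain))}$, using the regularity $u\in H^d(I;L^2(\domain))$. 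For the second term, I would use the uniform $H^{1/2}_{0,}(I)$-stability of $P_j^{1/2,t}$ (which follows from its definition via the isometry property of $\HT$ and \eqref{HT:Beschr}, \eqref{HT:Elliptisch}) to pull it out, and then apply the assumed spatial Ritz-projection estimate \eqref{RitzFehlerL2}, yielding a bound $c (h_j^x)^{p_x+1/2}\|u\|_{H^{1/2}_{0,}(I;H^{p_x+1/2}(\domain))}$.

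Third, for the consistency term $\big\| (\mathrm{Id}_{xt} - P_j^t \otimes \mathrm{Id}_x) \mathcal L_x u \big\|_{[H^{1/2}_{,0}(I; L^2(\domain))]'}$, I would observe that this quantity involves only the temporal projection acting on $\mathcal L_x u \in H^{d-1}(I;L^2(\domain))$, so it reduces to a temporal approximation estimate in the $[H^{1/2}_{,0}(I)]'$-norm. Since $[H^{1/2}_{,0}(I)]'$ is weaker than $L^2(I)$, one may bound it by $\| (\mathrm{Id}_t - P_j^{1/2,t})\mathcal L_x u\|_{L^2(I;L^2(\domain))}$, and then invoke (the $\domain$-tensorized version of) the $L^2$-estimate \eqref{H12Projektion_L2fehler} — more precisely, an analogous estimate at reduced smoothness order $d-1$, i.e.\ $\| w - P_j^{1/2,t} w\|_{L^2(I)} \le c (h_j^t)^{d-1}\|w\|_{H^{d-1}(I)}$ for $w\in H^{d-1}(I)$ — applied to $w=\mathcal L_x u(\cdot,x)$ and integrated over $\domain$. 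This gives a contribution $c(h_j^t)^{d-1}\|\mathcal L_x u\|_{H^{d-1}(I;L^2(\domain))}$; since $d-1 \ge d-1/2$ is \emph{false}, one should note $d-1 \le d-1/2$ so this term is in fact $O((h_j^t)^{d-1})$, which is the dominant temporal contribution and matches the claimed rate $(h_j^t)^{d-1/2}$ only if one is slightly more careful — so actually the sharp statement should read with exponent $\min\{d-1/2,\,?\}$; here I would simply keep the weaker bound and observe $(h_j^t)^{d-1/2}\le (h_j^t)^{d-1}$ is reversed, hence the estimate as stated in the corollary with exponent $d-1/2$ follows once one checks that the consistency term is in fact controlled at order $d-1/2$ by using the $[H^{1/2}_{,0}(I)]'$-norm rather than the $L^2$-norm. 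Collecting the three contributions and absorbing all constants into a single $C>0$ then yields the asserted estimate.

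The main obstacle I anticipate is precisely this last point: getting the consistency term to the order $(h_j^t)^{d-1/2}$ rather than the crude $(h_j^t)^{d-1}$ requires exploiting the negative-order norm $[H^{1/2}_{,0}(I;L^2(\domain))]'$ through a duality/Aubin--Nitsche argument for the $H^{1/2}$-projection on the detail space, in the spirit of the computation preceding \eqref{H12Projektion_L2fehler} but in the dual norm; one must verify that the extra half power of $h_j^t$ is gained, which hinges on the approximation properties of $S_j^t$ of order $d$ together with the mapping properties of $\HT$ between $H^{s}_{0,}$ and $H^{s}_{,0}$. The remaining steps — tensorization of one-dimensional estimates, stability of $P_j^{1/2,t}$, and the triangle-inequality splitting — are routine.
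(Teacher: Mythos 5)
Your treatment of the approximation term $\|u-P_ju\|_{H^{1/2}_{0,}(I;L^2(\domain))}$ is sound: the splitting $\mathrm{Id}-P_j^t\otimes P_j^x=(\mathrm{Id}_t-P_j^t)\otimes\mathrm{Id}_x+P_j^t\otimes(\mathrm{Id}_x-P_j^x)$, the tensorized temporal estimate, the $H^{1/2}_{0,}(I)$-stability of $P_j^{1/2,t}$ and \eqref{RitzFehlerL2} together give the two rates $(h_j^t)^{d-1/2}$ and $(h_j^x)^{p_x+1/2}$. (The paper reaches the same conclusion by a slightly different route, choosing $P_j^t$ to be the $L^2(I)$-projection, splitting into three terms, and inserting $P_j^{1/2,t}$ as an intermediate quantity together with a temporal inverse inequality.)

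The genuine gap is the consistency term $\|(\mathrm{Id}_{xt}-P_j^t\otimes\mathrm{Id}_x)\mathcal L_xu\|_{[H^{1/2}_{,0}(I;L^2(\domain))]'}$, and you say so yourself: your third paragraph first produces the insufficient rate $(h_j^t)^{d-1}$, then asserts that the missing half power ``follows once one checks'' a duality argument that you never carry out. The check is not routine with your choice of projection. The standard way to gain the factor $(h_j^t)^{1/2}$ in the negative-order norm is
\[
\|(\mathrm{Id}-\Pi)w\|_{[H^{1/2}_{,0}(I)]'}
=\sup_{v}\frac{\langle(\mathrm{Id}-\Pi)w,(\mathrm{Id}-\Pi)v\rangle_{L^2(I)}}{\|v\|_{H^{1/2}_{,0}(I)}}
\lesssim (h_j^t)^{1/2}\,\|(\mathrm{Id}-\Pi)w\|_{L^2(I)},
\]
which requires $\Pi$ to be \emph{$L^2$-self-adjoint} so that $(\mathrm{Id}-\Pi)$ may be moved onto the test function. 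This is exactly why the paper takes $P_j^t$ to be the $L^2(I)$-projection: then the above display combined with the $L^2$-rate $(h_j^t)^{d-1}\|\mathcal L_xu\|_{H^{d-1}(I;L^2(\domain))}$ yields $(h_j^t)^{d-1/2}$. Your projection $P_j^{1/2,t}$ is orthogonal with respect to $\langle\partial_t\cdot,\HT\cdot\rangle_{L^2(I)}$, not $\langle\cdot,\cdot\rangle_{L^2(I)}$, so the duality step fails as written; moreover $P_j^{1/2,t}$ is only defined on $H^{1/2}_{0,}(I)$ via \eqref{H12-Projektion}, whereas $\mathcal L_xu$ is merely assumed in $H^{d-1}(I;L^2(\domain))$ without the causal boundary condition, so it is not even clear that $P_j^{1/2,t}$ may be applied to it. The cleanest repair is to keep $P_j^{1/2,t}$ (or any $H^{1/2}_{0,}$-stable projection) in the approximation term but to use the $L^2(I)$-projection in the consistency term — which is admissible since Theorem~\ref{thm:convergence} holds for \emph{any} temporal projection, but one must then use the same projection in both places or rerun the argument with the $L^2$-projection throughout, as the paper does.
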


\begin{proof}
Let the temporal projection $P_j^t$ be the $L^2(I)$-projection on $S_j^t$. 
Using the triangle inequality for the first part on the right-hand side 
of the estimate~\eqref{thm:Fehlerabsch} yields
\begin{align*}
    \| u - P_j u \|_{H^{1/2}_{0,}(I; L^2(\domain))} 
    	\leq& \| (\mathrm{Id}_{xt} - \mathrm{Id}_t \otimes P_j^x) u \|_{H^{1/2}_{0,}(I; L^2(\domain))} 
    \\
    &+ \| (\mathrm{Id}_{xt} - P_j^t \otimes \mathrm{Id}_x) u \|_{H^{1/2}_{0,}(I; L^2(\domain))} 
    \\
    &+ \| (\mathrm{Id}_{xt} - P_j^t \otimes \mathrm{Id}_x)  (\mathrm{Id}_{xt} - \mathrm{Id}_t \otimes P_j^x)  u \|_{H^{1/2}_{0,}(I; L^2(\domain))}.
\end{align*}
For $\| (\mathrm{Id}_{xt} - \mathrm{Id}_t \otimes P_j^x) u \|_{H^{1/2}_{0,}(I; L^2(\domain))}$, 
we apply estimate~\eqref{RitzFehlerL2}. For $\| (\mathrm{Id}_{xt} 
- P_j^t \otimes \mathrm{Id}_x) u \|_{H^{1/2}_{0,}(I; L^2(\domain))}$, the 
triangle inequality upon inserting the temporal $H^{1/2}_{0,}(I)$-projection 
$P_j^{1/2,t}$ given in \eqref{H12-Projektion}
and then using the temporal inverse inequality
\[
     \forall z_j \in S_j^t: \| z_j \|_{H^{1/2}_{0,}(I)} \leq c_{\mathrm{I}} (h_j^t)^{-1/2} \| z_j \|_{L^2(I)}
\]
give the error bound
\begin{align*}
  \| (\mathrm{Id}_{xt} - P_j^t \otimes \mathrm{Id}_x) &u \|_{H^{1/2}_{0,}(I; L^2(\domain))} \leq \| (\mathrm{Id}_{xt} - P_j^{1/2,t} \otimes \mathrm{Id}_x) u \|_{H^{1/2}_{0,}(I; L^2(\domain))} \\
  &+ \| (P_j^{1/2,t} \otimes \mathrm{Id}_x - P_j^t \otimes \mathrm{Id}_x) u \|_{H^{1/2}_{0,}(I; L^2(\domain))} \\
  \leq& c_1 (h_j^t)^{d-1/2} + c_{\mathrm{I}} (h_j^t)^{-1/2} \| (P_j^{1/2,t} \otimes \mathrm{Id}_x - \mathrm{Id}_{xt}) u \|_{L^2(Q)} \\
  &+ c_{\mathrm{I}} (h_j^t)^{-1/2} \| (\mathrm{Id}_{xt} - P_j^t \otimes \mathrm{Id}_x) u \|_{L^2(Q)} \\  
  \leq& c_2 (h_j^t)^{d-1/2}
\end{align*}
with constants $c_1, c_2>0$, when $L^2(I)$-error estimates for 
the $L^2(I)$- and $H^{1/2}_{0,}(I)$-projection are applied, where 
the latter follows from the Aubin--Nitsche argument~\eqref{H12Projektion_L2fehler}
for the $H^{1/2}_{0,}(I)$-projection. For the term
$\| (\mathrm{Id}_{xt} - P_j^t \otimes \mathrm{Id}_x)  
(\mathrm{Id}_{xt} - \mathrm{Id}_t \otimes P_j^x)  u \|_{H^{1/2}_{0,}(I; L^2(\domain))}$, 
we employ the triangle inequality to get
\begin{multline*}
 \| (\mathrm{Id}_{xt} - P_j^t \otimes \mathrm{Id}_x)  
 (\mathrm{Id}_{xt} - \mathrm{Id}_t \otimes P_j^x)  u \|_{H^{1/2}_{0,}(I; L^2(\domain))} 
 \leq  \| (\mathrm{Id}_{xt} - \mathrm{Id}_t \otimes P_j^x)  u \|_{H^{1/2}_{0,}(I; L^2(\domain))} \\
 + \| (P_j^t \otimes \mathrm{Id}_x)  (\mathrm{Id}_{xt} - \mathrm{Id}_t \otimes P_j^x)  u \|_{H^{1/2}_{0,}(I; L^2(\domain))}.
\end{multline*}
For the first term, we use error estimate~\eqref{RitzFehlerL2}. 
For the second term, we apply the stability of the $L^2(I)$-projection 
in $H^{1/2}_{0,}(I)$ and again, error estimate~\eqref{RitzFehlerL2}. 
With this, we derive the error bound $\| (\mathrm{Id}_{xt} 
- P_j^t \otimes \mathrm{Id}_x)  (\mathrm{Id}_{xt} 
- \mathrm{Id}_t \otimes P_j^x)  u \|_{H^{1/2}_{0,}(I; L^2(\domain))} 
\leq c (h_j^x)^{p_x+1/2} \| u \|_{H^{1/2}_{0,}(I; H^{p_x+1/2}(\domain))}$ 
with a constant $c>0$. Thus, we conclude that
\begin{multline*}
\| u - P_j u \|_{H^{1/2}_{0,}(I; L^2(\domain))} 
\leq 
c (h_j^x)^{p_x+1/2} \| u \|_{H^{1/2}_{0,}(I; H^{p_x+1/2}(\domain))} 
\\
+ c (h_j^t)^{d-1/2} \| u \|_{H^{d}(I;L^2(\domain))}
\end{multline*}
with a constant $c>0$.

For the second part on the right-hand side of the 
estimate~\eqref{thm:Fehlerabsch}, 
we use the standard error estimate of the 
temporal $L^2(I)$-projection with respect to $\| \cdot \|_{[H^{1/2}_{,0}(I)]'}$ 
to conclude that  
\[
\big\| (\mathrm{Id}_{xt} - P_j^t \otimes \mathrm{Id}_x) \mathcal L_x u \big\|_{[H^{1/2}_{,0}(I; L^2(\domain))]'} 
\leq 
c (h_j^t)^{d-1/2} \| \mathcal L_x u \|_{H^{d-1}(I;L^2(\domain))}
\]
with a constant $c>0$.
\end{proof}

\begin{remark}
The error estimate~\eqref{RitzFehlerL2} can be proven by 
an Aubin--Nitsche argument and function space interpolation, 
provided that the unique solution $w_g \in H^1_{\Gamma_\mathrm{D}}
(\domain)$ of the spatial variational formulation
\[
  \forall z \in H^1_{\Gamma_\mathrm{D}}(\domain): \quad 
  \langle A \nabla_x z, \nabla_x w_g \rangle_{L^2(\domain)^n} = \langle g, z \rangle_{L^2(\domain)}
\]
for a given $g \in L^2(\domain)$ satisfies the regularity 
assumption $w_g \in H^2(\domain)$ with estimate $\| w_g \|_{H^2(\domain)} 
\leq C \| g \|_{L^2(\domain)}$ with a constant $C> 0$. 
\end{remark}

The discrete variational formulation~\eqref{FEMxt:WaermeFEM} 
is equivalent to the global linear system
\begin{equation*}
  \big({\bf A}_j^t \otimes {\bf M}_j^x + {\bf M}_j^t\otimes {\bf A}_j^x\big) {\bf u}_j = {\bf f}_j
\end{equation*}
with the Kronecker matrix product $\otimes$.
Here, 
the temporal matrices given by
\begin{equation*}
    [{\bf A}_j^t]_{k,k'} = \spf{\partial_t \phi_{j,k'}^t}{\HT \phi_{j,k}^t}_{L^2(I)}, 
    \quad [{\bf M}_j^t]_{k,k'} = \spf{\phi_{j,k'}^t}{\HT \phi_{j,k}^t}_{L^2(I)}
\end{equation*}
for $k,k'=1,\dots,N_j^t$ 
and 
the spatial matrices are
\begin{equation*}
  [{\bf M}_j^x]_{k,k'} = \spf{\phi_{j,k'}^x}{\phi_{j,k}^x}_{L^2(\domain)}, 
  \quad [{\bf A}_j^x]_{k,k'} = \spf{\nabla_x \phi_{j,k'}^x}{\nabla_x \phi_{j,k}^x}_{L^2(\domain)^n}
\end{equation*}
for $k,k'=1,\dots,N_j^x$, where ${\bf f}_j$ is the related 
right-hand side.

The matrices ${\bf A}_j^t$, ${\bf M}_j^t$,
${\bf A}_j^x$, ${\bf M}_j^x$ are positive definite, see the 
properties of $\HT$ for the temporal matrices. 
Moreover, 
the matrices ${\bf A}_j^t$, ${\bf A}_j^x$, ${\bf M}_j^x$ are 
symmetric, whereas ${\bf M}_j^t$ is nonsymmetric.

\section{Wavelets and multiresolution analysis}
	\label{sec:wavelets}
Instead of using the single-scale basis in $S_j^t$ for
the temporal discretization, we shall switch to a wavelet 
basis in order to apply wavelet matrix compression for 
the data sparse representation of the dense matrices 
${\bf A}_j^t$ and ${\bf M}_j^t$. To this end, we assume 
without loss of generality that $T=1$ and consider the 
interval $I =(0,1)$. Moreover, for the sake of simplicity in 
representation, we omit the suffix $t$ in this section. 

A \emph{multiresolution analysis} (MRA for short) 
consists of a nested family of finite dimensional 
approximation spaces
\begin{equation}	
\label{eq:hierarchy}
\{0\} = S_{-1} \subset S_0 \subset S_1 \subset \cdots 
\subset S_j\subset \cdots \subset L^2(I),
\end{equation}
such that
\[ \overline{\bigcup_{j\ge0} S_j} = L^2(I)\quad\text{and}\quad
\dim S_j\sim 2^{j}.
\] We will
refer to \(j\) as the \emph{level} of \(S_j\) in the
multiresolution analysis.
Each space $S_j$ is endowed with a \emph{single-scale basis}
\[
\Phi_j = \{\phi_{j,k}:k\in\Delta_j\},
\]
i.e.\ $S_j=\operatorname{span}\Phi_j$, where $\Delta_j$ denotes 
a suitable index set with cardinality $|\Delta_j|=\dim S_j$. For 
convenience, we write  in the sequel bases in the form of row vectors, 
such that, for ${\bf v} = [v_k]_{k\in\Delta_j}\in \ell^2(\Delta_j)$, the 
corresponding function can simply be written as a dot product according to
\[
v_j = \Phi_j{\bf v}=\sum_{k\in\Delta_j} v_k\phi_{j,k}. 
\]
In addition, we shall assume that the single-scale 
bases $\Phi_j$ are \emph{uniformly stable}, this means that 
\[
\|{\bf v}\|_{\ell^2(\Delta_j)}
\sim \|\Phi_j{\bf v}\|_{L^2(I)}\quad\text{for all }
{\bf v}\in \ell^2(\Delta_j)
\]
uniformly in $j$, and that they satisfy the locality condition 
\[
\operatorname{diam}(\operatorname{supp}\phi_{j,k})\sim 2^{-j}.
\]

Additional properties of the spaces $S_j$ are required
for using them as trial spaces in a Galerkin scheme.
The temporal approximation spaces shall have the 
\emph{regularity} 
\[
\gamma=\sup\{s\in\mathbb{R}: S_j\subset H^s(I)\}
\]
and the 
\emph{approximation order} $d\in\mathbb{N}$, 
that is
\[
       d = \sup\Big\{s\in\mathbb{R}: \inf_{v_j\in S_j}\|v-v_j\|_{L^2(I)} 
       		\lesssim 2^{-js}\|v\|_{H^s(I)}\Big\}.
\]

Rather than using the multiresolution analysis corresponding 
to the hierarchy in \eqref{eq:hierarchy}, the pivotal idea 
of wavelets is to keep track of the increment of information 
between two consecutive levels $j-1$ and $j$. 
Since we have
$S_{j-1}\subset S_j$, we may decompose 
\[
S_j = S_{j-1}\oplus W_j,\ \text{i.e.}\ S_{j-1}\cup W_j = S_{j}\ \text{and}\  S_{j-1}\cap W_j = \{0\},
\]
with an appropriate \emph{detail space} $W_j$. Of practical interest 
is the particular choice of the basis of the detail space  $W_j$ in $S_j$. 
This basis will be denoted by
\[
  \Psi_j = \{\psi_{j,k}: k \in \nabla_j\}
\]
with the index set $\nabla_j= \Delta_j\setminus \Delta_{j-1}$.
In particular, we shall assume that the collections $\Phi_{j-1}\cup\Psi_j$
form uniformly stable bases of $S_j$, as well. If $\Psi = 
\bigcup_{j\ge 0}\Psi_j$, where $\Psi_0= \Phi_0$, is even a 
Riesz-basis of $L^2(I)$, then it is called a \emph{wavelet 
basis}. We require the functions $\psi_{j,k}$ to be 
localized with respect to the corresponding level $j$, i.e.\ 
\[
\operatorname{diam}(\operatorname{supp}\psi_{j,k}) \sim 2^{-j}, 
\]
and we normalize them such that 
\[
\|\psi_{j,k}\|_{L^2(I)}\sim 1.
\]

At first glance, it would be very convenient to deal with a single 
orthonormal system of wavelets. However, it has been shown in 
\cite{DHS1,DPS4,SchneiderBuch1998} that orthogonal wavelets are 
not optimal for the efficient approximation nonlocal operator equations. 
For this reason, we rather use \emph{biorthogonal wavelet bases}.
In this case, we also have a dual, multiresolution
analysis, i.e.\ dual single-scale bases and wavelets
\[
\widetilde{\Phi}_j = \{\widetilde{\phi}_{j,k}:k\in\Delta_j\},\quad
\widetilde{\Psi}_j = \{\widetilde{\psi}_{j,k}:k\in\nabla_j\},
\] 
respectively.
They are biorthogonal to the primal ones: there hold
the orthogonality conditions 
\[
\langle\Phi_j,\widetilde{\Phi}_j\rangle_{L^2(I)} = {\bf I},\quad
\langle\Psi_j,\widetilde{\Psi}_j\rangle_{L^2(I)} = {\bf I}. 
\]
The corresponding spaces $\widetilde{S}_j= \operatorname{span}\widetilde{\Phi}_j$
and $\widetilde{W}_j=\operatorname{span}\widetilde{\Psi}_j$ satisfy
\begin{equation}	\label{eq:space-coupling}
  S_{j-1}\perp \widetilde{W}_j, \quad \widetilde{S}_{j-1}\perp W_j.
\end{equation}
Moreover, the dual spaces are supposed to exhibit some approximation order
$\widetilde{d}\in\mathbb{N}$ and regularity $\widetilde{\gamma}>0$.

Denoting in complete analogy to the primal basis 
$\widetilde{\Psi} = \bigcup_{j\ge 0}\widetilde{\Psi}_j$, where $\widetilde{\Psi}_0 
=\widetilde{\Phi}_0$, 
then every $v \in L^2(I)$ has unique representations 
\[
v = \sum_{j\ge 0}\sum_{k\in\nabla_j} \langle v,\widetilde\psi_{j,k}\rangle_{L^2(I)}\psi_{j,k}
= \sum_{j\ge 0}\sum_{k\in\nabla_j} \langle v,\psi_{j,k}\rangle_{L^2(I)}\widetilde\psi_{j,k}
\]
such that
\[
 \|v\|_{L^2(I)}^2 \sim \sum_{j\ge 0}\sum_{k\in\nabla_j}
	\big\|\langle v,\widetilde\psi_{j,k}\rangle_{L^2(I)}\big\|_{\ell^2(\nabla_j)}^2
 \sim \sum_{j\ge 0}\sum_{k\in\nabla_j}
	\big\|\langle v,\psi_{j,k}\rangle_{L^2(I)}\big\|_{\ell^2(\nabla_j)}^2.
\]
In particular, relation \eqref{eq:space-coupling} implies that the 
wavelets exhibit {\em vanishing moments} of order $\widetilde{d}$, i.e.\
\begin{equation*}
  \big|\langle v,\psi_{j,k}\rangle_{L^2(I)}\big| \lesssim 2^{-j(1/2+\widetilde{d})}
	|v|_{W^{\widetilde{d},\infty}(\operatorname{supp}\psi_{j,k})}
\end{equation*}
for all $v \in W^{\widetilde{d},\infty}(I)$. Herein, 
the quantity $|v|_{W^{\widetilde{d},\infty}(I)}
= \|\partial^{\widetilde{d}} v\|_{L^\infty(I)}$
is the semi-norm in $W^{\widetilde{d},\infty}(I)$.
We refer to \cite{DA1} for further details.

For the compression of the mass- and stiffness matrices of 
the operator $\cH_T$ presently considered, the $\Phi_j,\widetilde\Phi_j$ 
are generated by constructing first dual pairs of single-scale bases 
on the interval $[0,1]$, using B-splines for the primal bases and 
the dual components from \cite{CDF} adapted to the interval 
\cite{DKU}. Then, we follow the construction of \cite{DS} to incorporate
the homogeneous boundary condition in $t=0$. 
In case of continuous, piecewise linear spline-wavelets, i.e. $d=2$,
we obtain the wavelets shown in 
Figure~\ref{fig:(d,td)=(2,2)}, 
providing $\widetilde{d} = 2$ vanishing moments. 
The 
detailed refinement relation of these wavelets and also of 
piecewise linear spline-wavelets with $\widetilde{d} = 4$ 
vanishing moments are found in the appendix.

\begin{figure}
\begin{center}
\includegraphics[width=0.6\textwidth]{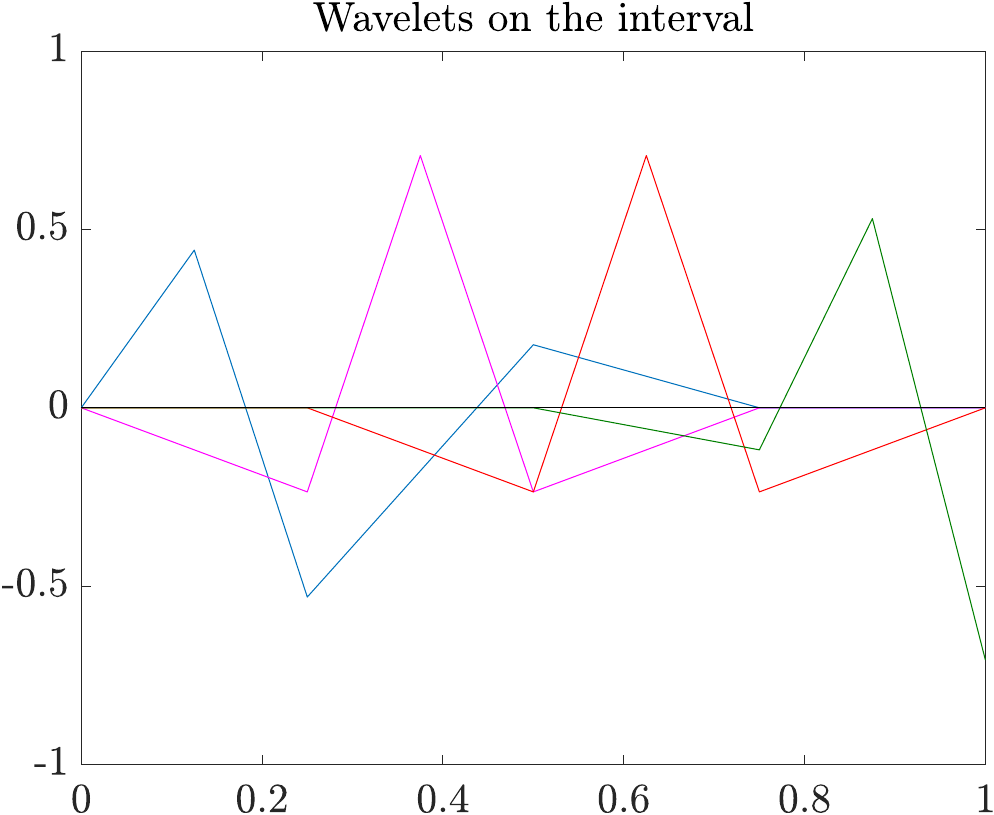}
\caption{\label{fig:(d,td)=(2,2)}
Continuous, piecewise linear wavelets on level $j=2$ 
in $I=(0,1)$
with two vanishing moments and a homogeneous boundary 
condition in $t=0$. Blue and green: boundary wavelets, 
accounting for homogeneous Dirichlet initial condition at $t=0$.
Orange and magenta: interior wavelets. }
\end{center}
\end{figure}

\section{Wavelet matrix compression}
	\label{sec:compression}
The basic ingredients in the analysis of the 
wavelet compression are decay
estimates for the matrix entries.
They are based on Calder\'{o}n--Zygmund-like
estimates of the kernel function in \eqref{HT:L2Kern} 
which we shall provide first.
They enable to apply results from \cite{DHS1} to define 
optimally compressed versions 
of the matrices ${\bf A}_j^t$ and ${\bf M}_j^t$.

\subsection{Calder\'{o}n--Zygmund estimates}
\label{sec:CZ}
For the analysis of wavelet Galerkin discretizations,
the splitting
\begin{equation} \label{eq:HTpm}
  \HT = \HT^+ + \HT^-,
\end{equation}
motivated by the kernel function in \eqref{HT:L2Kern}, will be important. 
In \eqref{eq:HTpm}, the operators $\HT^+$ and $\HT^-$ are given by
\[
  (\HT^\pm v)(t) = {\mathrm  {v.p.}} \int_0^1  v(s) \, K^\pm(s,t) \, 
  \mathrm ds, \quad t \in I,\ v \in L^2(I) \;.
\]
Here, ${\mathrm  {v.p.}}$ indicates the Cauchy principal value.
The kernel functions $K^\pm$ comprising $K$ in \eqref{HT:L2Kern} 
are given as
\begin{equation*}
  K^+(s,t) = \frac{1}{2 \sin \frac{\pi(s+t)}{2}  } 
  \quad \text{and} \quad  
  K^-(s,t) = \frac{1}{2 \sin \frac{\pi\abs{s-t}}{2}  }\;,
  \quad (s,t)\in I^2\backslash \{ s = t \}.
\end{equation*}
The property $\ln(ab) = \ln(a)+\ln(b)$ of the natural logarithm
implies in \eqref{eq:K-1} a decomposition
$K_{-1} = K_{-1}^+ + K_{-1}^-$ which corresponds to \eqref{eq:HTpm}.

The following properties of the kernels $K^\pm$ are the 
basis for the ensuing wavelet compression results.
\begin{proposition}\label{prop:CZO}
There exist constants $C,d>0$ such that
for every $(s,t) \in I^2\backslash \{ s=t \}$, and
for every $p,p'\in \IN$ hold
\begin{equation}\label{eq:CZEst}
\left| D^p_s D^{p'}_t K^\pm(s,t) \right|
\leq 
C \frac{d^{p+p'}(p+p')!}{|s-t|^{p+p'+1}} 
\;.
\end{equation}
\end{proposition}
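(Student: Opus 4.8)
The plan is to reduce the two-variable estimate \eqref{eq:CZEst} to a one-variable analyticity estimate for the scalar function
\[
  g(z) := \frac{1}{2\sin(\pi z/2)},\qquad z\in\mathbb{R}\setminus 2\mathbb{Z},
\]
and then to transfer it back. Writing $\rho(z):=\operatorname{dist}(z,2\mathbb{Z})$ (so $\rho(z)\le 1$ for every $z$), the core claim is that there exists $C_0>0$ (and in fact one may take $d=1$) with
\[
  \bigl|g^{(n)}(z)\bigr|\ \le\ \frac{C_0\,n!}{\rho(z)^{\,n+1}}\qquad\text{for all }n\in\IN_0,\ z\in\mathbb{R}\setminus 2\mathbb{Z}.
\]
Granting this, the proposition follows at once: $K^+(s,t)=g(s+t)$ identically, while $K^-(s,t)=g(s-t)$ on $\{s>t\}$ and $K^-(s,t)=g(t-s)=-g(s-t)$ on $\{s<t\}$; since $s\mapsto s\pm t$ and $t\mapsto s\pm t$ have derivative $\pm1$, the chain rule yields $D^p_sD^{p'}_tK^\pm(s,t)=\pm\,g^{(p+p')}(\sigma)$ with $\sigma:=s+t$ for $K^+$ and $\sigma:=s-t$ for $K^-$, hence (using that $g$ odd makes $|g^{(n)}|$ even)
\[
  \bigl|D^p_sD^{p'}_tK^\pm(s,t)\bigr|=\bigl|g^{(p+p')}(\sigma)\bigr|.
\]
Finally, for $(s,t)\in(0,1)^2$ with $s\ne t$ one has $\sigma=s-t\in(-1,1)$ and thus $\rho(\sigma)=|s-t|$ in the $K^-$ case, while $\sigma=s+t\in(0,2)$ and $\rho(\sigma)=\min(s+t,2-s-t)$ in the $K^+$ case, where the elementary bounds $s+t\ge|s-t|$ (valid since $s,t\ge0$) and $2-(s+t)=(1-s)+(1-t)\ge\bigl|(1-s)-(1-t)\bigr|=|s-t|$ (valid since $s,t\le1$) give $\rho(\sigma)\ge|s-t|$. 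Combining with the core claim at $n=p+p'$ gives \eqref{eq:CZEst} with $C=C_0$.

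To prove the core claim I would invoke the Mittag--Leffler partial-fraction expansion $\pi/\sin(\pi\xi)=\sum_{m\in\mathbb{Z}}(-1)^m/(\xi-m)$, evaluated at $\xi=z/2$, which yields $g(z)=\frac{1}{\pi}\sum_{m\in\mathbb{Z}}(-1)^m/(z-2m)$; termwise differentiation, legitimate since the differentiated series converges absolutely and locally uniformly on $\mathbb{C}\setminus 2\mathbb{Z}$, gives $g^{(n)}(z)=\frac{(-1)^nn!}{\pi}\sum_{m\in\mathbb{Z}}(-1)^m/(z-2m)^{n+1}$. Let $2m_0$ be the even integer nearest to $z$, so $|z-2m_0|=\rho(z)\le1$; for $m\ne m_0$ one has $|z-2m|\ge 2|m-m_0|-\rho(z)\ge|m-m_0|\ge1$. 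Hence, for $n\ge1$,
\[
  \bigl|g^{(n)}(z)\bigr|\ \le\ \frac{n!}{\pi}\Bigl(\frac{1}{\rho(z)^{n+1}}+2\sum_{\ell\ge1}\frac{1}{\ell^{n+1}}\Bigr)\ \le\ \frac{n!}{\pi}\Bigl(\frac{1}{\rho(z)^{n+1}}+\frac{\pi^2}{3}\Bigr)\ \le\ \frac{n!}{\pi}\cdot\frac{1+\pi^2/3}{\rho(z)^{n+1}},
\]
using $\rho(z)^{-(n+1)}\ge1$; for $n=0$, since $|\sin(\pi z/2)|\ge\operatorname{dist}(z,2\mathbb{Z})=\rho(z)$ (an instance of $|\sin\theta|\ge\frac{2}{\pi}\operatorname{dist}(\theta,\pi\mathbb{Z})$) one has $|g(z)|\le\frac{1}{2\rho(z)}$, covered by the same constant. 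Thus the core claim holds with $d=1$ and $C_0=(1+\pi^2/3)/\pi$. (An alternative route is Cauchy's estimate on the circle $\{|w-z|=\rho(z)/2\}$ combined with the periodicity bound $|g(w)|\le C\operatorname{dist}(w,2\mathbb{Z})^{-1}$ on the strip $|\operatorname{Im} w|\le1$, the latter obtained by removing the simple poles and using compactness.) Since in \eqref{eq:CZEst} one has $p,p'\in\IN$, only $n=p+p'\ge2$ is in fact needed.

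I expect no essential obstacle: the entire content of the statement is the one-dimensional bound on the derivatives of $1/\sin(\pi z/2)$, and everything else is chain-rule bookkeeping together with the elementary comparison $\operatorname{dist}(s\pm t,2\mathbb{Z})\ge|s-t|$ on the unit square. The one step deserving mild care is the uniform control of the tail $\sum_{m\ne m_0}$ of the partial-fraction series, but this is immediate because $\rho(z)\le1$ forces the nearest-pole term to dominate the bounded remainder; the substance is simply that $K^\pm$ is real-analytic off its poles, with the factorial (Gevrey-type) control of derivatives required by the wavelet-compression theory of \cite{DHS1}.
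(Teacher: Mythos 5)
Your proof is correct. It rests on the same basic idea as the paper's own argument --- reduce everything to the elementary identity $D_x^p(1/x)=(-1)^p p!\,x^{-p-1}$ applied near the poles of $1/\sin$, combined with the comparison $s+t\ge |s-t|$ --- but the paper's proof is only a two-line sketch, and yours supplies the step it leaves implicit, namely how the factorial derivative bounds for $1/x$ transfer to the composite function $1/\sin(\pi z/2)$. The Mittag--Leffler expansion $g(z)=\frac{1}{\pi}\sum_{m\in\mathbb{Z}}(-1)^m/(z-2m)$ is exactly the right device: it lets the paper's identity be applied termwise, with the nearest pole producing the singular factor $\rho(z)^{-(n+1)}$ and the tail summing to a constant that is absorbed because $\rho(z)\le 1$. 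You also handle a point the paper's argument silently skips: $K^+$ has a second pole along $s+t=2$ (near the corner $(1,1)$ of the square), and the inequality $|s+t|^{-q}\le|s-t|^{-q}$ invoked in the paper only controls the pole at $s+t=0$; your observation that $2-(s+t)=(1-s)+(1-t)\ge|s-t|$ closes this. The only cosmetic slip is the sign bookkeeping $D_s^pD_t^{p'}K^{\pm}=\pm\, g^{(p+p')}(\sigma)$: the correct prefactor is $(-1)^{p'}$ (resp.\ $(-1)^{p}$) depending on which side of the diagonal one is on, but this is immaterial once absolute values are taken. Your version even yields the sharper constant $d=1$ in \eqref{eq:CZEst}.
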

\begin{proof} 
The argument for $K^-$ is elementary, 
due to the explicit relation
\[
D_x^p(1/x) = (-1)^p p! x^{-p-1}.
\]
The argument for $K^+$ follows from this and
the elementary bound
\[
\forall (s,t) \in I^2:\quad
|s-t| \leq s+t \;,
\] 
since then for all $q\in \IN$ holds 
$|s+t|^{-q} \leq |s-t|^{-q}$.
\end{proof}
\begin{remark}\label{rmk:CZStr}
The Calder\'{o}n--Zygmund estimate in \eqref{eq:CZEst} is actually
stronger than what is needed in e.g.\ \cite{DHS1,SchneiderBuch1998}
and the references there to justify the ensuing matrix-compressibility
analysis. 
There, only weaker bounds of the form 
$$
\left| D^p_s D^{p'}_t K^\pm(s,t) \right|
\leq 
C_{pp'} \frac{1}{|s-t|^{p+p'+1}} 
$$
with $0\leq p,p'\leq P(d)$ and 
some (implicit) dependence of the constant
$C_{pp'}$ on $p,p'$ are sufficient for optimal
operator compression rates.

The stronger bounds \eqref{eq:CZEst} correspond to what is 
called in $\cH$-matrix analysis in \cite{Hack1,Hack2} ``asymptotic 
smoothness'' of kernels, implying in particular that also $\cH$-matrix 
formats can be used for approximate representations of $\HT^\pm$ of 
log-linear complexity, see \cite[Section~3.3]{SteinbachZankJNUM2021}.
The analytic derivative bounds \eqref{eq:CZEst} are also essential
for exponentially convergent numerical quadrature methods for 
the numerical evaluation of the nonzero matrix entries.

The explicit expression for the kernel derivatives of $K^-$ implies that 
the order of $|s-t|$ in the kernel estimate \eqref{eq:CZEst} is sharp.
\end{remark}

\subsection{Matrix compression}
\label{sec:MatCmpr}
The basic ingredients in the analysis of the compression 
procedure are estimates for the matrix entries $\langle B\psi_{j',k'},
\psi_{j,k} \rangle_{L^2(I)}$ with $k\in\nabla_j$, $k'\in\nabla_{j'}$
and $j,j'>0$, where $B \colon \, H_{0,}^q(I)\to [H_{0,}^q(I)]'$
is an integral operator of order $2q$ with $q \in [0,1]$. In 
the present application, we are interested in the operators 
$\langle \HT \psi_{j',k'},\partial_t \psi_{j,k} \rangle_{L^2(I)}$, which
is of order $q=1/2$, and $\langle \HT \psi_{j',k'},\psi_{j,k} \rangle_{L^2(I)}$,
which is of order $q=0$. However, our results apply also to the operator 
$\langle \HT \partial_t \psi_{j',k'},\partial_t \psi_{j,k}\rangle_{L^2(I)}$,
which is of order $q = 1$ and which would appear in the discretization 
of the wave equation.

The supports of the wavelets will be denoted by
\begin{equation}	\label{eq:def supp}
   \Xi_{j,k}  = \operatorname{supp}(\psi_{j,k}) \subset \overline{I}.
\end{equation}
Moreover, the so-called ``second compression''
involves the {\emph singular support\/} of a wavelet $\psi_{j,k}$,
\begin{equation}	\label{eq:def supp'}
   \Xi_{j,k} ^s = \operatorname{sing\;supp}\psi_{j,k}
   	\subset \Xi_{j,k} .
\end{equation}
It consists of all points of the wavelets' support, 
where $\psi_{j,k}$ is not smooth.
A proof of the following estimates for the matrix entries
is based on the Calder\'{o}n--Zygmund estimates \eqref{eq:CZEst} 
and can be found e.g.~in \cite[Chap.~8]{SchneiderBuch1998},
see also \cite{RSt2003} for related results.
\begin{theorem}  \label{thm:1st estimate}
Suppose $j,j'\ge 0$ and let $k\in\nabla_j$, $k'\in\nabla_{j'}$ 
be such that
\begin{equation*}
  \operatorname{dist}(\Xi_{j,k} ,\Xi_{j',k'} ) > 0.
\end{equation*}
Then, one has
\[
  |\langle B\psi_{j',k'},\psi_{j,k}\rangle_{L^2(I)}|
  \lesssim 
  \frac{2^{-(j+j')(\widetilde{d}+1/2)}}
	{\operatorname{dist}(\Xi_{j,k} ,\Xi_{j',k'} )^{1+2q+2\widetilde{d}}}
\]
uniformly with respect to $j$ and $j'$.
\end{theorem}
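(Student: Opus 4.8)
The plan is to estimate the bilinear pairing by exploiting the vanishing moments of both wavelets together with the Calder\'{o}n--Zygmund decay of the kernel derivatives from Proposition~\ref{prop:CZO}. Since $B$ splits as $\HT^+ + \HT^-$ (or the corresponding differentiated operators), and since both pieces obey \eqref{eq:CZEst}, I would work with a single kernel $\widetilde K(s,t)$ satisfying $|D_s^p D_t^{p'}\widetilde K(s,t)|\lesssim d^{p+p'}(p+p')!/|s-t|^{p+p'+1}$ away from the diagonal, keeping $p+p'$ bounded by $2\widetilde d$ throughout; after the differentiations hidden in ``order $2q$'' are absorbed (they raise the kernel singularity from $|s-t|^{-1}$ to $|s-t|^{-1-2q}$, which is the source of the $2q$ in the exponent), the remaining kernel is still $C^\infty$ on $\Xi_{j,k}\times\Xi_{j',k'}$ because the supports are disjoint.

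The core step is a standard double Taylor-expansion argument. On the tensor support $\Xi_{j,k}\times\Xi_{j',k'}$ the kernel is smooth, so I expand $\widetilde K(s,t)$ about centers $s_0\in\Xi_{j',k'}$, $t_0\in\Xi_{j,k}$ up to order $\widetilde d - 1$ in each variable. The polynomial parts of the expansion are annihilated when paired against $\psi_{j',k'}$ in $s$ and against $\psi_{j,k}$ in $t$, because each wavelet has $\widetilde d$ vanishing moments; only the two Taylor remainders survive. Each remainder carries a factor $(\operatorname{diam}\Xi_{j',k'})^{\widetilde d}\sim 2^{-j'\widetilde d}$ respectively $(\operatorname{diam}\Xi_{j,k})^{\widetilde d}\sim 2^{-j\widetilde d}$, and a kernel derivative of total order $2\widetilde d$ (after the $2q$-order differentiation), bounded by $C/\operatorname{dist}(\Xi_{j,k},\Xi_{j',k'})^{1+2q+2\widetilde d}$ uniformly over the supports since the distance controls $|s-t|$ from below. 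Multiplying by the $L^1$-sizes of the wavelets, which by the normalization $\|\psi_{j,k}\|_{L^2}\sim 1$ and $\operatorname{diam}\Xi_{j,k}\sim 2^{-j}$ are $\|\psi_{j,k}\|_{L^1}\lesssim 2^{-j/2}$ and likewise $\|\psi_{j',k'}\|_{L^1}\lesssim 2^{-j'/2}$, collects the factor $2^{-(j+j')(1/2+\widetilde d)}$ in the numerator and yields exactly the claimed bound. Uniformity in $j,j'$ is automatic since every constant depends only on $\widetilde d$, $q$, and the Calder\'{o}n--Zygmund constants $C,d$.

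The main technical obstacle is bookkeeping the order-$2q$ differentiation cleanly. The operator $B=\HT$ in the stiffness-matrix case is paired as $\langle \HT\psi_{j',k'},\partial_t\psi_{j,k}\rangle$, i.e.\ one derivative lands on the test wavelet; in the mass-matrix case $q=0$ and no derivative appears; in the wave-equation case $q=1$ and one derivative lands on each factor. In each scenario one must check that the derivative falling on a wavelet is compatible with that wavelet still having $\widetilde d$ vanishing moments (which holds, since $\partial_t\psi_{j,k}$ is a linear combination of lower-order wavelet-type bumps on the same support, or one simply integrates by parts to move the derivative onto the smooth kernel, turning $\partial_t\psi_{j,k}$-pairings into $K$-derivative pairings and raising the kernel singularity order to $1+2q$ as recorded above). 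This is exactly the passage from Proposition~\ref{prop:CZO} to the hypotheses of the abstract compression machinery of \cite{DHS1,SchneiderBuch1998}, and once it is set up the remaining estimate is the routine Taylor/vanishing-moment computation sketched above; I would therefore present the derivation of \eqref{eq:CZEst} carefully and then cite \cite{SchneiderBuch1998} for the mechanical remainder, exactly as the excerpt already signals.
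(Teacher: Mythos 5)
Your argument is correct and coincides with the paper's approach: the authors prove only the Calder\'{o}n--Zygmund bounds of Proposition~\ref{prop:CZO} and then delegate to \cite{SchneiderBuch1998} exactly the double Taylor-expansion/vanishing-moment computation you spell out (two factors $2^{-j(\widetilde d+1/2)}$, $2^{-j'(\widetilde d +1/2)}$ from the moment conditions and $L^1$-normalization, one factor $\operatorname{dist}^{-1-2q-2\widetilde d}$ from the mixed derivative of order $2\widetilde d$ of the order-$2q$ kernel). The one detail worth making explicit in your handling of the $q=1/2$ case is that the integration by parts produces no boundary contribution, since $(\HT v)(1)=0$ for every $v$ (indeed $K(s,1)=0$, consistent with $\HT$ mapping into $H^s_{,0}(I)$) and the wavelets vanish at $t=0$.
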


\begin{theorem}  \label{thm:2nd estimate}
Suppose that $j' > j \ge 0$. Then, for $k\in\nabla_j$, $k'\in\nabla_{j'}$, 
the coefficients $\langle B\psi_{j',k'},\psi_{j,k}\rangle_{L^2(I)}$ and
$\langle B\psi_{j,k},\psi_{j',k'}\rangle_{L^2(I)}$ satisfy
\[
  |\langle B\psi_{j',k'},\psi_{j,k}\rangle_{L^2(I)}|,\
  |\langle B\psi_{j,k},\psi_{j',k'}\rangle_{L^2(I)}|
  \lesssim \frac{2^{j/2}2^{-j'(\widetilde{d}+1/2)}}
	{\operatorname{dist}(\Xi_{j,k} ^s,\Xi_{j',k'} )^{2q+\widetilde{d}}}
\]
uniformly with respect to $j$ and $j'$, provided that
$\operatorname{dist}(\Xi_{j,k} ^s,\Xi_{j',k'} ) \gtrsim 2^{-j'}$.
\end{theorem}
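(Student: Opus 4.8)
The plan is to establish the two bounds by reducing them to the standard wavelet-compression estimates for Calderón--Zygmund operators, using Proposition~\ref{prop:CZO} as the kernel input and the MRA properties (locality, normalization $\|\psi_{j,k}\|_{L^2(I)}\sim1$, and $\widetilde d$ vanishing moments) assembled in Section~\ref{sec:wavelets}. The key observation is that the operator $B$ in question, of order $2q$ with $q\in\{0,1/2,1\}$, is built from $\HT$ (of order $0$, i.e.\ $q=0$) composed with at most one temporal derivative on each side; in all cases the distributional kernel of $B$ is, up to differentiation of the wavelets, the kernel $K(s,t)=K^+(s,t)+K^-(s,t)$ of $\HT$, which by \eqref{eq:CZEst} satisfies $|D_s^p D_t^{p'}K(s,t)|\lesssim |s-t|^{-(p+p'+1)}$ for $p,p'$ up to the order dictated by $d$. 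The derivatives falling on $\psi_{j,k}$ only improve decay (each $\partial_t$ costs a factor $2^{j}$ in the normalization but is compensated in the order count), so without loss of generality one argues for the model situation and tracks the exponent $2q$ as a bookkeeping parameter.

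First, for Theorem~\ref{thm:1st estimate} (the ``first compression''), I would take the two wavelets with disjoint supports, write
\[
\langle B\psi_{j',k'},\psi_{j,k}\rangle_{L^2(I)}
=\int_{\Xi_{j,k}}\int_{\Xi_{j',k'}} \widetilde K(s,t)\,\psi_{j',k'}(s)\,\psi_{j,k}(t)\,\dd s\,\dd t,
\]
where $\widetilde K$ is the (smooth, since supports are separated) kernel of $B$. Then I would apply the vanishing-moment cancellation on \emph{both} variables: subtract the Taylor polynomial of $\widetilde K(\cdot,t)$ of degree $\widetilde d-1$ around the center of $\Xi_{j',k'}$, and likewise in $t$ around the center of $\Xi_{j,k}$. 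Each cancellation produces a factor $(\operatorname{diam}\Xi_{j',k'})^{\widetilde d}\sim 2^{-j'\widetilde d}$ resp.\ $2^{-j\widetilde d}$ times a derivative of $\widetilde K$ of total order $2\widetilde d$, which by the Calderón--Zygmund bound \eqref{eq:CZEst} (here with $p+p'=2\widetilde d$ plus the $2q$ extra derivatives from the order of $B$) is $\lesssim \operatorname{dist}(\Xi_{j,k},\Xi_{j',k'})^{-(1+2q+2\widetilde d)}$. Combining with $\|\psi_{j,k}\|_{L^1}\lesssim 2^{-j/2}$ (from $\|\psi_{j,k}\|_{L^2}\sim1$ and $|\Xi_{j,k}|\sim 2^{-j}$) and likewise for $\psi_{j',k'}$ gives the prefactor $2^{-j/2}2^{-j'/2}\cdot 2^{-j\widetilde d}2^{-j'\widetilde d}=2^{-(j+j')(\widetilde d+1/2)}$, which is exactly the claimed bound; I would cite \cite[Chapter~8]{SchneiderBuch1998} (or \cite{DHS1}) for the packaging of this standard argument, since it is verbatim the pseudodifferential case once \eqref{eq:CZEst} is in hand.

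Second, for Theorem~\ref{thm:2nd estimate} (the ``second compression''), the finer-level wavelet $\psi_{j',k'}$ may overlap the \emph{support} of the coarser $\psi_{j,k}$ but, by hypothesis, stays at distance $\gtrsim 2^{-j'}$ from its \emph{singular} support $\Xi_{j,k}^s$. The idea is then to use cancellation only on the fine wavelet $\psi_{j',k'}$ (full $\widetilde d$ vanishing moments, gaining $2^{-j'\widetilde d}$ and $\widetilde d$ derivatives of the kernel) while on the coarse side one integrates by parts $\widetilde d$ times moving derivatives onto $\psi_{j,k}$ — legitimate away from $\Xi_{j,k}^s$, where $\psi_{j,k}$ is a polynomial and whose derivatives of order $\le d-1$ are bounded — picking up only the locality scale and no boundary terms because the singular-support region is excluded. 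One then integrates the resulting kernel bound $\lesssim \operatorname{dist}(\Xi_{j,k}^s,\Xi_{j',k'})^{-(2q+\widetilde d)}$ (note one power of the distance is absorbed by the one-dimensional integration over the support of $\psi_{j',k'}$, whose length is $\sim 2^{-j'}\lesssim \operatorname{dist}$, which is exactly where the hypothesis $\operatorname{dist}(\Xi_{j,k}^s,\Xi_{j',k'})\gtrsim 2^{-j'}$ is used) against $\|\psi_{j',k'}\|_{L^1}\lesssim 2^{-j'/2}$ and $\|\psi_{j,k}\|_{L^\infty}\lesssim 2^{j/2}$, yielding the prefactor $2^{j/2}2^{-j'(\widetilde d+1/2)}$. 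The symmetric estimate for $\langle B\psi_{j,k},\psi_{j',k'}\rangle$ follows identically upon observing that $B$ and its formal transpose have kernels with the same Calderón--Zygmund bounds (indeed $K^\pm$ are symmetric up to the harmless $|s-t|$ vs.\ $(s+t)$ comparison already exploited in Proposition~\ref{prop:CZO}).

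The main obstacle I anticipate is \emph{not} the abstract pseudodifferential-compression machinery — that is classical and directly applicable once \eqref{eq:CZEst} is available — but rather two points of bookkeeping specific to this operator: (i) verifying that the composition with $\partial_t$ on one or both sides genuinely only shifts the order parameter $q$ by the expected amount without destroying the kernel structure (one must be careful that $\partial_t\HT$ is again an operator with a Calderón--Zygmund kernel of the appropriate order, which follows from the relations recalled in Subsection~\ref{sec:HT}, e.g.\ $\partial_t\HT u=-\HT^{-1}\partial_t u$, together with \eqref{eq:CZEst} applied to $K_{-1}$), and (ii) handling the interval endpoints $t=0,1$, where $K^+$ and $K^-$ in \eqref{HT:L2Kern} have additional singularities $1/\sin(\pi(s+t)/2)$ that are \emph{not} captured by $|s-t|^{-1}$ near the corner $s=t=0$ — but here the boundary-adapted wavelet construction of \cite{DS} places vanishing-moment or boundary conditions precisely so that the relevant matrix entries near $t=0$ are still controlled; I would address this either by the comparison $|s-t|\le s+t$ already used in Proposition~\ref{prop:CZO} (which shows the $K^+$ contribution is dominated by the $K^-$ one in the regime that matters) or by noting that the boundary wavelets in Figure~\ref{fig:(d,td)=(2,2)} are supported away from $t=1$ and carry the homogeneous condition at $t=0$. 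With these two checks the statements reduce cleanly to \cite[Chapters~7--8]{SchneiderBuch1998}.
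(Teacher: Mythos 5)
Your proposal is correct and follows essentially the same route as the paper, which itself gives no self-contained proof of this statement but defers to the classical second-compression argument of \cite{DHS1,SchneiderBuch1998}, with Proposition~\ref{prop:CZO} supplying the required Calder\'on--Zygmund kernel bounds. Your sketch (vanishing moments of the fine-level wavelet combined with the polynomial structure of the coarse wavelet away from its singular support, plus the two operator-specific checks concerning $\partial_t$ and the $K^+$ corner singularity) is precisely the content of the cited references as applied here.
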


Based on Theorems \ref{thm:1st estimate} and 
\ref{thm:2nd estimate}, we arrive at the following 
theorem, which defines a compressed version of the matrix 
${\bf B}_j = [\langle B\phi_{j,k'},\phi_{j,k}\rangle_{L^2(I)}]_{k,k'\in\Delta_j}$
when expressed in wavelet coordinates.

\begin{theorem}[A-priori compression \cite{DHS1}]\label{thm:a-priori}
Let the supports $\Xi_{j,k}$ and $\Xi_{j,k}^s$ be given as in
\eqref{eq:def supp} and \eqref{eq:def supp'} and define the 
compressed system matrix ${\bf B}_j^c$, corresponding to 
the discretization of the integral operator $B$ with respect to 
the trial space $S_j$, by
\begin{equation}		\label{eq:a-priori}
  [{\bf B}_j^c]_{(\ell,k),(\ell,',k')}
  = \begin{cases}
  \qquad \quad 0,& \operatorname{dist}(\Xi_{\ell,k} ,\Xi_{\ell',k'} ) 
  	> \mathcal{B}_{\ell,\ell'}\ \text{and $\ell,\ell'\ge 0$}, \\
  \qquad \quad 0,& \operatorname{dist}(\Xi_{\ell,k} ,\Xi_{\ell',k'} ) 
  	\le 2^{-\min\{\ell,\ell'\}}\ \text{and}\\
                 &\operatorname{dist}(\Xi_{\ell,k} ^s,\Xi_{\ell',k'} ) >
		 \mathcal{B}_{\ell,\ell'}^s\ \text{if $\ell' > \ell\geq 0$}, \\
		 & \operatorname{dist}(\Xi_{\ell,k} ,\Xi_{\ell',k'} ^s) >
		 \mathcal{B}_{\ell,\ell'}^s\ \text{if $\ell > \ell'\geq 0$}, \\
  \langle B\psi_{\ell',k'},\psi_{\ell,k}\rangle_{L^2(I)}, &\text{otherwise}.
  \end{cases}
\end{equation}
For given, fixed parameters 
\begin{equation*}
  a > 1, \qquad d < \delta < \widetilde{d} + 2q,
\end{equation*}
for all $\ell,\ell' \geq 0$ 
the matrix compression parameters 
$\mathcal{B}_{\ell,\ell'}$ 
and 
$\mathcal{B}_{\ell,\ell'}^s$ 
are set according to
\begin{equation}	\label{eq:cut-off parameters}
  \begin{aligned}
  \mathcal{B}_{\ell,\ell'} = a \phantom{'}\max\left\{ 2^{-\min\{\ell,\ell'\}},
	2^{\frac{2j(\delta-q)-(\ell+\ell')(\delta+\widetilde{d})}{2(\widetilde{d}+q)}}\right\}, \\
  \mathcal{B}_{\ell,\ell'}^s = a \max\left\{ 2^{-\max\{\ell,\ell'\}},
	2^{\frac{2j(\delta-q)-(\ell+\ell')\delta-\max\{\ell,\ell'\}
	\widetilde{d}}{\widetilde{d}+2q}}\right\}.
  \end{aligned}
\end{equation}
With the compression according to \eqref{eq:a-priori}--\eqref{eq:cut-off parameters}, 
the compressed system matrices ${\bf B}_j^c$ have only $\mathcal{O}(N_j)$ 
nonzero entries. 
In addition, the error estimates
\begin{equation*}
    \|u-u_j\|_{H^{q-t}(I)} \lesssim 2^{j(d-q+t)}\|u\|_{H^d(I)},
    	\quad \text{for all $0\le t\le d-q$}
\end{equation*}
hold for the solution $u_j$ of the compressed Galerkin 
system ${\bf B}_j^c{\bf u}_j = {\bf f}_j$ provided that 
$u = B^{-1}f \in H^d(I)$.
\end{theorem}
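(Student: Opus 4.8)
The assertion is an instance of the general a-priori compression theory for wavelet Galerkin discretizations of nonlocal operators with Calder\'{o}n--Zygmund kernels developed in \cite{DHS1} (see also \cite{SchneiderBuch1998}); it remains to indicate how the hypotheses of that theory are met in the present situation and to recall the structure of the argument. Those hypotheses are: (i) $\Psi$ is a biorthogonal wavelet basis with primal and dual orders $d,\widetilde{d}$ and regularities $\gamma,\widetilde{\gamma}$ as in Section~\ref{sec:wavelets}, large enough that $\Psi$ is a Riesz basis of $H^s(I)$ for all $|s|\le q$ --- this is guaranteed by the construction of Section~\ref{sec:wavelets}; (ii) $B$ is continuous and boundedly invertible as a map $H_{0,}^q(I)\to[H_{0,}^q(I)]'$ --- this follows from the mapping properties of $\HT$ recorded in Section~\ref{sec:HT}; and (iii) the kernel decay estimates of Theorems~\ref{thm:1st estimate}--\ref{thm:2nd estimate}, which rest on Proposition~\ref{prop:CZO} applied to $\HT^+$ and $\HT^-$ via the splitting \eqref{eq:HTpm}. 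Given (i)--(iii), the proof decomposes into a complexity count and a consistency estimate, combined by a Strang-type perturbation argument.

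\emph{Complexity.} Fix levels $0\le\ell,\ell'\le j$. By locality, $\operatorname{diam}\Xi_{\ell,k}\sim 2^{-\ell}$ and $|\nabla_\ell|\sim 2^{\ell}$, so for a fixed $\psi_{\ell,k}$ the number of $k'\in\nabla_{\ell'}$ with $\operatorname{dist}(\Xi_{\ell,k},\Xi_{\ell',k'})\le\mathcal{B}_{\ell,\ell'}$ is $\lesssim 2^{\ell'}\mathcal{B}_{\ell,\ell'}+1$; hence the first-compression part of the $(\ell,\ell')$-block retains $\lesssim 2^{\ell}\big(2^{\ell'}\mathcal{B}_{\ell,\ell'}+1\big)$ entries. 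Since $\operatorname{sing\;supp}\psi_{\ell,k}$ is a finite point set, the analogous count for the second-compression part, governed by $\Xi_{\ell,k}^s$ and $\mathcal{B}_{\ell,\ell'}^s$, gives $\lesssim 2^{\max\{\ell,\ell'\}}\big(2^{\min\{\ell,\ell'\}}\mathcal{B}_{\ell,\ell'}^s+1\big)$. Inserting the explicit cut-off parameters \eqref{eq:cut-off parameters} and summing the resulting geometric series over $0\le\ell,\ell'\le j$ --- the exponents in \eqref{eq:cut-off parameters} being calibrated precisely so that each double sum is dominated by its diagonal and endpoint contributions --- yields $\mathcal{O}(2^j)=\mathcal{O}(N_j)$ nonzero entries. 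It is the second compression that lowers this count from $\mathcal{O}(N_j\log N_j)$ to the optimal $\mathcal{O}(N_j)$.

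\emph{Consistency.} Let ${\bf E}_j:={\bf B}_j-{\bf B}_j^c$; its nonzero entries are exactly those discarded in \eqref{eq:a-priori}, hence are estimated by Theorem~\ref{thm:1st estimate} in the regime where two wavelet supports are far apart, and by Theorem~\ref{thm:2nd estimate} in the regime where the supports are close but a singular support is far. Passing to the diagonally preconditioned matrix $\widehat{\bf E}_j:={\bf D}_j^{-1}{\bf E}_j{\bf D}_j^{-1}$ with $[{\bf D}_j]_{(\ell,k),(\ell,k)}=2^{q\ell}$ --- so that, by (i), the rescaled bilinear form on $S_j$ is equivalent to the $H_{0,}^q(I)$-energy form --- I would bound $\|\widehat{\bf E}_j\|_{\ell^2\to\ell^2}$ by the Schur lemma. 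For a fixed row index $(\ell,k)$ one sums $2^{-q(\ell+\ell')}\,\big|[{\bf E}_j]_{(\ell,k),(\ell',k')}\big|$ over the discarded indices $(\ell',k')$; for each fixed $\ell'$ the inner sum over $k'$ is, by the decay estimates, controlled by a geometric series in the dyadic distances starting at $\mathcal{B}_{\ell,\ell'}$ (respectively $\mathcal{B}_{\ell,\ell'}^s$), and the remaining sum over $\ell'$ converges because $\delta<\widetilde{d}+2q$; the column sums are estimated symmetrically. The net result is a bound of order $2^{-j(\delta-q)}$ for $\|\widehat{\bf E}_j\|$ relative to the energy norm.

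\emph{Conclusion and main obstacle.} The uncompressed Galerkin scheme being stable and quasi-optimal, and $q\le d<\delta$ rendering the perturbation $2^{-j(\delta-q)}$ both $o(1)$ (so that the compressed matrix remains boundedly invertible on each level) and of strictly higher order in $j$ than the uncompressed energy-norm discretization error $\lesssim 2^{-j(d-q)}\|u\|_{H^d(I)}$, a Strang lemma yields the energy-norm ($t=0$) estimate; the full range $0\le t\le d-q$ then follows by the customary temporal inverse inequality and an Aubin--Nitsche duality argument. The delicate point is the consistency estimate: the two-region (``first'' and ``second'' compression) bookkeeping must interlock exactly with the shape of the cut-off parameters \eqref{eq:cut-off parameters} so that \emph{all} Schur row and column sums close at the rate $2^{-j(\delta-q)}$ while the nonzero count simultaneously stays at $\mathcal{O}(N_j)$ --- precisely the tension resolved by the constraint $d<\delta<\widetilde{d}+2q$.
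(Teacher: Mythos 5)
Your plan is correct and coincides with the paper's treatment: the paper does not prove Theorem~\ref{thm:a-priori} but cites it from \cite{DHS1}, and your outline (complexity count for the cut-off rules, Schur-lemma consistency bound of order $2^{-j(\delta-q)}$ for the diagonally rescaled error matrix, Strang perturbation plus Aubin--Nitsche for the negative-order norms) is exactly the argument of that reference, with the hypotheses supplied by Sections~\ref{sec:wavelets}--\ref{sec:compression} and Theorems~\ref{thm:1st estimate}--\ref{thm:2nd estimate} as you indicate. One bookkeeping slip: in the second-compression count the roles of $\min$ and $\max$ are transposed --- the singular support belongs to the \emph{coarser} wavelet, so the block count is $2^{\min\{\ell,\ell'\}}\bigl(2^{\max\{\ell,\ell'\}}\mathcal{B}^s_{\ell,\ell'}+1\bigr)$, whose remainder term sums to $\mathcal{O}(2^j)$, whereas the $2^{\max\{\ell,\ell'\}}$ remainder you wrote would sum to $\mathcal{O}(j\,2^j)$ and would not by itself deliver the claimed $\mathcal{O}(N_j)$.
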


\section{Numerical experiments}
\label{Sec:NumExp}

\subsection{1D test example}
\label{sec:NumExp1d}
In order to demonstrate the wavelet method and to
validate its accuracy, we consider the ordinary differential equation 
\begin{equation}\label{eq:ODE}
  \partial_t u(t) + \mu u(t) = f(t)\quad\text{for $t\in I=(0,T)$},\quad u(0) = 0
\end{equation}
and choose $T=2$, $\mu=10$, and the right-hand side $f(t)$ such that
\[
  u(t) = -2\sin\bigg(\frac{3\pi}{4}t\bigg) + \sin\bigg(\frac{9\pi}{4}t\bigg)
\]
becomes the known solution to be determined.

\begin{table}[hbt]
\begin{tabular}{|c|c|c|cc|cc|cc|}\hline
$j$ & $N_j^t$ & \% & \multicolumn{2}{c|}{$L^2(I)$-error} 
& \multicolumn{2}{c|}{$H^1_{0,}(I)$-error}
& \multicolumn{2}{c|}{$H^{1/2}_{0,}(I)$-error} \\ \hline
4 & 16 & 98.62 & $3.28\cdot 10^{-2}$ &          & 1.88  &            & $2.48\cdot 10^{-1}$ &          \\ 
5 & 32 & 84.57 & $7.64\cdot 10^{-3}$ & (2.10) & $9.28\cdot 10^{-1}$ & (1.02) & $8.42\cdot 10^{-2}$ & (1.56) \\
6 & 64 & 62.32 & $1.87\cdot 10^{-3}$ & (2.03) & $4.62\cdot 10^{-1}$ & (1.00) & $2.94\cdot 10^{-2}$ & (1.52) \\
7 & 128 & 42.09 & $4.67\cdot 10^{-4}$ & (2.01) & $2.31\cdot 10^{-1}$ & (1.00) & $1.04\cdot 10^{-2}$ & (1.50) \\
8 & 256 & 26.76 & $1.17\cdot 10^{-4}$ & (2.00) & $1.15\cdot 10^{-1}$ & (1.00) & $3.67\cdot 10^{-3}$ & (1.50) \\
9 & 512 & 16.27 & $2.91\cdot 10^{-5}$ & (2.00) & $5.77\cdot 10^{-2}$ & (1.00) & $1.30\cdot 10^{-3}$ & (1.50) \\
10 & 1024 & 9.58 & $7.28\cdot 10^{-6}$ & (2.00) & $2.89\cdot 10^{-2}$ & (1.00) & $4.58\cdot 10^{-4}$ & (1.50) \\
11 & 2048 & 5.50 & $1.82\cdot 10^{-6}$ & (2.00) & $1.44\cdot 10^{-2}$ & (1.00) & $1.62\cdot 10^{-4}$ & (1.50)\\
12 & 4096  & 3.09 & $4.55\cdot 10^{-7}$ & (2.00) & $7.21\cdot 10^{-3}$ & (1.00) & $5.73\cdot 10^{-5}$ & (1.50)\\
13 & 8192 & 1.71 & $1.14\cdot 10^{-7}$ & (2.00) & $3.61\cdot 10^{-3}$ & (1.00) & $2.03\cdot 10^{-5}$ & (1.50)\\\hline
\end{tabular}
\caption{\label{tab:1DA}
Results for the analytical example \eqref{eq:ODE} 
on the interval for piecewise linear wavelets 
with $\tilde{d}=2$ vanishing moments.}
\end{table}

\begin{table}[hbt]
\begin{tabular}{|c|c|c|cc|cc|cc|}\hline
$j$ & $N_j^t$ & \% & \multicolumn{2}{c|}{$L^2(I)$-error} 
& \multicolumn{2}{c|}{$H^1_{0,}(I)$-error}
& \multicolumn{2}{c|}{$H^{1/2}_{0,}(I)$-error} \\ \hline
5 & 32 & 96.51 & $7.63\cdot 10^{-3}$ &          & $9.28\cdot 10^{-1}$ &             & $8.42\cdot 10^{-2}$ &         \\
6 & 64 & 78.37 & $1.87\cdot 10^{-3}$ & (2.03) & $4.62\cdot 10^{-1}$ & (1.00) & $2.94\cdot 10^{-2}$ & (1.52) \\
7 & 128 & 56.14 & $4.66\cdot 10^{-4}$ & (2.01) & $2.31\cdot 10^{-1}$ & (1.00) & $1.04\cdot 10^{-2}$ & (1.50) \\
8 & 256 & 37.19 & $1.16\cdot 10^{-4}$ & (2.00) & $1.15\cdot 10^{-1}$ & (1.00) & $3.67\cdot 10^{-3}$ & (1.50) \\
9 & 512 & 23.22 & $2.91\cdot 10^{-5}$ & (2.00) & $5.77\cdot 10^{-2}$ & (1.00) & $1.30\cdot 10^{-3}$ & (1.50) \\
10 & 1024 & 13.90 & $7.28\cdot 10^{-6}$ & (2.00) & $2.89\cdot 10^{-2}$ & (1.00) & $4.58\cdot 10^{-4}$ & (1.50) \\
11 & 2048 & 8.06 & $1.82\cdot 10^{-6}$ & (2.00) & $1.44\cdot 10^{-2}$ & (1.00) & $1.62\cdot 10^{-4}$ & (1.50)\\
12 & 4096 & 4.57 & $4.55\cdot 10^{-7}$ & (2.00) & $7.21\cdot 10^{-3}$ & (1.00) & $5.73\cdot 10^{-5}$ & (1.50)\\
13 & 8192 & 2.54 & $1.14\cdot 10^{-7}$ & (2.00) & $3.61\cdot 10^{-3}$ & (1.00) & $2.03\cdot 10^{-5}$ & (1.50)\\\hline
\end{tabular}
\caption{\label{tab:1DB}
Results for the analytical example \eqref{eq:ODE} 
on the interval for piecewise linear wavelets
with $\tilde{d}=4$ vanishing moments.}
\end{table}

\begin{figure}[htb]
\begin{center}
\pgfplotsset{width=0.60\textwidth, height=0.50\textwidth}
\begin{tikzpicture}
\begin{axis}[grid, ymin= 0, ymax = 60, xmin = 0, xmax = 4100, ytick={10,30,50},
	legend style={at={(0.98,0.68)},anchor=east},
	xtick={0,1000,2000,3000,4000}, ylabel={condition number}, xlabel={degrees of freedom}]
\addplot[line width=0.7pt,color=red,mark=*] table[x=dof,y=cond1]{
dof cond10 cond1 cond100
 8  4.2814 4.8998 16.9206
 16  5.3364 6.3900 18.5658
 32  6.9383 7.5467 20.7352
 64  8.5190 8.4374 25.1417
 128  9.9160 9.1268 31.7880
 256  11.0957 9.6654 38.9254
 512  12.0713 10.0908 45.5147
 1024  12.8703 10.4309 51.2215
 2048  13.5223 10.7056 56.0126
 4096  14.0545 10.9300 59.9721
};
\addlegendentry{$\mu=1$};
\addplot[line width=0.7pt,color=blue,mark=*] table[x=dof,y=cond10]{
dof cond10 cond1 cond100
8  4.2814 4.8998 16.9206
 16  5.3364 6.3900 18.5658
 32  6.9383 7.5467 20.7352
 64  8.5190 8.4374 25.1417
 128  9.9160 9.1268 31.7880
 256  11.0957 9.6654 38.9254
 512  12.0713 10.0908 45.5147
 1024  12.8703 10.4309 51.2215
 2048  13.5223 10.7056 56.0126
 4096  14.0545 10.9300 59.9721
 };
\addlegendentry{$\mu=10$};
\addplot[line width=0.7pt,color=dgreen,mark=*] table[x=dof,y=cond100]{
dof cond10 cond1 cond100
 8  4.2814 4.8998 16.9206
 16  5.3364 6.3900 18.5658
 32  6.9383 7.5467 20.7352
 64  8.5190 8.4374 25.1417
 128  9.9160 9.1268 31.7880
 256  11.0957 9.6654 38.9254
 512  12.0713 10.0908 45.5147
 1024  12.8703 10.4309 51.2215
 2048  13.5223 10.7056 56.0126
 4096  14.0545 10.9300 59.9721
};
\addlegendentry{$\mu=100$};
\end{axis}
\end{tikzpicture}
\caption{\label{fig:cond}Condition numbers of the diagonally 
scaled system matrix ${\bf A}_j^t+\mu {\bf M}_j^t$ in wavelet 
coordinates for $\mu = 1, 10, 100$.}
\end{center}
\end{figure}
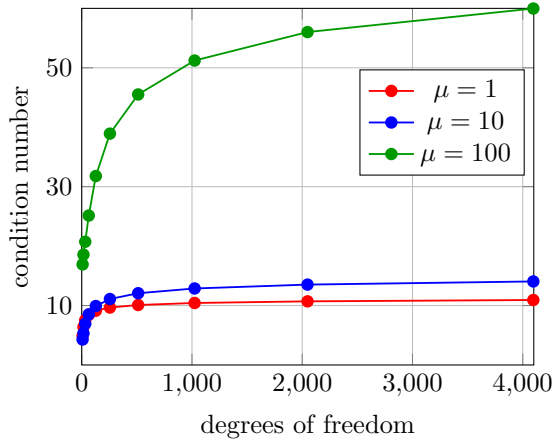

In Table~\ref{tab:1DA} and Table~\ref{tab:1DB}, we find the results 
when discretizing \eqref{eq:ODE} by a Galerkin method using wavelet 
matrix compression with $\tilde{d} = 2$ and $\tilde{d} = 4$ vanishing 
moments, respectively. The respective compression rates for the 
associated system matrices ($\operatorname{nnz}({\bf A}_j^t+\mu 
{\bf M}_j^t)/N^2$ in percent) are found in the column entitled ``\%''. 
Although the system matrices are quite sparse, we achieve the 
optimal rate of convergence of order 2 in $L^2(I)$, of order 1 in
$H^1_{0,}(I)$, and of order $3/2$ in $H^{1/2}_{0,}(I)$.
In particular, the accuracy of the compressed scheme
is the same as that of the original scheme.
For the error in $H^{1/2}_{0,}(I)$, we use the right side of the interpolation inequality
\begin{equation*}
   \forall v \in H^1_{0,}(I):  \| v \|_{H^{1/2}_{0,}(I)} \leq C \sqrt{ \| v \|_{L^2(I)} \cdot \| \partial_t v \|_{L^2(I)} }
\end{equation*}
with a constant $C>0$, see \cite[Proposition~2.3, p. 19]{LM1}. In other words, in the last column of Table~\ref{tab:1DA} and Table~\ref{tab:1DB}, the quantity $\sqrt{ \| \cdot \|_{L^2(I)} \cdot \| \partial_t \cdot \|_{L^2(I)} }$ is treated as $\| \cdot \|_{H^{1/2}_{0,}(I)}$.

\subsection{Preconditioning of space-time methods}
It is well-known that proper scaling of wavelet bases 
leads to norm equivalences \cite{DA1,DK92,Jaff} for 
a whole scale of Sobolev spaces. Indeed, a diagonal 
scaling of matrices in wavelet coordinates yields uniformly 
bounded condition numbers provided that the underlying 
operators are elliptic and continuous with respect to the 
given Sobolev space provided that the wavelets and their 
duals are regular enough. 

The condition numbers of the diagonally scaled system 
matrix ${\bf A}_j^t+\mu {\bf M}_j^t$ from the previous 
example are shown in Figure~\ref{fig:cond} for $\widetilde{d}
= 2$ and different values of the coupling parameter $\mu$. 
The condition numbers appear indeed uniformly bounded 
with respect to $h$ as the matrix ${\bf A}_j^t$ is equivalent 
to the $H_{0,}^{1/2}(I)$-norm and ${\bf M}_j^t$ is positive 
and bounded by the $L^2$-norm. However, we observe 
a strong dependence on the parameter $\mu$. In fact, 
the diagonally scaled version of ${\bf A}_j^t$ is also 
uniformly bounded, but the condition number of the 
matrix ${\bf M}_j^t$ grows linearly in $1/h_j^t$. 
Diagonal scaling does not help here and makes the 
situation even worse. Therefore, (standard) multilevel 
preconditioning for space-time formulations will not work
since we have to precondition -- in case of the heat 
equation -- the system matrix
\begin{equation}\label{eq:precond1}
  {\bf A}_j^t \otimes {\bf M}_j^x + {\bf M}_j^t\otimes {\bf A}_j^x
\end{equation}
with ${\bf M}_j^x$ being the spatial mass matrix and ${\bf A}_j^x$
corresponding to the spatial Laplacian.

In \cite{LZ21}, it was proposed to use the Bartels--Stewart method 
for the efficient solution of \eqref{eq:precond1}. This, however, requires
the Schur decomposition of the matrix $\big({\bf A}_j^t\big)^{-1}{\bf M}_j^t$
which is computationally expensive and hence time-consuming. We 
shall therefore make use of the observation from \cite{HM21} that the 
LU factorization of the compressed wavelet matrices can be computed 
very efficiently by means of nested dissection. Figure~\ref{fig:dissect} 
depicts the sparsity pattern of the compressed system matrix 
${\bf A}_j^t+\mu {\bf M}_j^t$ in wavelet coordinates. The fill-in of 
the LU factorization is minimal as both, the left factor as well as the 
right factor, have fewer nonzero entries than the original matrix. As a 
consequence, we can apply a fast sparse direct
solver for the linear systems in the temporal coordinate. 

\begin{figure}
\begin{center}
\includegraphics[trim=90 10 90 20,clip,width=0.45\textwidth,]{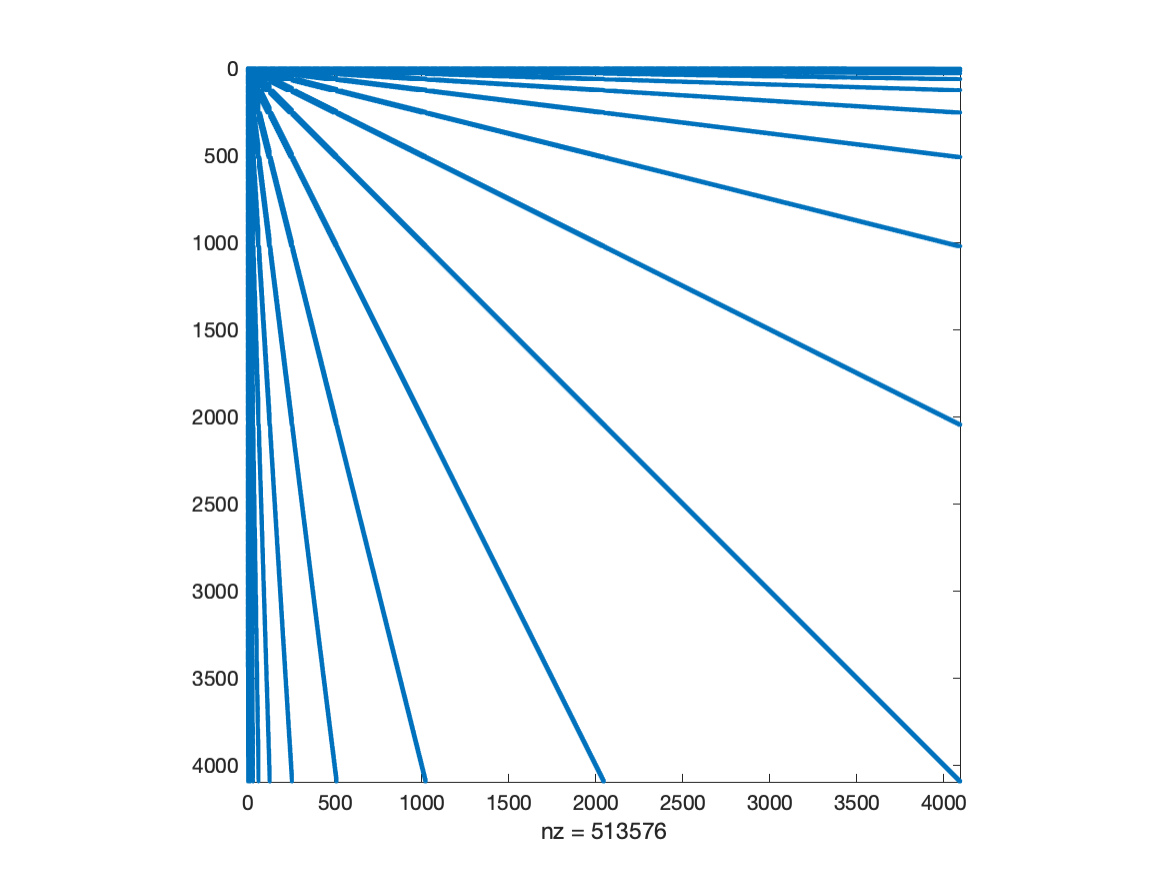}
\includegraphics[trim=90 10 90 20,clip,width=0.45\textwidth]{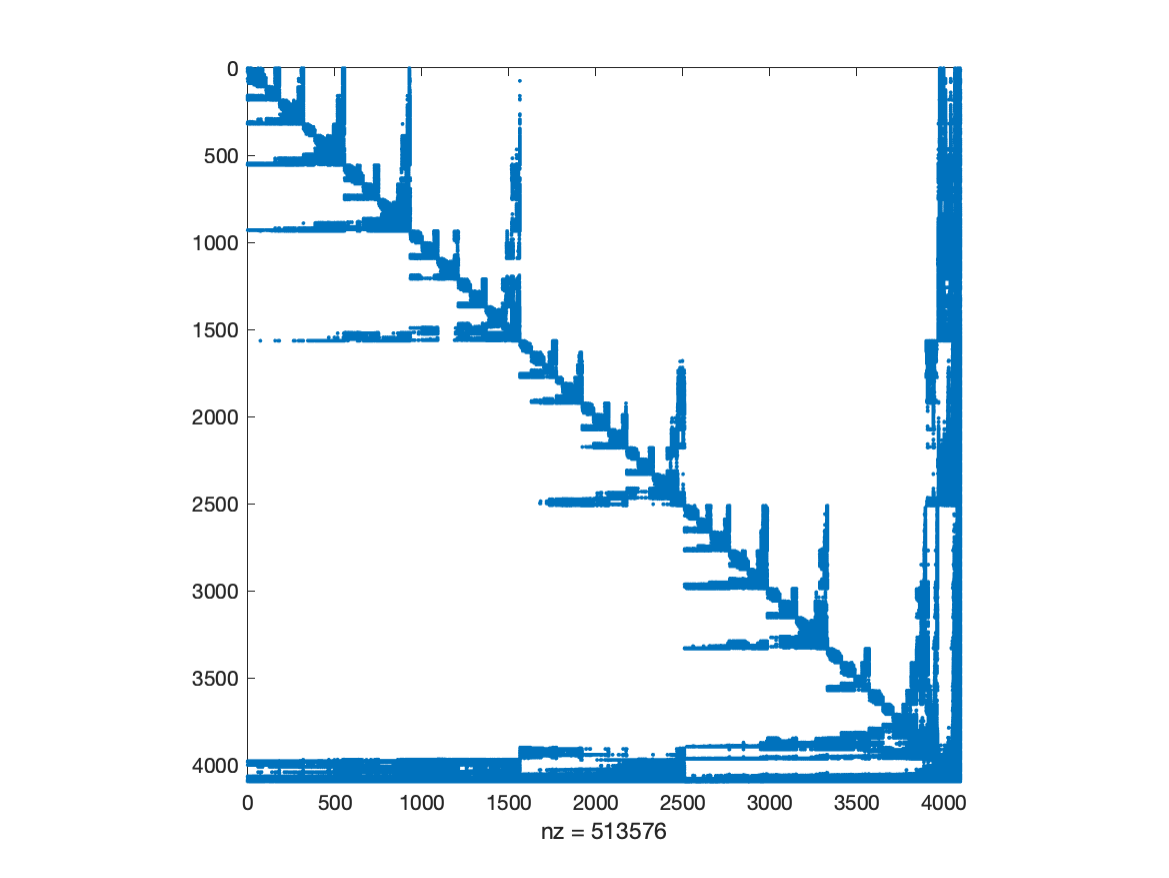}\\
\includegraphics[trim=90 10 90 20,clip,width=0.45\textwidth]{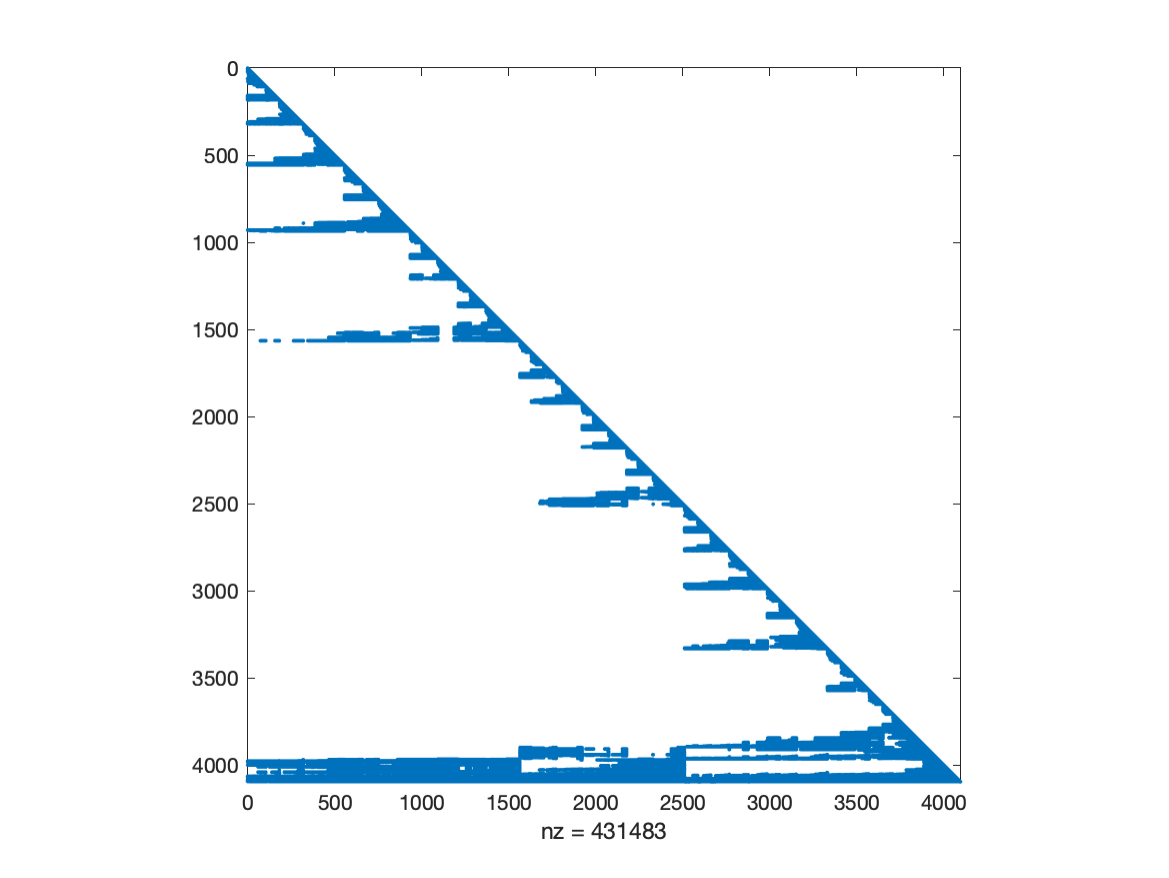}
\includegraphics[trim=90 10 90 20,clip,width=0.45\textwidth]{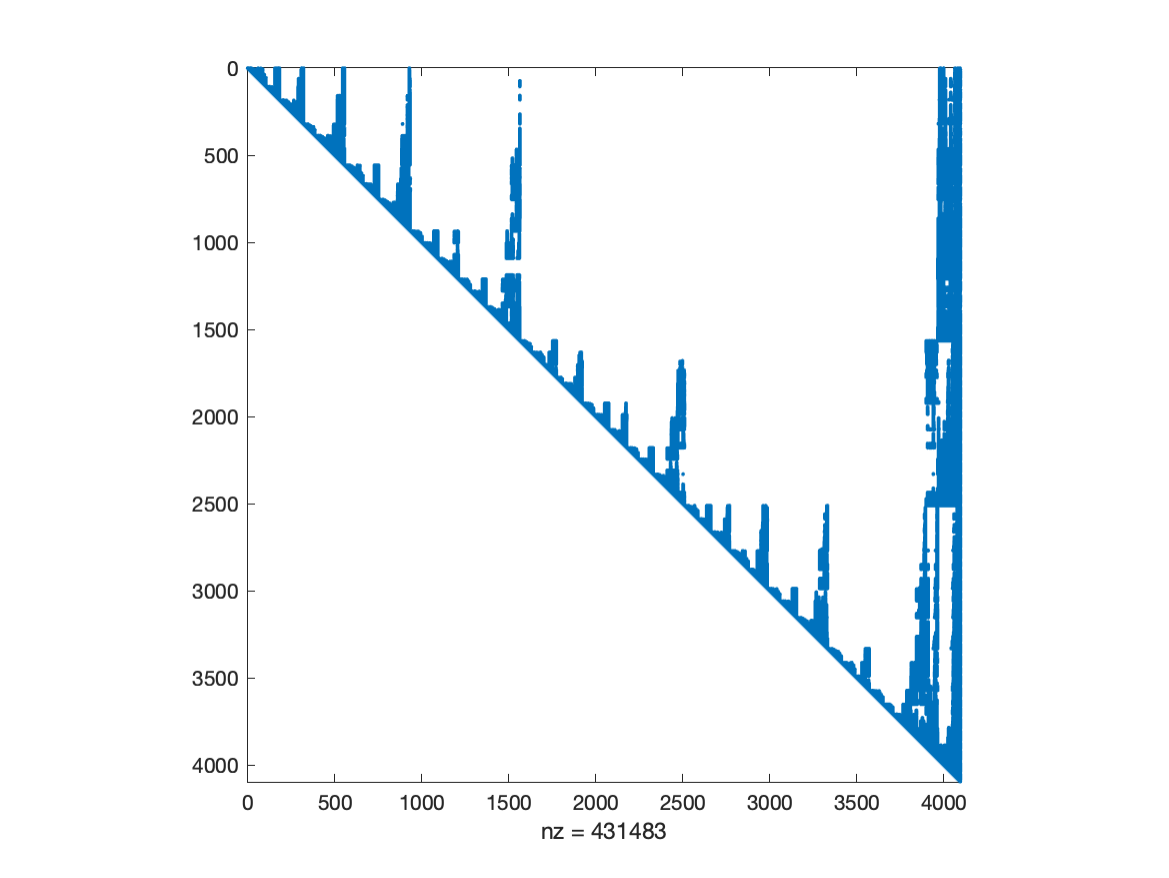}
\caption{\label{fig:dissect}
Sparsity patterns of the compressed system matrix (top left), 
its reordered version (top right) and of the associated LU factors
(bottom).}
\end{center}
\end{figure}

In order to exploit this fact, we assume that the
sequence 
\[
  S_0^x\subset S_1^x\subset S_2^x\subset \cdots,
  	\quad\text{where}\ S_j^x = \spn\{\phi_{j,k}^x:k\in\Delta_j^x\}
\]
consists of Lagrangian finite element spaces of continuous, 
piecewise polynomials of total degree $p_x\geq 1$ 
on a nested sequence of regular, 
simplicial quasiuniform partitions of the spatial domain $\domain$ that
is 
obtained by a systematic refinement, 
for example by the newest edge bisection or dyadic subdivision, 
which we will use here. 
Here, $\Delta_j^x = \{1,\ldots,N_j^x\}$ 
denotes the set of indices for (one-scale) nodal basis functions of $S^x_j$.
For preconditioning the system matrix \eqref{eq:precond1}, 
we 
switch to multilevel frame coordinates of BPX-type
in $S^x_j(\domain)$, see \cite{BPX, Griebel95,HSS08} for details. 
Then, on each grid-scale in the spatial domain $\domain$,
we have to perform a diagonal scaling of the system matrix 
\eqref{eq:precond1} for the current spatial meshwidth $h_j^x$. 
Since the entries of the finite element mass matrix $\diag({\bf M}_j^x)$ 
scale asymptotically like $1$ and those of the stiffness 
matrix $\diag({\bf A}_j^x)$ like $(h_j^x)^{-2}$ provided 
that the finite elements in space are $L^2$-normalized, 
we need to apply the diagonal scaling to
\[
  {\bf A}_j^t \otimes \diag({\bf M}_j^x) + {\bf M}_j^t\otimes \diag({\bf A}_j^x)
  	\sim \big({\bf A}_j^t+(h_j^x)^{-2}{\bf M}_j^t\big) \otimes {\bf I}_j^x,
\]
where ${\bf I}_j^x$ denotes the identity matrix in
$\mathbb{R}^{N_j^x\times N_j^x}$. 
Since the inverse of 
${\bf A}_j^t+(h_j^x)^{-2}{\bf M}_j^t$ can be cheaply 
computed, this diagonal scaling is feasible. 
The 
implementation of this preconditioner is along the 
lines of \cite{BPX}. Denoting the (spatial) prolongation 
from level $\ell$ to level $j>\ell$ by ${\bf I}_{\ell}^j$ and 
the (spatial) restriction from level $j$ to level $\ell<j$ 
by ${\bf I}_j^\ell$, one can implement the multilevel 
preconditioner in accordance with
\begin{equation}\label{eq:BPX}
 \sum_{\ell=0}^j \big({\bf I}_j^t\otimes {\bf I}_\ell^j\big) 
 \Big(\big({\bf A}_j^t+(h_\ell^x)^{-2}{\bf M}_j^t\big)^{-1}\otimes {\bf I}_\ell^x\Big)
 	\big({\bf I}_j^t\otimes{\bf I}_j^\ell\big).
\end{equation}
Here, in analogy to ${\bf I}_j^x$ introduced above, ${\bf I}_j^t$ 
denotes the identity matrix in $\mathbb{R}^{N_j^t\times N_j^t}$.
Note that the preconditioner \eqref{eq:BPX} is optimal 
\emph{except 
for a logarithmic factor which appears since the multilevel
frame is not stable in $L^2(\domain)$}, 
compare~\cite{HSS08} for example.

\subsection{Heat equation in two spatial dimensions}
\label{sec:Heat2D}
Let $\domain$ be the unit square $(0,1)^2$ and consider the
heat equation \eqref{Einf:PDG} with right-hand side
\[
f(x,t) = \sin(2\pi x_1)\sin(2\pi x_2)\big(\cos(t)+ 2\pi^2\sin(t)\big).
\] 
This yields the known solution $u(x,t) = \sin(2\pi x_1)
\sin(2\pi x_2)\sin(t)$ of the heat equation. 

\begin{table}
\begin{center}
\begin{tabular}{|c|c|c|cc|c|cc|}\hline
\multicolumn{2}{|c|}{unknowns} & \multicolumn{3}{c}{full tensor-product space} &
\multicolumn{3}{|c|}{sparse tensor-product space} \\ \hline
$N_j^t$ & $N_j^x$ & iter & \multicolumn{2}{c|}{$L^2(Q)$-error} 
& iter & \multicolumn{2}{c|}{$L^2(Q)$-error} \\\hline
16 & 289 & 19 & $3.0029\cdot 10^{-3}$ & & 22 & $3.0022\cdot 10^{-3}$ & \\
32 & 1089 & 20 & $7.5041\cdot 10^{-4}$ & (2.00) & 37 & $7.5041\cdot 10^{-4}$ & (2.00) \\
64 & 4225 & 20 & $1.8752\cdot 10^{-4}$ & (2.00) & 45 & $1.8752\cdot 10^{-4}$ & (2.00) \\
128 & 16641 & 20 & $4.6868\cdot 10^{-5}$ & (2.00) & 40 & $4.6868\cdot 10^{-5}$ & (2.00) \\
256 & 66049 & 20 & $1.1715\cdot 10^{-5}$ & (2.00) & 37 & $1.1715\cdot 10^{-5}$ & (2.00) \\
512 & 263169 & 21 & $2.9284\cdot 10^{-6}$ & (2.00) & 39 & $2.9285\cdot 10^{-6}$ & (2.00) \\
1024 & 1050625 & 21 & $7.3195\cdot 10^{-7}$ & (2.00) & 35 & $7.3226\cdot 10^{-7}$ & (2.00) \\\hline
\end{tabular}
\caption{\label{tab:precond}
Approximation error and number of iterations needed to solve the 
linear system of equations up to a relative error of $10^{-8}$ in $2+1$ 
dimensions. The table contains the results of both, the full and 
the sparse tensor-product discretization.}
\end{center}
\end{table}

On the level $j$, we subdivide the unit square into 
$2^j \times 2^j$ squares and discretize in the
spatial coordinate by piecewise bilinear finite elements. 
In the temporal coordinate, we apply piecewise linear 
wavelets with four vanishing moments. For the solution
of the linear system of equations, we use the iterative 
Krylov subspace solver GMRES (generalized minimal 
residual method, see \cite{SS86}), preconditioned by 
the multilevel preconditioner proposed in the previous 
subsection. We started the iteration with the initial guess
${\bf 0}$ and stopped it when the relative norm of the 
residual is smaller than $10^{-8}$.

The observed numerical results for the levels $j=4,\ldots,10$
are reported in Table~\ref{tab:precond} in the columns 
entitled ``full tensor-product space''. 
Therein, 
the number of iterations needed by the preconditioned 
GMRES solver appears to be essentially bounded. 
The expected rate $4^{-j}$ of convergence is 
realized.

\section{Sparse space-time tensor-product approximation}
\label{sec:SG}
In this section, we introduce a sparse space-time 
tensor-product Galerkin approximation to the solution 
of the parabolic evolution equation \eqref{Einf:PDG}, 
including space-time error estimates.

\subsection{Sparse tensor-product spaces}
\label{sec:SpTPSpc}
For a given refinement level $j \in \IN_0$, 
instead of using the full tensor-product space
$S_j^{x,t} = S_j^x\otimes S_j^t \subset H^{1,1/2}_{\Gamma_\mathrm{D};0,}(Q)$, we can also consider 
the \emph{sparse tensor-product space} $\widehat{S}_j^{x,t}$
for the discretization of the heat equation. In view of the multilevel 
decomposition $S_j^t = \bigoplus_{\ell=0}^j W_\ell^t$ in the temporal variable,
the sparse tensor-product space is defined by
\begin{equation}\label{eq:sparseTPS}
\widehat{S}_j^{x,t} = \bigoplus_{\ell=0}^j S_\ell^x \otimes W_{j-\ell}^t \subset H^{1,1/2}_{\Gamma_\mathrm{D};0,}(Q).
\end{equation}
With the representations $S_j^x = \spn\{\phi_{j,k}^x:k\in\Delta_j^x\}$ 
and $W_j^t = \spn\{\psi_{j,k}^t:k\in\nabla_j^t\}$, a 
basis in the sparse tensor-product space is given by 
\begin{equation}\label{eq:sparseTPbasis}
\widehat{S}_j^{x,t} = \spn\{\phi_{\ell,k}^x \otimes \psi_{j-\ell,k'}^t:
k\in\Delta_\ell^x,\ k'\in\nabla_{j-\ell}^t,\ \ell=0,\ldots,j\}.
\end{equation}
We refer to \cite[Section 6.1]{GH13} for all the details.

\subsection{Sparse tensor space-time Galerkin projection}
\label{sec:xtGalPrj}
The dimension of the sparse tensor-product space $\widehat{S}_j^{x,t}$
scales in general linearly in the maximum of the number 
of degrees of freedom in space and time, i.e. it scales as
$\mathcal{O}(\max\{N_j^x,N_j^t\})$, 
with an additional logarithm appearing
if $N_j^x = N_j^t$, compare \cite[Theorem 4.1]{GH13}. 
In \cite{GH13} also more general settings 
of the sparse tensor-product compared 
to the setting \eqref{eq:sparseTPS} have been studied. 

The \emph{sparse space-time tensor-product Galerkin 
approximation}
is to find $\widehat u_j \in \widehat{S}_j^{x,t} \subset 
H^{1,1/2}_{\Gamma_\mathrm{D};0,}(Q)$ such that
\begin{equation} \label{Sparse:WaermeFEM}
  \forall \widehat v_j \in \widehat{S}_j^{x,t}: \quad 
  b(\widehat u_j, \HT \widehat v_j)= \spf{f}{\HT \widehat v_j}_{Q}.
\end{equation}
The sparse space-time tensor-product approximation 
scheme~\eqref{Sparse:WaermeFEM} is well-defined and 
unconditionally stable with the space-time stability estimate
\[
  \norm{\widehat u_j}_{ H^{1/2}_{0,}(I;L^2(\domain))} 
  \leq \norm{f}_{[H^{1/2}_{,0}(I;L^2(\domain))]'},
\]
provided that $f \in [H^{1/2}_{,0}(I;L^2(\domain))]'$, 
see \cite[Theorem~3.4.20]{ZankDissBuch2020}. This is 
essential to ensure the stability of the projection defined by
\eqref{Sparse:WaermeFEM}, as in the sparse tensor-product 
\eqref{eq:sparseTPS} with basis \eqref{eq:sparseTPbasis} 
all possible combinations of spatial and temporal meshwidths 
appear simultaneously.

\subsection{Convergence rates}
\label{sec:CnvRt}
In this subsection, we prove convergence of the sparse tensor-product 
approximation $\widehat u_j$ given by~\eqref{Sparse:WaermeFEM}.

\begin{theorem}  \label{thm:SpxtErrBd}
Let $j \in \IN_0$ be a given refinement level
and let $u \in H^{1,1/2}_{\Gamma_\mathrm{D};0,}(Q)$ 
be the unique solution of the space-time variational 
formulation~\eqref{eq:IBVPVar}.
Let $\widehat u_j \in \widehat{S}_j^{x,t}$ 
be the sparse tensor-product 
approximation of~\eqref{eq:IBVPVar} 
given by~\eqref{Sparse:WaermeFEM}, 
and assume that $\mathcal L_x u \in L^2(Q)$ holds.

Then, 
the space-time error estimate
\begin{equation}\label{eq:SGerror}
\begin{aligned}
 &\| u - \widehat u_j \|_{H^{1/2}_{0,}(I; L^2(\domain))}
  \le 
   2\| u - \widehat P_j u \|_{H^{1/2}_{0,}(I; L^2(\domain))}\\
   &\qquad+ \big\| (\mathrm{Id}_{xt} - P_j^t \otimes \mathrm{Id}_x) \mathcal L_x u \big\|_{[H^{1/2}_{,0}(I; L^2(\domain))]'}  \\
   &\qquad+ C \sum_{\ell=0}^j (h_\ell^x)^{-1} \big\|\big((P_j^t-P_{j-\ell}^t)
    	\otimes (P_{\ell}^x - P_{\ell-1}^x)\big) u\big\|_{H^1(\domain) \otimes [H^{1/2}_{0,}(I)]'}.
\end{aligned}
\end{equation}
holds true, with a constant $C>0$ and 
the sparse space-time tensor-product projection defined by
  \begin{equation*}
    \widehat P_j u 
    = 
    \sum_{\ell=0}^j \big(P_{j-\ell}^t\otimes (P_{\ell}^x - P_{\ell-1}^x)\big) u \in \widehat{S}_j^{x,t}.
  \end{equation*}
  Here, for $\ell \in \IN_0$, $P_\ell^t\colon L^2(I)\to S_\ell^t$ 
  is any temporal projection and $P_\ell^x \colon \, 
  H^1_{\Gamma_\mathrm{D}}(\domain) \to S_\ell^x$ 
  denotes the spatial Ritz projection as defined in 
  Theorem~\ref{thm:convergence} 
  Further, we set $P_{-1}^x = 0$.
\end{theorem}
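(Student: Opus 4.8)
The plan is to mimic the proof of Theorem~\ref{thm:convergence}, the extra sum in~\eqref{eq:SGerror} accounting for the fact that $\widehat P_j u$ is not a pure tensor $P_j^t\otimes P_j^x$. First I would observe that $\widehat P_j u\in\widehat S_j^{x,t}$ (each summand $(P_{j-\ell}^t\otimes(P_\ell^x-P_{\ell-1}^x))u$ lies in $S_\ell^x\otimes S_{j-\ell}^t$), so that $v_j:=\widehat u_j-\widehat P_j u\in\widehat S_j^{x,t}$ is admissible and $\HT v_j=(\HT\otimes\mathrm{Id}_x)v_j\in H^{1,1/2}_{\Gamma_\mathrm{D};,0}(Q)$, whence the Galerkin orthogonality $b(\widehat u_j-u,\HT v_j)=0$ holds. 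By the triangle inequality it suffices to estimate $\|v_j\|_{H^{1/2}_{0,}(I;L^2(\domain))}$, and by the ellipticity~\eqref{b_H12elliptisch} together with that orthogonality one gets $\|v_j\|^2_{H^{1/2}_{0,}(I;L^2(\domain))}\le b(u-\widehat P_j u,\HT v_j)$. Expanding $b$ via~\eqref{bilinearform}, the temporal part $\langle\partial_t(u-\widehat P_j u),\HT v_j\rangle_Q$ is bounded by $\|u-\widehat P_j u\|_{H^{1/2}_{0,}(I;L^2(\domain))}\,\|v_j\|_{H^{1/2}_{0,}(I;L^2(\domain))}$ using the tensorised version of~\eqref{HT:Beschr}; fed back through the triangle inequality this produces the factor $2$ in front of $\|u-\widehat P_j u\|$.

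For the spatial part $\langle A\nabla_x(u-\widehat P_j u),\nabla_x\HT v_j\rangle_{L^2(Q)^n}$ I would use, with $Q_\ell^x:=P_\ell^x-P_{\ell-1}^x$ and $\sum_{\ell=0}^jQ_\ell^x=P_j^x$ (recall $P_{-1}^x=0$), the splitting
\[
u-\widehat P_j u=\big(\mathrm{Id}_t\otimes(\mathrm{Id}_x-P_j^x)\big)u+\big((\mathrm{Id}_t-P_j^t)\otimes P_j^x\big)u+\sum_{\ell=0}^j\big((P_j^t-P_{j-\ell}^t)\otimes Q_\ell^x\big)u.
\]
Since $\widehat S_j^{x,t}\subset S_j^x\otimes S_j^t$, the spatial part of $\HT v_j$ still lies in $S_j^x$; hence the Ritz identity~\eqref{eq:Ritz}, applied for a.e.\ $t$, annihilates the first summand and replaces $P_j^x$ by $\mathrm{Id}_x$ in the second. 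An integration by parts in $\domain$ (using $\mathcal L_x u\in L^2(Q)$ and that $\mathcal L_x$ commutes with temporal projections), exactly as in Theorem~\ref{thm:convergence}, turns the second summand into a pairing of $(\mathrm{Id}_{xt}-P_j^t\otimes\mathrm{Id}_x)\mathcal L_x u$ with $\HT v_j$, and duality together with the isometry property of $\HT$ yields the term $\|(\mathrm{Id}_{xt}-P_j^t\otimes\mathrm{Id}_x)\mathcal L_x u\|_{[H^{1/2}_{,0}(I;L^2(\domain))]'}\,\|v_j\|_{H^{1/2}_{0,}(I;L^2(\domain))}$.

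The third summand, with $e_\ell:=((P_j^t-P_{j-\ell}^t)\otimes Q_\ell^x)u$, is the genuinely new contribution. The key structural fact is that the Ritz detail spaces $\cW_\ell^x:=\operatorname{range}Q_\ell^x$ are mutually $A$-orthogonal and that $\widehat S_j^{x,t}=\bigoplus_{\ell=0}^j\cW_\ell^x\otimes S_{j-\ell}^t$; writing $v_j=\sum_m v_{j,m}$ accordingly and using that $e_\ell$ has spatial part in $\cW_\ell^x$ while $\HT$ acts only in time, the double sum collapses to the diagonal, $\langle A\nabla_x e_\ell,\nabla_x\HT v_j\rangle_{L^2(Q)^n}=\langle A\nabla_x e_\ell,\nabla_x\HT v_{j,\ell}\rangle_{L^2(Q)^n}$. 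Each diagonal term I would estimate by duality in time (pairing $[H^{1/2}_{0,}(I)]'$ with $H^{1/2}_{0,}(I)$) and Cauchy--Schwarz in the spatial $A$-inner product, using the isometry of $\HT$ on $L^2(I)$ and on $H^{1/2}_{0,}(I)$, a spatial inverse inequality at level $\ell$ for the factor coming from $\nabla_x\HT v_{j,\ell}$ (which yields the weight $(h_\ell^x)^{-1}$), and the uniform $L^2(\domain)$-stability of the Ritz projections $P_\ell^x$ (so that $v_j\mapsto v_{j,\ell}$ is bounded, $\|v_{j,\ell}\|_{H^{1/2}_{0,}(I;L^2(\domain))}\le C\|v_j\|_{H^{1/2}_{0,}(I;L^2(\domain))}$). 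This gives, per $\ell$, the bound $(h_\ell^x)^{-1}\|e_\ell\|_{H^1(\domain)\otimes[H^{1/2}_{0,}(I)]'}\,\|v_j\|_{H^{1/2}_{0,}(I;L^2(\domain))}$; summing over $\ell$, collecting the three contributions and dividing by $\|v_j\|_{H^{1/2}_{0,}(I;L^2(\domain))}$ yields~\eqref{eq:SGerror}.

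The hard part is this third contribution. One must recognise that the spatial test function still lives in the \emph{full} space $S_j^x$, so that full-level Ritz orthogonality removes the ``pure spatial'' error and only the sparsity mismatch $\sum_\ell((P_j^t-P_{j-\ell}^t)\otimes Q_\ell^x)u$ survives; and one must then decouple the resulting double multilevel sum by $A$-orthogonality of the Ritz details and carefully track the level-dependent inverse factor, which in the temporal variable lands in the negative-order norm $[H^{1/2}_{0,}(I)]'$. The remaining ingredients — $L^2(\domain)$-stability of the elliptic projection, spatial and temporal inverse inequalities, and the mapping properties of $\HT$ — are standard.
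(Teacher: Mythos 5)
Your proposal is correct and follows essentially the same route as the paper's proof: triangle inequality, ellipticity plus Galerkin orthogonality, elimination of the pure spatial error via the full-level Ritz identity, and treatment of the sparsity-mismatch sum $\sum_\ell((P_j^t-P_{j-\ell}^t)\otimes(P_\ell^x-P_{\ell-1}^x))u$ by $A$-orthogonality of the Ritz increments (which lets one test against $(\mathrm{Id}_t\otimes(P_\ell^x-P_{\ell-1}^x))\widehat v_j$ only), followed by the spatial inverse inequality and $L^2(\domain)$-stability of the Ritz projection. No substantive differences from the argument given in the paper.
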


\begin{proof}
  The triangle inequality yields
  \begin{equation*}
    \| u - \widehat u_j \|_{H^{1/2}_{0,}(I; L^2(\domain))} 
    \leq \| u - \widehat P_j u \|_{H^{1/2}_{0,}(I; L^2(\domain))} 
    + 
    \| \widehat u_j - \widehat P_j u \|_{H^{1/2}_{0,}(I; L^2(\domain))}.
  \end{equation*}
  The second term in the upper bound is estimated as follows. 

  In view of the $H^{1/2}_{0,}(I; L^2(\domain))$-ellipticity of the 
  bilinear form $b(\cdot,\cdot)$ in \eqref{b_H12elliptisch} and the 
  Galerkin orthogonality corresponding to \eqref{Sparse:WaermeFEM}, we calculate
\begin{equation*}
\begin{array}{rl}
    \| \widehat u_j - \widehat P_j u \|_{H^{1/2}_{0,}(I; L^2(\domain))}^2 
    & \leq 
    b(\widehat u_j - \widehat P_j u, \HT (\widehat u_j - \widehat P_j u )) \\
    & = b( u - \widehat P_j u, \HT (\widehat u_j - \widehat P_j u)) \\
    & = \langle \partial_t (u- \widehat P_j u), \HT (\widehat u_j - \widehat P_j u) \rangle_Q \\
    &+ \langle A \nabla_x (\mathrm{Id}_{xt}-P_j) u, \nabla_x \HT (\widehat u_j - \widehat P_j u)\rangle_{L^2(Q)^n} \\
    &+ \langle A \nabla_x (P_j -\widehat P_j) u, \nabla_x \HT (\widehat{u}_j-\widehat P_j u)\rangle_{L^2(Q)^n}
\end{array}
\end{equation*}
with $P_j u = (P_j^t \otimes P_j^x) u \in S_j^{x,t}$.
For the first term, we have with \eqref{HT:Beschr} that
\[
\langle \partial_t (u- \widehat P_j u), \HT (\widehat u_j - \widehat P_j u) \rangle_Q 
\le\|u- \widehat P_j u\|_{H^{1/2}_{0,}(I; L^2(\domain))}
\|\widehat u_j - \widehat P_j u\|_{H^{1/2}_{0,}(I; L^2(\domain))},
\]
while we find for the second term by the definition of the Ritz projection $P_j^x$ and by integration by parts
\begin{align*}
  &\langle A \nabla_x (\mathrm{Id}_{xt}-P_j) u, 
  	\nabla_x \HT (\widehat u_j - \widehat P_j u)\rangle_{L^2(Q)^n} \\
  &\qquad = \langle A \nabla_x (\mathrm{Id}_{xt}-P_j^t \otimes \mathrm{Id}_x) u, 
  	\nabla_x \HT (\widehat u_j - \widehat P_j u)\rangle_{L^2(Q)^n} \\
  &\qquad \le\big\| (\mathrm{Id}_{xt} - P_j^t \otimes \mathrm{Id}_x) 
  	\mathcal L_x u \big\|_{[H^{1/2}_{,0}(I; L^2(\domain))]'}
  		\|\widehat u_j - \widehat P_j u\|_{H^{1/2}_{0,}(I; L^2(\domain))}.
\end{align*}
The last term is treated as follows. We set 
$\widehat{v}_j = \widehat u_j - \widehat P_j u \in\widehat{S}_j^{x,t}$
and insert the definition of the sparse grid projection 
and the representation $P_j = \sum_{\ell=0}^j P_j^t \otimes (P_\ell^x-P_{\ell-1}^x)$:
\begin{align*}
    &\langle A \nabla_x (P_j-\widehat P_j) u, \nabla_x \HT \widehat{v}_j \rangle_{L^2(Q)^n}\\
    &\;= \sum_{\ell=0}^j \langle A \nabla_x \big((P_j^t-P_{j-\ell}^t)
    	\otimes (P_{\ell}^x - P_{\ell-1}^x)\big) u, \nabla_x \HT\widehat{v}_j\rangle_{L^2(Q)^n}\\
    &\;= \sum_{\ell=0}^j \langle A \nabla_x \big((P_j^t-P_{j-\ell}^t)
    	\otimes (P_{\ell}^x - P_{\ell-1}^x)\big) u, 
             \nabla_x \HT \big(\mathrm{Id}_t\otimes 
             	(P_{\ell}^x - P_{\ell-1}^x)\big) \widehat{v}_j\rangle_{L^2(Q)^n}\\
   &\;\le \sum_{\ell=0}^j \big\|\big((P_j^t-P_{j-\ell}^t)
    	\otimes (P_{\ell}^x - P_{\ell-1}^x)\big) u\big\|_{H^1(\domain) \otimes [H^{1/2}_{0,}(I)]'}\\
  &\hspace*{5cm}\times\big\|\big(\mathrm{Id}_t
  	\otimes (P_{\ell}^x-P_{\ell-1}^x)\big)\widehat{v}_j\big\|_{H^{1/2}_{0,}(I;H^1(\domain))}.
\end{align*}
Using finally the inverse inequality and the $L^2$-stability of 
the Ritz projections $P_\ell^x$, we derive
\[
  \big\|\big(\mathrm{Id}_t
  	\otimes (P_{\ell}^x-P_{\ell-1}^x) \big) \widehat{v}_j \big\|_{H^{1/2}_{0,}(I;H^1(\domain))}
	\le C (h_\ell^x)^{-1} \big\|\widehat{v}_j\big\|_{H^{1/2}_{0,}(I;L^2(\domain))},
\]
with a constant $C>0$, which proves the claim.
\end{proof}

The error bound in Theorem~\ref{thm:SpxtErrBd} implies convergence rate
bounds for solutions of sufficient spatio-temporal regularity. However, 
compared to the convergence rate in the full tensor-product space, 
we only obtain a reduced convergence rate in the spatial variable, 
even for regular solutions when we choose $P_j^t = P_j^{1/2,t}$ as
defined in \eqref{H12-Projektion}. Namely, for the single terms 
in \eqref{eq:SGerror} we have
\begin{gather*}
  \| u - \widehat P_j u \|_{H^{1/2}_{0,}(I; L^2(\domain))}\le 
  	C \sum_{\ell=0}^j (h_\ell^x)^{p_x+1} (h_{j-\ell}^t)^{d-1/2} \| u \|_{H^{d}(I; H^{p_x+1}(\domain))},\\
  \big\| (\mathrm{Id}_{xt} - P_j^t \otimes \mathrm{Id}_x) \mathcal L_x u \big\|_{[H^{1/2}_{,0}(I; L^2(\domain))]'}
  	\le C (h_j^t)^{d+1/2} \| u \|_{H^{d}(I; H^2(\domain))},
\end{gather*}
and
\begin{align*}
&\sum_{\ell=0}^j (h_\ell^x)^{-1} \big\|\big((P_j^t-P_{j-\ell}^t)
    	\otimes (P_{\ell}^x - P_{\ell-1}^x)\big) u\big\|_{H^1(\domain) \otimes [H^{1/2}_{0,}(I)]'}\\
 &\qquad\qquad\le C\sum_{\ell=0}^j (h_\ell^x)^{p_x-1}(h_{j-\ell}^t)^{d+1/2}
		\| u \|_{H^{d}(I; H^{p_x+1}(\domain))}.
\end{align*}
Altogether, this implies the estimate
\begin{equation}\label{eq:SGrate}
  \| u - \widehat u_j \|_{H^{1/2}_{0,}(I; L^2(\domain))} \le C 
	\max\big\{(h_j^x)^{p_x-1},(h_j^t)^{d-1/2}\big\}\| u \|_{H^{d}(I; H^{p_x+1}(\domain))}.
\end{equation}
This rate, however, is not observed in our numerical experiments:
for smooth solutions, essentially the same rate as obtained 
with the full tensor-product approximation space is observed. 

\begin{remark}
A possible explanation for this observation could be that the expression 
$\big(\mathrm{Id}_t\otimes (P_{\ell}^x-P_{\ell-1}^x)\big)\widehat{v}_j$
contains for $\widehat{v}_j\in\widehat{S}_j^{x,t}$ only contributions 
from the tensor-product spaces $S_k^x \otimes S_{j-k}^t$ for $k<\ell$.
Hence, if $P_\ell^t$ is chosen to be the projection $P_\ell^t\colon \, 
L^2(I) \to S_\ell^t$ defined by 
\begin{equation} \label{L2-Projektion}
  \forall z_\ell \in S_\ell^t : \quad  \langle P_\ell^t v, 
  	\HT z_\ell \rangle_{L^2(I)} = \langle v, \HT z_\ell \rangle_{L^2(I)} 
\end{equation}
for a given function $v \in L^2(I)$, then the term $\langle A \nabla_x 
(P_j-\widehat P_j) u, \nabla_x \HT \widehat{v}_j \rangle_{L^2(Q)^n}$
vanishes in the above proof. This would lead to the error estimate
\begin{multline*}
 \| u - \widehat u_j \|_{H^{1/2}_{0,}(I; L^2(\domain))}
  \le 
  2\| u - \widehat P_j u \|_{H^{1/2}_{0,}(I; L^2(\domain))}  \\
  + \big\| (\mathrm{Id}_{xt} - P_j^t \otimes \mathrm{Id}_x) \mathcal L_x u \big\|_{[H^{1/2}_{,0}(I; L^2(\domain))]'}.
\end{multline*}
Stability and consistency bounds for the projection 
\eqref{L2-Projektion} are delicate and topic of current investigation
\cite{LoescherSteinbachZankHT2024}.
\end{remark}

\begin{corollary}\label{cor:SpxtCnvRt}
Assume the sparse space-time Galerkin solution $\widehat{u}_j$
in \eqref{Sparse:WaermeFEM} has been obtained with temporal 
biorthogonal spline-wavelets of order $d\geq 2$ in $I$ and with 
Lagrangian finite elements of degree $p_x \geq 1$ in $\domain$.
Assume also that the exact solution $u$ of the variational IBVP 
\eqref{eq:IBVPVar} admits the regularity $u\in H^{d}(I;H^{1+p_x}
(\domain)) \simeq H^{1+p_x}(\domain) \otimes H^{d}(I)$.

Then, there holds the asymptotic error bound \eqref{eq:SGrate}.
Further, subject to the wavelet matrix compression of the temporal
stiffness and mass-matrices in Section~\ref{sec:wavelets} 
according to the specifications in Section~\ref{sec:MatCmpr},
the numerical solution $\widehat u_j$ can be computed in 
essentially\footnote{Up to an $O(j)$ factor.} 
$O(2^{n j})$ operations and memory.
\end{corollary}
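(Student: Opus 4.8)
The plan is to handle the two assertions separately; the rate \eqref{eq:SGrate} is bookkeeping on top of Theorem~\ref{thm:SpxtErrBd}, while the complexity claim assembles the ingredients of Sections~\ref{sec:wavelets}--\ref{sec:MatCmpr} together with the preconditioner \eqref{eq:BPX}. For the error bound, the assumed regularity $u\in H^{d}(I;H^{1+p_x}(\domain))$ together with the mapping property of $\mathcal L_x$ guarantees $\mathcal L_x u\in L^2(Q)$, so Theorem~\ref{thm:SpxtErrBd} applies with $P_\ell^x$ the spatial Ritz projection and $P_\ell^t=P_\ell^{1/2,t}$ from \eqref{H12-Projektion}. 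I would then insert into the right-hand side of \eqref{eq:SGerror} the three term-by-term estimates displayed in the text for the summands of \eqref{eq:SGerror}; since $p_x\ge 1$ gives $H^{1+p_x}(\domain)\hookrightarrow H^2(\domain)$, all three are dominated by $\|u\|_{H^{d}(I;H^{1+p_x}(\domain))}$, and carrying out the geometric summation over $\ell$ (with $h_\ell^x\sim 2^{-\ell}$, $h_{j-\ell}^t\sim 2^{-(j-\ell)}$, so that the dominant contribution comes from $\ell=j$ in the spatially critical sum and from $\ell=0$ in the temporally critical sum) produces exactly the rate $\max\{(h_j^x)^{p_x-1},(h_j^t)^{d-1/2}\}$ in \eqref{eq:SGrate}. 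This step is routine.

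For the complexity assertion, write $N_j^t\sim 2^{j}$ and $N_j^x\sim 2^{nj}$. First I would record the dimension count
\[
\dim\widehat S_j^{x,t}=\sum_{\ell=0}^j (\dim S_\ell^x)(\dim W_{j-\ell}^t)\sim \sum_{\ell=0}^j 2^{n\ell}\,2^{\,j-\ell}=2^{\,j}\sum_{\ell=0}^j 2^{(n-1)\ell},
\]
which is $O(2^{nj})$ for $n\ge 2$ and $O(j\,2^{j})$ for $n=1$, hence $O(2^{nj})$ up to the admitted $O(j)$ factor (cf.\ \cite[Theorem~4.1]{GH13}). For the assembly, the spatial mass and stiffness matrices ${\bf M}_j^x,{\bf A}_j^x$ have $O(N_j^x)$ nonzero entries, while by Theorem~\ref{thm:a-priori}, applied as specified in Section~\ref{sec:MatCmpr}, the wavelet-compressed temporal matrices carry only $O(N_j^t)$ nonzero entries; these are computed to consistency-preserving accuracy in $O(N_j^t)$ work by the exponentially convergent quadrature of \cite{ZankExact2021,ZankIntegral2023}, which relies precisely on the analytic kernel bounds \eqref{eq:CZEst}. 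Reordering the compressed temporal matrices by nested dissection \cite{Geo73,LRT79,HM21} and forming their sparse triangular factors costs again $O(N_j^t)$ work and memory, and introduces no new approximation error.

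The iterative solve is preconditioned GMRES for the sparse-tensor restriction of \eqref{eq:precond1}. One matrix-vector product costs $O(\dim\widehat S_j^{x,t})$ up to an $O(j)$ factor, using the Kronecker structure, the sparsity of both factors, and the nestedness $S_\ell^x\subset S_{\ell'}^x$ (so that cross-level spatial couplings are realized by $O(N_j^x)$ grid transfers). One application of the BPX-type preconditioner \eqref{eq:BPX} performs $O(j)$ spatial transfer steps, each combining an $O(N_\ell^x)$ prolongation/restriction with an $O(N_j^t)$ solve against $\big({\bf A}_j^t+(h_\ell^x)^{-2}{\bf M}_j^t\big)^{-1}$ via the precomputed sparse factorization; summing over $\ell=0,\dots,j$ this is $O(j\,2^{nj})$, i.e.\ $O(2^{nj})$ up to the admitted slack. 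Since the temporal factor is realized exactly and $\HT$ is $H^{1/2}_{0,}(I)$-elliptic and $L^2(I)$-bounded by \eqref{HT:Elliptisch}--\eqref{HTPositivsemi}, the wavelet/BPX norm-equivalence machinery \cite{DA1,BPX,HSS08} bounds the GMRES iteration count up to a logarithmic factor in $2^j$ (consistent with Table~\ref{tab:precond}), again absorbed into the $O(j)$ factor. Multiplying iteration count, per-iteration cost and preconditioner cost yields total work $O(2^{nj})$ up to an $O(j)$ factor; the memory footprint (compressed temporal matrices and their factors, sparse spatial matrices, and $O(1)$ coefficient vectors of length $\dim\widehat S_j^{x,t}$) is bounded the same way.

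The hard part is making this last iteration-count bound fully rigorous on the \emph{sparse} tensor-product space: the BPX multilevel frame is not $L^2(\domain)$-stable, so one only obtains optimality up to a logarithmic factor, and a clean spectral-equivalence analysis on $\widehat S_j^{x,t}$ — where all spatial and temporal mesh-widths occur simultaneously — is technical and, as the text notes, tied to stability and consistency properties of the temporal projection \eqref{L2-Projektion} that are still under investigation in \cite{LoescherSteinbachZankHT2024}. This is precisely why the statement permits the generous $O(j)$ factor and why the quantitative evidence of Table~\ref{tab:precond} is cited in support of the asymptotic claim.
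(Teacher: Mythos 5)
Your proposal is correct and follows essentially the same route as the paper: the rate \eqref{eq:SGrate} is obtained by feeding the three displayed term-by-term estimates (with $P_\ell^t=P_\ell^{1/2,t}$ and the Ritz projection in space) into Theorem~\ref{thm:SpxtErrBd} and summing the geometric series, while the complexity claim is assembled exactly from the ingredients of Sections~\ref{sec:MatCmpr}, \ref{sec:ItSol} and \ref{sec:FastMV} (compressed temporal matrices, nested-dissection factorization, unidirectional matrix-vector products, BPX-type preconditioner). Your closing caveat about the lack of a fully rigorous iteration-count bound on the sparse tensor-product space matches the paper's own reliance on the ``optimal up to a logarithmic factor'' statement and the numerical evidence of Table~\ref{tab:precond}.
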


\subsection{Iterative solution}
\label{sec:ItSol}
We show the claim on solver complexity in Corollary~\ref{cor:SpxtCnvRt}
and detail the sparse space-time matrix-vector multiplication,
and the corresponding preconditioner.
For the solution of the linear system of equations which
arises from the discretization \eqref{Sparse:WaermeFEM} 
of the heat equation with respect 
to the sparse space-time tensor-product space $\widehat{S}^{x,t}_j$, 
we use preconditioned GMRES.
To this end, 
we require matrix-vector products of the form
\begin{equation}\label{eq:MV-product}
  {\bf v}_{j-\ell',\ell'} = \sum_{\ell=0}^j
  \big({\bf A}_{j-\ell,j-\ell'}^t\otimes {\bf M}_{\ell,\ell'}^x
  + {\bf M}_{j-\ell,j-\ell'}^t\otimes {\bf A}_{\ell,\ell'}^x\big){\bf u}_{j-\ell,\ell}
\end{equation}
for all $0\le\ell'\le j$.
Here, 
the matrices ${\bf A}_{j-\ell,j-\ell'}^t$ 
and ${\bf M}_{j-\ell,j-\ell'}^t$ are specific blocks of the wavelet 
representations of the temporal matrices and therefore available. 
In contrast, the matrices ${\bf A}_{\ell,\ell'}^x$ and ${\bf M}_{\ell,\ell'}^x$ 
correspond to spatial finite element mass and stiffness 
matrices with trial and test functions from different 
discretization levels when $\ell\not=\ell'$. 
Such matrices are usually not available in practice. 
Therefore, we replace them with square 
finite element matrices and apply inter-grid 
restrictions to derive the desired ones. 
This means we employ the identities
\begin{equation}\label{eq:FEM1}
  {\bf A}_{\ell,\ell'}^x = {\bf I}_{\ell'}^\ell {\bf A}_{\ell'}^x,\quad
  {\bf M}_{\ell,\ell'}^x = {\bf I}_{\ell'}^\ell {\bf M}_{\ell'}^x,
  	\quad \text{if $\ell<\ell'$}, 
\end{equation}
and
\begin{equation}\label{eq:FEM2}
  {\bf A}_{\ell,\ell'}^x = {\bf A}_{\ell}^x {\bf I}_{\ell'}^\ell,\quad
  {\bf M}_{\ell,\ell'}^x = {\bf M}_{\ell}^x {\bf I}_{\ell'}^\ell,
  	\quad \text{if $\ell>\ell'$}.
\end{equation}
In both cases, the application of ${\bf A}_{\ell,\ell'}^x$
and ${\bf M}_{\ell,\ell'}^x$, respectively, scales 
linearly in the number 
$\max\{N_\ell^x,N_{\ell'}^x\}$ 
of degrees of freedom.

Note that the multilevel preconditioner in \eqref{eq:BPX} 
needs also to be adapted to the present setting. 
Namely, 
restricting \eqref{eq:BPX} to the sparse tensor-product 
space yields the multilevel preconditioner
\[
 \sum_{\ell=0}^j \big({\bf I}_{j-\ell}^t\otimes {\bf I}_\ell^j\big) 
 \Big(\big({\bf A}_{j-\ell}^t+(h_\ell^x)^{-2}{\bf M}_{j-\ell}^t\big)^{-1}\otimes {\bf I}_\ell^x\Big)
 	\big({\bf I}_{j-\ell}^t\otimes {\bf I}_j^\ell\big).
\]

\subsection{Fast matrix-vector products}
\label{sec:FastMV}
We shall explain how to compute matrix-vector products
in \eqref{eq:MV-product} in an efficient way. To this end, we will 
exploit that, given matrices ${\bf B}\in\mathbb{R}^{k\times\ell}$,
${\bf X}\in\mathbb{R}^{\ell\times m}$, ${\bf A}\in\mathbb{R}^{n\times m}$,
and ${\bf Y}\in\mathbb{R}^{k\times n}$, there holds the identity
\begin{equation}	\label{eq:equivalence}
  	\operatorname{vec}({\bf Y}) 
		= ({\bf A}\otimes{\bf B})\operatorname{vec}({\bf X})
        \quad\Longleftrightarrow\quad
        {\bf BXA}^\intercal = {\bf Y},
\end{equation}
where the vectorization $\operatorname{vec}(\cdot)$ converts 
a matrix into a column vector.
We will use this equivalence to realize a fast matrix-vector 
multiplication. For the sake of simplicity in presentation, we assume 
that the vector 
$\widehat{\bf u}_j = [{\bf u}_{\ell,j-\ell}]_{0\le\ell\le j}$ 
is partitioned into blocks of coefficients
associated with the sparse 
tensor-product basis in $\widehat{S}_j^{x,t}$ 
(compare \eqref{eq:sparseTPbasis}) 
and is blockwise stored in matrix form, i.e.\ 
${\bf u}_{\ell,j-\ell}\in\mathbb{R}^{|\Delta_\ell^x|\times|\nabla_{j-\ell}^t|}$.
Recall that we have  (cp. Sec.~\ref{sec:Notat})
$|\Delta_\ell^x| = N_\ell^x$ and $|\nabla_{j-\ell}^t| \sim N_{j-\ell}^t$.

Computation of one matrix-vector multiplication \eqref{eq:equivalence}
requires to compute products of the form
\[
  \operatorname{vec}({\bf z}) 
   = 
    \big({\bf A}_{j-\ell',j-\ell}^t\otimes{\bf B}_{\ell',\ell}^x\big)
  	\operatorname{vec}({\bf u}_{\ell,j-\ell}),
		\quad 0\le \ell,\ell' \leq j.
\]
In view of \eqref{eq:equivalence}, this means that
\[
  {\bf z} = {\bf B}_{\ell',\ell}^x{\bf u}_{\ell,j-\ell}\big({\bf A}^t_{j-\ell',j-\ell}\big)^\intercal,
	\quad 0\le \ell,\ell' \leq j.
\]
To optimize the complexity bound, 
care has to be taken with respect to
the order in which one performs the multiplications. 
If the matrix-vector multiplication 
is performed in accordance with
\[
  {\bf y} = {\bf u}_{\ell,j-\ell}\big({\bf A}^t_{j-\ell',j-\ell}\big)^\intercal,\quad
  {\bf z} = {\bf B}_{\ell',\ell}^x{\bf y},\quad\text{if}\ |\nabla_{j-\ell'}^t|\cdot 
  |\Delta_\ell^x|\le |\nabla_{j-\ell}^t|\cdot |\Delta_{\ell'}^x|,
\]
and
\[
  {\bf y} = {\bf B}_{\ell',\ell}^x{\bf u}_{\ell,j-\ell},\quad
  {\bf z} = {\bf y}\big({\bf A}^t_{j-\ell',j-\ell}\big)^\intercal,
  \quad\text{if}\ |\nabla_{j-\ell'}^t|\cdot |\Delta_\ell^x|
  > |\nabla_{j-\ell}^t|\cdot |\Delta_{\ell'}^x|,
\]
then we require at most $\mathcal{O}(\max\{N_j^x,N_j^t\})$ 
operations since the matrices ${\bf B}_{\ell',\ell}^x$ 
and ${\bf A}^t_{j-\ell',j-\ell}$ have only $\mathcal{O}
(\max\{|\Delta_\ell^x|,|\Delta_{\ell'}^x|\})$ and 
$\mathcal{O}(|\nabla_{j-\ell}^t|,|\nabla_{j-\ell'}^t|\})$
nonzero coefficients, respectively. Note that the 
application of the identities \eqref{eq:FEM1} and 
\eqref{eq:FEM2} do not change this complexity 
bound but avoid the non-quadratic finite element 
matrices ${\bf B}_{\ell',\ell}^x$ when $\ell\not= \ell'$. 
In all, we thus arrive at an algorithm that computes all 
matrix-vector product of the form \eqref{eq:MV-product}
in essentially linear complexity. This is achieved via 
the so-called ``unidirectional principle'', specifically 
Algorithm UNIDIRML in \cite{Zeiser:2011},
see also \cite{Bungartz.Griebel:2004}.

\subsection{Numerical results}
\label{sec:NumRes}
We recompute the example in Subsection~\ref{sec:Heat2D}
with respect to the sparse tensor-product space \eqref{Sparse:WaermeFEM}
instead of the full tensor-product space \eqref{FEMxt:WaermeFEM}. 
The results are displayed in Table \ref{tab:precond} 
in the columns labeled ``sparse tensor-product space''. 
Using the sparse tensor-product space results in an 
approximation accuracy that is basically the same 
as furnished with the full tensor-product space. 
Whereas, the number of 
iterations of the iterative solver is about a factor two larger 
in comparison with the full tensor-product space, compare
the 3rd and 5th column in Table \ref{tab:precond}. For example,
for the finest resolution, where $N_j^x = 1050625$ and
$N_j^t = 1024$, 
we have only about $12.5$ million degrees 
of freedom in the sparse tensor-product space instead 
of about one billion degrees of freedom in the full 
tensor-product space. 
Since a single iteration of the GMRES solver 
is therefore drastically cheaper due to the much smaller 
amount of degrees of freedom, the computing time and
memory requirement for the iterative solution is 
considerably smaller.

\section{Conclusion and further directions}
\label{sec:conclusio}
We proposed and analyzed a class of variational space-time discretizations
for initial-boundary value problems of linear, parabolic evolution equations, 
with finite time horizon $0<T<\infty$.
As in \cite{PSZ23,SteinbachZankETNA2020},
the space-time Galerkin 
algorithm is based on combining the classical variational formulation
in the spatial variable, combined with a new, $H^{1/2}_{0,}(0,T)$-coercive variational
formulation of the first-order evolution operator $\partial_t$. 
Whereas in \cite{PSZ23}, we leveraged the \emph{analytic semigroup property}
of the parabolic solution operator by exponentially convergent temporal 
Petrov--Galerkin discretizations, we focus in the present work on 
settings with solutions of low temporal regularity. 
These typically
arise in pathwise discretizations of linear, parabolic stochastic PDEs 
which are driven by noises of low pathwise temporal regularity,
such as cylindrical Wiener processes. 
For such PDEs, low temporal regularity
obstructs exponential convergence of $hp$-time discretizations of \cite{PSZ23}.
Time-adaptive schemes are likewise not offering substantial savings, 
as typically the temporal singular support of pathwise solutions is dense in $(0,T)$.
Accordingly, low-order, non-adaptive discretizations in $(0,T)$ afford optimal
convergence rates. 
Furthermore, 
\emph{multilevel space-time discretization} 
is desirable in connection with
so-called multilevel Monte-Carlo discretizations of such stochastic PDEs.

Stability of the continuous
variational formulation in the case of a finite time horizon $T$
and 
of its Galerkin discretization is based on an explicit, linear and nonlocal 
temporal operator, a modified Hilbert transform,
introduced first in \cite{SteinbachZankETNA2020}. 
Upon (Petrov--)Galerkin discretization with finite-dimensional 
spaces of Lagrangian finite element functions 
of degree $p_t\geq 1$ and of dimension $N_t$ in $(0,T)$, 
the evolution operator induces fully populated temporal matrices, i.e.\ 
a complexity $O(N_t^2)$ of memory and work for the time-derivative.
Similar to \cite{RStvWJW22}, where stability was ensured by
an adaptive wavelet algorithm,
we use suitable biorthogonal spline-wavelet bases in $(0,T)$
and standard Lagrangian finite element spaces in the physical domain $\domain$.
We proved that the densely populated 
matrices resulting from non-adaptive (Petrov--)Galerkin discretization
are optimally compressible, i.e.\ to $O(N_t)$ many nonvanishing entries.
With the aid of the exponential convergence quadrature schemes 
from \cite{ZankIntegral2023},
can be approximated computationally in $O(N_t)$ work and memory,
while maintaining the asymptotic discretization error bounds. 
Location of these $O(N_t)$ many nonvanishing matrix entries are known
in advance and the compressed temporal Galerkin 
matrix corresponding to $\HT$ can be precomputed in $O(N_t)$ work and memory.
We also obtained an $O(N_t)$ direct factorization by applying a nested
dissection strategy to the compressed wavelet Petrov--Galerkin matrix
resulting from the time-discretization.
We highlight that these findings hold for \emph{any} standard Galerkin discretization
of the spatial differential operator with respect to a single-scale basis. In particular, in space
dimension $n=2,3$, the number of spatial degrees of freedom $N_x$ is considerably
larger than $O(N_t)$, rendering the computational overhead 
for the temporal matrices negligible.

The multiresolution temporal (Petrov--)Galerkin discretization 
is combined via tensorization
with a multilevel finite element Galerkin discretization in the spatial domain $\domain$.
Here, standard, first-order Lagrangian finite element spaces on 
a sequence of nested, regular partitions of the physical domain $\domain$,
with $O(N_x)$ many degrees of freedom can be used; 
in particular, the construction of multiresolution analyses in the 
spatial domain $\Omega$ is not necessary.

We showed that
the nested nature of both, the spatial and the temporal subspaces used
in the Galerkin discretizations allows for straightforward 
realization of \emph{sparse space-time tensor-product} subspaces 
for the discretization of the evolution problem. 
The resulting sparse space-time tensor-product Galerkin discretizations
afford optimal approximation rates in $Q=\domain \times (0,T)$ while 
only using $O(N_x)$ work and memory, i.e.~the complexity of one space-time
solve reduces to work and memory of one multilevel solve of one spatial elliptic problem.

We proposed a new, iterative solver for the very large, linear systems
resulting from the sparse tensor-product discretizations.
It is based on standard, BPX-type multilevel iterative solvers 
combined with the nested dissection $LU$ factorizations
of the evolution operator, and we found that this results in $O(N_x)$ 
complexity solution of the fully discrete, sparse space-time (Petrov--)Galerkin
approximation.

Numerical experiments for first-order discretizations in space and time
confirmed and illustrated the above findings.

We indicate several directions for further work: 
in our consistency and discretization
error bounds, we assumed maximal spatial and temporal regularity of solutions.
While this regularity is available under conditions (such as 
compatible initial- and boundary data,
sufficiently smooth forcing, convex domains), 
the sparse space-time discretization and 
the multilevel solution algorithm adapt straightforwardly to more general situations:
for non-compatible initial data, the solutions are known to exhibit an 
\emph{algebraic temporal singularity as $t\downarrow 0$}, 
and for nonconvex spatial domains $\domain$ solutions will exhibit corner singularities
in space dimension $d=2$ and corner-edge singularities in addition in 
polyhedral $\domain\subset\mathbb{R}^3$, causing reduced convergence rates
of Galerkin discretizations on (quasi-)uniform meshes. 
Remedies restoring the full
convergence rates afforded by Lagrangian finite element methods 
in $\domain$ consist of graded or bisection-tree meshes with corner 
(resp.\ corner-edge) refinement (see, e.g., \cite[Section~5.3]{PSZ23}), 
and by 
local addition of wavelet-coefficients in $(0,T)$ near $t=0$. 
In both cases, extensions of the presently proposed, 
fast elliptic multilevel solution algorithms are available (e.g.\ \cite{BPXgraded}). 
Combining tensorizations of these with the presently proposed, wavelet-based 
time discretization would yield a performance comparable to the algorithms proposed here.

\appendix
\section{Piecewise linear wavelets on the interval}
\label{sec:appendix}
Let $\{\phi_{j,k}\}_{k=0}^{2^j+1}$ denote the standard
piecewise linear hat functions on the interval $[0,1]$ which
is supposed to be subdivided into $2^j$ equidistant subintervals 
of length $2^{-j}$. 
Then, the piecewise linear wavelets on level $j$ 
with zero Dirichlet boundary conditions at $x=0$ are given as follows.
\subsection{Two vanishing moments}
\begin{itemize}
\item Left boundary wavelet: 
\[
\psi_{j,1} = \frac{5}{8}\phi_{j,1}-\frac{3}{4}\phi_{j,2}
-\frac{1}{4}\phi_{j,3}+\frac{1}{4}\phi_{j,4}+\frac{1}{8}\phi_{j,5}
\]
\item 
Stationary wavelets ($k=2,\ldots,2^{j-1}-1$):
\[
\psi_{j,k} = -\frac{1}{8}\phi_{j,2k-3}-\frac{1}{4}\phi_{j,2k-2}+\frac{3}{4}\phi_{j,2k-1}
-\frac{1}{4}\phi_{j,2k}-\frac{1}{8}\phi_{j,2k+1}
\]
\item 
Right boundary wavelet:
\[
  \psi_{j,2^{j-1}} = -\frac{1}{16}\phi_{j,2^j-2}-\frac{1}{8}\phi_{j,2^j-1}
 +\frac{9}{16}\phi_{j,2^j}-\frac{3}{4}\phi_{j,2^j+1}
\]
\end{itemize}

\subsection{Four vanishing moments}
\begin{itemize}
\item
Left boundary wavelets:
\begin{align*}
\psi_{j,1} &= \frac{63}{128}\phi_{j,1}-\frac{65}{64}\phi_{j,2}
-\frac{1}{16}\phi_{j,3}+\frac{57}{64}\phi_{j,4}+\frac{13}{64}\phi_{j,5}\\
&\qquad-\frac{31}{64}\phi_{j,6}-\frac{3}{16}\phi_{j,7}
+\frac{7}{64}\phi_{j,8}+\frac{7}{128}\phi_{j,9}\\
\psi_{j,2} &= -\frac{7}{128}\phi_{j,1}-\frac{7}{64}\phi_{j,2}
+\frac{21}{32}\phi_{j,3}-\frac{37}{64}\phi_{j,4}-\frac{11}{64}\phi_{j,5}\\
&\qquad+\frac{15}{64}\phi_{j,6}+\frac{3}{32}\phi_{j,7}
-\frac{3}{64}\phi_{j,8}-\frac{3}{128}\phi_{j,9}
\end{align*}
Stationary wavelets ($k=3,\ldots,2^{j-1}-2$):
\begin{align*}
\psi_{j,k} &= \frac{3}{128}\phi_{j,2k-5}+\frac{3}{64}\phi_{j,2k-4}
-\frac{1}{8}\phi_{j,2k-3}-\frac{19}{64}\phi_{j,2k-2}
+\frac{45}{64}\phi_{j,2k-1}\\
&\qquad-\frac{19}{64}\phi_{j,2k}-\frac{1}{8}\phi_{j,2k+1}
+\frac{3}{64}\phi_{j,2k+2}+\frac{3}{128}\phi_{j,2k+3}
\end{align*}
\item
Right boundary wavelets:
\begin{align*}
  \psi_{j,2^{j-1}-1} &= \frac{9}{512}\phi_{j,2^j-6}+\frac{9}{256}\phi_{j,2^j-5}
  -\frac{53}{512}\phi_{j,2^j-4}-\frac{31}{128}\phi_{j,2^j-3}\\
  &\qquad+\frac{345}{512}\phi_{j,2^j-2}-\frac{105}{256}\phi_{j,2^j-1}
 -\frac{45}{512}\phi_{j,2^j}+\frac{15}{64}\phi_{j,2^j+1}\\
  \psi_{j,2^{j-1}} &= -\frac{5}{512}\phi_{j,2^j-6}-\frac{5}{256}\phi_{j,2^j-5}
  +\frac{67}{1536}\phi_{j,2^j-4}+\frac{41}{384}\phi_{j,2^j-3}\\
  &\qquad-\frac{53}{512}\phi_{j,2^j-2}-\frac{241}{768}\phi_{j,2^j-1}
 +\frac{875}{1536}\phi_{j,2^j}-\frac{35}{64}\phi_{j,2^j+1}
\end{align*} 
\end{itemize}

\section*{Acknowledgments}
\noindent
M.~Zank has been partially funded by the Austrian Science Fund (FWF) through
project P~33477. Part of the work was done when M.~Zank was a NAWI Graz
PostDoc Fellow at the Institute of Applied Mathematics, TU Graz.
M.~Zank acknowledges NAWI Graz for the financial support.

\bibliographystyle{amsplain}
\bibliography{lit.bib}{}

\providecommand{\bysame}{\leavevmode\hbox to3em{\hrulefill}\thinspace}
\providecommand{\MR}{\relax\ifhmode\unskip\space\fi MR }
\providecommand{\MRhref}[2]{%
  \href{http://www.ams.org/mathscinet-getitem?mr=#1}{#2}
}
\providecommand{\href}[2]{#2}
\begin{thebibliography}{10}

\bibitem{Andreev2013}
R.~Andreev, \emph{Stability of sparse space-time finite element discretizations
  of linear parabolic evolution equations}, IMA J. Numer. Anal. \textbf{33}
  (2013), no.~1, 242--260. \MR{3020957}

\bibitem{MR3449910}
\bysame, \emph{Wavelet-in-time multigrid-in-space preconditioning of parabolic
  evolution equations}, SIAM J. Sci. Comput. \textbf{38} (2016), no.~1,
  A216--A242.

\bibitem{BMPS21}
P.~Bansal, A.~Moiola, I.~Perugia, and C.~Schwab, \emph{Space-time discontinuous
  {G}alerkin approximation of acoustic waves with point singularities}, IMA J.
  Numer. Anal. \textbf{41} (2021), no.~3, 2056--2109.

\bibitem{BPX}
J.~Bramble, J.~Pasciak, and J.~Xu, \emph{{P}arallel multilevel
  preconditioners}, Math. Comput. \textbf{55} (1990), 1--22.

\bibitem{Bungartz.Griebel:2004}
H.-J. Bungartz and M.~Griebel, \emph{Sparse grids}, Acta Numer. \textbf{13}
  (2004), 1--123.

\bibitem{BPXgraded}
L.~Chen, R.H. Nochetto, and J.~Xu, \emph{Optimal multilevel methods for graded
  bisection grids}, Numer. Math. \textbf{120} (2012), no.~1, 1--34.

\bibitem{CDF}
A.~Cohen, I.~Daubechies, and J.-C. Feauveau, \emph{{B}iorthogonal bases of
  compactly supported wavelets}, Pure Appl. Math. \textbf{45} (1992), 485--560.

\bibitem{DA1}
W.~Dahmen, \emph{{W}avelet and multiscale methods for operator equations}, Acta
  Numer. \textbf{6} (1997), 55--228.

\bibitem{DHS1}
W.~Dahmen, H.~Harbrecht, and R.~Schneider, \emph{Compression techniques for
  boundary integral equations --- asymptotically optimal complexity estimates},
  SIAM J. Numer. Anal. \textbf{43} (2006), no.~6, 2251--2271.

\bibitem{DK92}
W.~Dahmen and A.~Kunoth, \emph{Multilevel preconditioning}, Numer. Math.
  \textbf{63} (1992), no.~3, 315--344.

\bibitem{DKU}
W.~Dahmen, A.~Kunoth, and K.~Urban, \emph{{B}iorthogonal spline-wavelets on the
  interval -- stability and moment conditions}, Appl. Comp. Harm. Anal.
  \textbf{6} (1999), 259--302.

\bibitem{DPS4}
W.~Dahmen, S.~{Pr\"{o}{\ss}dorf}, and R.~Schneider, \emph{Multiscale methods
  for pseudo\-differential equations on smooth manifolds}, Proceedings of the
  International Conference on Wavelets: Theory, Algorithms, and Applications
  (C.K. Chui, L.~Montefusco, and L.~Puccio, eds.), 1995, pp.~385--424.

\bibitem{DS}
W.~Dahmen and R.~Schneider, \emph{{W}avelets with complementary boundary
  conditions. function spaces on the cube}, Results Math. \textbf{34} (1998),
  255–293.

\bibitem{DL92}
R.~Dautray and J.-L. Lions, \emph{Mathematical analysis and numerical methods
  for science and technology. {V}ol. 5}, Springer-Verlag, Berlin, 1992.

\bibitem{GGRSt2021}
Gregor Gantner and Rob Stevenson, \emph{Further results on a space-time {FOSLS}
  formulation of parabolic {PDE}s}, ESAIM Math. Model. Numer. Anal. \textbf{55}
  (2021), no.~1, 283--299. \MR{4216839}

\bibitem{Geo73}
A.~George, \emph{Nested dissection of a regular finite element mesh}, SIAM J.
  Numer. Anal. \textbf{2} (1973), no.~10, 345--363.

\bibitem{GH13}
M.~Griebel and H.~Harbrecht, \emph{On the construction of sparse tensor product
  spaces}, Math. Comput. \textbf{82} (2013), no.~282, 975–994.

\bibitem{Griebel95}
M.~Griebel and P.~Oswald, \emph{On the abstract theory of additive and
  multiplicative {S}chwarz algorithms}, Numer. Math. \textbf{70} (1995), no.~2,
  163--180.

\bibitem{Gunzburger}
M.D. Gunzburger and A.~Kunoth, \emph{Space-time adaptive wavelet methods for
  control problems constrained by parabolic evolution equations}, SIAM J.
  Contr. Optim. \textbf{49} (2011), no.~3, 1150--1170.

\bibitem{Hack1}
W.~Hackbusch, \emph{A sparse matrix arithmetic based on
  {$\mathcal{H}$}-matrices. {P}art {I}: {I}ntroduction to
  {$\mathcal{H}$}-matrices}, Computing \textbf{62} (1999), no.~2, 89--108.

\bibitem{Hack2}
W.~Hackbusch and B.N. Khoromskij, \emph{A sparse {$\mathcal{H}$}-matrix
  arithmetic. {G}eneral complexity estimates}, J. Comput. Appl. Math.
  \textbf{125} (2000), no.~1–2, 479--501.

\bibitem{HM21}
H.~Harbrecht and M.D. Multerer, \emph{A fast direct solver for nonlocal
  operators in wavelet coordinates}, J. Comput. Phys. \textbf{428} (2021),
  110056.

\bibitem{HSS08}
H.~Harbrecht, R.~Schneider, and C.~Schwab, \emph{Multilevel frames for sparse
  tensor product spaces}, Numer. Math. \textbf{110} (2008), 199--220.

\bibitem{HauserZankMaxwell2024}
J.I.M. Hauser and M.~Zank, \emph{Numerical study of conforming space-time
  methods for {M}axwell's equations}, Numer. Methods Partial Differ. Equ.
  \textbf{40} (2024), no.~2, e23070.

\bibitem{Jaff}
S.\ Jaffard, \emph{{W}avelet methods for fast resolution of elliptic
  equations}, SIAM J.\ Numer.\ Anal. \textbf{29} (1992), 965--986.

\bibitem{LZ21}
U.~Langer and M.~Zank, \emph{Efficient direct space-time finite element solvers
  for parabolic initial-boundary value problems in anisotropic sobolev spaces},
  SIAM J. Sci. Comput. \textbf{4} (2021), no.~43, A2714--A2736.

\bibitem{LM1}
J.-L. Lions and E.~Magenes, \emph{Non-homogeneous boundary value problems and
  applications. {V}olume {I}.}, Die Grundlehren der mathematischen
  Wissenschaften in Einzeldarstellungen, vol. 181, Springer, New
  York-Heidelberg, 1972.

\bibitem{LRT79}
R.J. Lipton, D.J. Rose, and R.E. Tarjan, \emph{{G}eneralized nested
  dissection}, SIAM J. Numer. Anal. \textbf{2} (1979), no.~16, 346--358.

\bibitem{LoescherSteinbachZankHT2024}
R.~L\"oscher, O.~Steinbach, and M.~Zank, \emph{On a modified {H}ilbert
  transformation, the discrete inf-sup condition, and error estimates}, Comput.
  Math. Appl. \textbf{171} (2024), 114--138.

\bibitem{PSZ23}
I.~Perugia, C.~Schwab, and M.~Zank, \emph{Exponential convergence of
  {$hp$}-time-stepping in space-time discretizations of parabolic {PDE}s},
  ESAIM Math. Model. Numer. Anal. \textbf{57} (2023), no.~1, 29--67.

\bibitem{Ryan2002}
R.A. Ryan, \emph{Introduction to tensor products of {B}anach spaces}, Springer
  Monographs in Mathematics, Springer-Verlag London, Ltd., London, 2002.

\bibitem{SS86}
Y.\ Saad and M.H.\ Schultz, \emph{{GMRES}: A generalized minimal residual
  algorithm for solving nonsymmetric linear systems}, SIAM J.\ Sci.\ Statist.\
  Comput. \textbf{7} (1986), no.~3, 856--869.

\bibitem{SchneiderBuch1998}
R.~Schneider, \emph{Multiskalen- und {W}avelet-{M}atrixkompression}, Advances
  in Numerical Mathematics, B.G. Teubner, Stuttgart, 1998.

\bibitem{SS2000}
D.~Sch\"{o}tzau and C.~Schwab, \emph{Time discretization of parabolic problems
  by the {$hp$}-version of the discontinuous {G}alerkin finite element method},
  SIAM J. Numer. Anal. \textbf{38} (2000), no.~3, 837--875.

\bibitem{ScStxtWav}
C.~Schwab and R.~Stevenson, \emph{Space-time adaptive wavelet methods for
  parabolic evolution problems}, Math. Comput. \textbf{78} (2009), no.~267,
  1293--1318.

\bibitem{ScStxtNSE}
\bysame, \emph{Fractional space-time variational formulations of ({N}avier-)
  {S}tokes equations}, SIAM J. Math. Anal. \textbf{49} (2017), no.~4,
  2442--2467.

\bibitem{SteinbachZankETNA2020}
O.~Steinbach and M.~Zank, \emph{Coercive space-time finite element methods for
  initial boundary value problems}, Electron. Trans. Numer. Anal. \textbf{52}
  (2020), 154--194.

\bibitem{SteinbachZankJNUM2021}
\bysame, \emph{A note on the efficient evaluation of a modified {H}ilbert
  transformation}, J. Numer. Math. \textbf{29} (2021), no.~1, 47--61.

\bibitem{RSt2003}
R.~Stevenson, \emph{On the compressibility operators in wavelet coordinates},
  SIAM J. Math. Anal. \textbf{35} (2004), no.~5, 1110--1132.

\bibitem{RStvWJW22}
R.~Stevenson, R.~van Veneti\"{e}, and J.~Westerdiep, \emph{A wavelet-in-time,
  finite element-in-space adaptive method for parabolic evolution equations},
  Adv. Comput. Math. \textbf{48} (2022), no.~3, Paper No. 17, 43.

\bibitem{Thomee2nd}
V.~Thom\'{e}e, \emph{Galerkin finite element methods for parabolic problems},
  2nd ed., Springer Series in Computational Mathematics, vol.~25, Springer,
  Berlin, 2006.

\bibitem{ZankDissBuch2020}
M.~Zank, \emph{Inf-sup stable space-time methods for time-dependent partial
  differential equations}, Monographic Series TU Graz: Computation in
  Engineering and Science, vol.~36, TU Graz, Austria, 2020.

\bibitem{ZankExact2021}
\bysame, \emph{An exact realization of a modified {H}ilbert transformation for
  space-time methods for parabolic evolution equations}, Comput. Methods Appl.
  Math. \textbf{21} (2021), no.~2, 479--496.

\bibitem{ZankIntegral2023}
\bysame, \emph{Integral representations and quadrature schemes for the modified
  {H}ilbert transformation}, Comput. Methods Appl. Math. \textbf{23} (2023),
  no.~2, 473--489.

\bibitem{Zeiser:2011}
A.~Zeiser, \emph{Fast matrix-vector multiplication in the sparse-grid
  {G}alerkin method}, J. Sci. Comput. \textbf{47} (2011), no.~3, 328--346.

\end{thebibliography}

\end{document}